\documentclass[a4paper, 11pt]{amsart}
\usepackage{a4wide}
\usepackage{latexsym, fullpage}
\usepackage[left=2cm,top=3cm,right=2cm]{geometry}
\usepackage[utf8]{inputenc}
\usepackage{amsmath}
\usepackage{amsfonts}
\usepackage{amsthm}
\usepackage{amssymb}
\usepackage{amscd}
\usepackage[all]{xy}
\usepackage{fancyhdr}
\usepackage{enumerate}
\usepackage{accents}
\usepackage{setspace}
\usepackage{xcolor}
\usepackage{upgreek}
\usepackage{verbatim}
\usepackage[mathscr]{eucal}
\usepackage{mathrsfs}
\usepackage[numbers]{natbib}
\usepackage[fit]{truncate}
\usepackage{graphicx}
\usepackage{epsfig}
\usepackage{soul}



\usepackage{tabularx,ragged2e,booktabs,caption}
\newcolumntype{C}[1]{>{\Centering}m{#1}}

\numberwithin{equation}{section}


\theoremstyle{plain}
\newtheorem{theorem}{Theorem}[section]
\newtheorem{corollary}[theorem]{Corollary}
\newtheorem{lemma}[theorem]{Lemma}
\newtheorem{proposition}[theorem]{Proposition}

\newtheorem{remark}[theorem]{Remark}

\theoremstyle{definition}
\newtheorem{definition}[theorem]{Definition}
\newtheorem{example}[theorem]{Example}

\theoremstyle{remark}


\newcommand{\R}{{\mathbf R}}

\newcommand{\NN}{\mathbb{N}}

\newcommand\dom{{\mathop{\rm dom}}}

\newcommand{\Gzero}{{\rm (G0)}}
\newcommand{\Gone}{{\rm (G1)}}
\newcommand{\Gtwo}{{\rm (G2)}}
\newcommand{\Gthree}{{\rm (G3)}}
\newcommand{\Gfour}{{\rm (G4)}}
\newcommand{\Gfive}{{\rm (G5)}}
\newcommand{\Gsix}{{\rm (G6)}}
\newcommand{\Gseven}{{\rm (G7)}}
\newcommand{\Geight}{\rm (G8)}
\newcommand{\nul}{{\emptyset}}
\newcommand{\Lip}{\operatornamewithlimits{Lip}}

\newcommand{\yG}{ y_G}

\allowdisplaybreaks



\title{On concavity of the monopolist's problem 
facing consumers with nonlinear price preferences$^*$ 
}
\thanks{$^*$The authors are grateful to Guillaume Carlier, Ivar Ekeland, Alfred Galichon,
 and Xianwen Shi for stimulating conversations,
and to Georg N\" oldeke and Larry Samuelson  for sharing their work with us in preprint form.
The first author's research was supported in part by NSERC grant 217006-08 and -15, 
by a Simons Foundation Fellowship, and by a two-week residency at the Institut Mittag-Leffler.
This project was initiated during the Fall of 2013 when both authors were in residence at the Mathematical Sciences Reseach Institute in Berkeley CA, under a program supported by National Science Foundation Grant No. 0932078 000, and progressed at the Fall 2014 program of the Fields Institute for the Mathematical
Sciences.
\copyright \today}

\author[1]{Robert J. McCann$^\dagger$}\thanks{$^\dagger$Department of Mathematics, University of Toronto, Toronto, Ontario, Canada, M5S 2E4 {\tt mccann@math.toronto.edu}}
\author[2]{Kelvin Shuangjian Zhang$^\ddagger$}\thanks{$^\ddagger$Department of Mathematics, University of Toronto, Toronto, Ontario, Canada, M5S 2E4 {\tt szhang@math.toronto.edu}}
%




\begin{document}

\begin{abstract}
	A monopolist wishes to maximize her profits by finding an optimal price policy. After she announces a menu of products and prices, each agent $x$ will choose to buy that product $y(x)$ which maximizes his own utility, if positive. 
The principal's profits are the sum of the net earnings produced by each product sold.  
These are determined by the costs
of production and the distribution of products sold, which in turn are based on the distribution of anonymous agents and
the choices they make in response to the principal's price menu.  In this paper, we provide a necessary and sufficient
condition for the convexity or concavity of the principal's (bilevel) optimization problem,  assuming each agent's disutility is a strictly increasing but not
necessarily affine (i.e.\ quasilinear) function of the price paid. Concavity when present,  makes the problem more amenable to 
computational and theoretical analysis;  it is key to obtaining uniqueness and stability results for the principal's strategy in particular.  Even in the quasilinear case, our analysis goes beyond previous work by addressing convexity as well as concavity,  by establishing 
conditions which are not only sufficient but necessary,  and by requiring fewer hypotheses
on the agents' preferences.
\end{abstract}
\bigskip
\maketitle

\section{Introduction}

As one of the central problems in microeconomic theory, the {\em principal-agent framework} characterizes the type of non-competitive decision-making problems which involve aligning incentives so that one set of parties (the agents) finds it beneficial to act in the interests of another (the principal) despite holding private information.
It arises in a variety of different contexts. Besides nonlinear pricing \cite{Armstrong96,MussaRosen78,Spence80,Wilson93}, economists also use this framework to model many different types of transactions, including tax policy \cite{GuesnerieLaffont78,Mirrlees71,Rochet85}, contract theory \cite{QuinziiRochet85}, regulation of monopolies \cite{BaronMyerson82}, product line design \cite{RochetChone98}, labour market signaling \cite{Spence74}, public utilities \cite{Roberts79}, and mechanism design \cite{MaskinRiley84, Mirrlees71, MonteiroPage98, Myerson81, Vohra11}. Many of these share the same mathematical model. In this paper, we use nonlinear pricing to motivate the discussion,  in spite of the fact that our conclusions
may be equally pertinent to many other areas of application.\\

Consider the problem of a monopolist who wants to maximize her profits by selecting the dependence of the price $v(y)$ on each type  $y \in cl(Y)$ of product sold. An agent of type $x \in X$ will choose to buy that product which maximizes his benefit 

\begin{equation}\label{1}
u (x) := \max_{y \in cl(Y)} G(x, y, v(y))
\end{equation}
where $(x, y, z)\in X \times cl(Y)\times \R \longmapsto G(x,y,z) \in \R$, is the given direct utility function for agent type $x$ to choose product type $y$ at price $z$, and $X,Y$ are open and bounded subsets in $\R^m$ and $\R^n$ ($m \ge n$), respectively,
with closures $cl(X)$ and $cl(Y)$.
 After agents, whose distribution $d \mu(x)$ is known to the monopolist, have chosen their favorite items to buy, the monopolist calculates her profit to be given by the functional 
\begin{equation}\label{profit}
	\Pi(v, y):=\int_{X} \pi(x,y(x),v(y(x))) d\mu(x),
\end{equation}
 where $y(x)$ denotes the product type $y$ which agent type $x$ chooses to buy
(and which maximizes \eqref{1}), $v(y(x))$ denotes the selling price of type $y(x)$ and $\pi \in C^0(cl(X\times Y)\times \R)$ denotes the principal's net profit of selling product type $y \in cl(Y)$ to agent type $x \in X$ at price $z \in \R$. The monopolist wants to maximize her net profit among all  lower semicontinuous pricing policies. Individual agents accept to contract only if the benefits they earn are no less than their outside option; we model this by assuming
the existence of a distinguished point $y_\nul \in cl(Y)$ which represents the outside option,
and whose price  cannot exceed some fixed value $z_\nul \in \R$ beyond the monopolist's control.
This removes any incentive for the monopolist to raise the prices of other options too high.
(We can choose normalizations such as $\pi(x,y_\nul,z_\nul)=0=G(x,y_\nul,z_\nul)$ and $(y_\nul,z_\nul)=(0,0)$, or not, as we wish.)
\\

The following is a table of notation:\\
\\
\begin{minipage}{\linewidth}
	\captionof{table}{Notation} \label{tab:title} 
	\begin{tabular}{ C{1.12in} C{.1in} *4{C{1.15in}} }\toprule[1.5pt]
	 Mathematical Expression & \text{Economic Meaning} & & & & \\ 
	 \midrule[0.5pt]
	 $x$  & \text{agent type}\\
	 $y$  & \text{product type}\\
	 $X \subset \R^m$  & \text{(open, bounded) domain of agent types} \\
	 $cl(Y) \subset \R^n$  & \text{domain of product types, closure of $Y$} \\
	 $v(y)$  & \text{selling price of product type $y$}\\
	$v(y_\nul) \le z_\nul$ & \text{price normalization of the outside option $y_\nul \in cl(Y)$}\\ 
	 $u(x)$  & \text{indirect utility of agent type $x$}\\
         $\dom Du$ & \text{points in $X$ where $u$ is differentiable} \\ 
	 $G(x,y,z)$ & \text{direct utility of buying product $y$ at price $z$ for agent $x$ }\\
         $H(x,y,u)$ & \text{price at which $y$ brings $x$ utility $u$, so that $H(x,y,G(x,y,z))=z$} \\
	 $\pi(x,y,z)$  & \text{the principal's profit for selling product $y$ to agent $x$ at price $z$ }\\
	 $d\mu(x)$ & \text{Borel probability measure giving the distribution of agent types on $X$}\\
          $\mu  \ll {\mathcal L}^m$ & \text{$\mu$ vanishes on each subset of $\R^m$ having zero Lebesgue volume ${\mathcal L}^m$} \\
	 $\Pi(v,y)$ & \text{monopolist's profit facing agents' responses $y(\cdot)$ to her chosen price policy $v(\cdot)$} \\	${\pmb\Pi}(u)$ & \text{monopolist's profit, viewed instead as a function of agents' indirect utilities $u(\cdot)$ } \\

		\bottomrule[1.25pt]
		\end {tabular}\par
		\bigskip
	\end{minipage}
	
	\bigskip
%
%

 For the quasilinear case, where the utility $G(x,y,z)$ depends linearly on its third variable, and net profit $\pi(x,y,z)=z-a(y)$ represents difference of selling price $z$ and manufacturing cost $a$ of product type $y$, theories of existence \cite{Basov05,RochetStole03,Carlier01,MonteiroPage98}, uniqueness 
\cite{CarlierLachand-Robert01,FigalliKimMcCann11,MussaRosen78,RochetChone98} 
and robustness \cite{Basov05,FigalliKimMcCann11} have been well studied. The equivalence of concavity to the corresponding non-negative cross-curvature condition revealed by Figalli-Kim-McCann \cite{FigalliKimMcCann11} directly inspires our work. In addition to the quaslinearity of
$G(x,y,z) = b(x,y) - z$ essential to their model,  they require additional restrictions such as $m=n$ and $b \in C^4(cl(X\times Y))$ which are not economically motivated
and which we shall relax or remove.  Moreover,  our results allow for the monopolist's profit $\pi$ to depend in a general
way both on monetary transfers and on the agents' types $x$,  revealed after contracting.  Such dependence plays an important role in applications such as insurance marketing.
\\
 
 For a particular case, where $ cl(X) = cl(Y) = [0,\infty)^n$, $G(x,y,z) = \langle x,y \rangle -z$,  $a(y) =  
\langle y,y\rangle /2$, and $(y_\nul,z_\nul)=(0,0)$, Rochet and Chon$\acute{e}$  \cite{RochetChone98} not only obtained existence results but also partially characterized optimal solutions and interpreted them economically. Here $\langle\ ,\ \rangle$ denotes the Euclidean inner product.\\
 
 More generally, Carlier \cite{Carlier01} proved existence for the general quasilinear utility, where $X$ is a bounded open convex subset in $\R^m$ with $C^1$ boundary, $cl(Y)= [0,\infty)^n$, $G(x,y,z) = b(x,y)- z$ 
 and the manufacturing cost $a$ is linear; see also Monteiro and Page \cite{MonteiroPage98}. \\
 
The generalization of quasilinear to nonlinear preferences has many potential applications
. For example, the benefit function $G(x,y,v(y))=b(x,y)-v^2(y)$ models agents who are more sensitive to high prices, while $G(x,y,v(y))=b(x,y)-v^{\frac{1}{2}}(y)$ models agents who are less sensitive to high prices. 
Very few results are known for such nonlinearities,  due to the complications which they entail.
However,  we shall eventually show that under certain conditions the concavity or convexity of $G$ and $\pi$ (or their derivatives)
with respect to $v$ tends to be reflected by concavity or convexity of $\Pi$, not with respect to 
$v$ or $y$,  but rather with respect to the agents indirect utility $u$, in terms of 
which the principal's maximization is reformulated below. \\

The generalized existence problem was mentioned as a conjecture by Basov \cite[Chapter 8]{Basov05}. 
Recently, N\"oldeke and Samuelson \cite{NoldekeSamuelson15p} provided a general existence result for $cl(X)$, $cl(Y)$ being compact and the utility $G$ being decreasing with respect to its third variable, by implementing a duality argument based on Galois Connections.
 Initially independently of \cite{NoldekeSamuelson15p}, Zhang \cite{Zhang} found another general existence result for the principal-agent framework where the utility function is decreasing with respect to its third variable but on potentially unbounded domains.  In Zhang's work, $G$-convexity plays a crucial role. 
\\
 
In 2013, Trudinger's lecture at the optimal transport program at MSRI inspired us to try generalizing Figalli-Kim-McCann \cite{FigalliKimMcCann11} to the non-quasilinear case. With the tools developed by Trudinger \cite{Trudinger14} and others  \cite{Balder77,Singer97}, we are able to provide convexity and concavity theorems for general utility and net profit functions. For an application of $G$-convexity to geometric optics, see \cite{GuillenKitagawaCPAM}.\\

Section 2 lists most of hypotheses we need in this paper. Section 3 introduces $G$-convexity, $G$-subdifferentiability, and various equivalent forms of the principal's program. Section 4 presents a variety of necessary and sufficient conditions for concavity (and convexity) of the principal's problem, and the resulting uniqueness of her optimal
strategy.  Several examples are developed, based on an analytic criterion for concavity, whose proof is deferred to section 5. 
Appendix A gives a differential criterion for the crucial hypotheses --- \Gthree\ of the next section --- clarifying its relation to that of Figalli, Kim \& McCann \cite{FigalliKimMcCann11}, and the Ma-Trudinger-Wang criteria for regularity of optimal maps \cite{MaTrudingerWang05} which inspired it.
 In appendix B,  we assume the monopolist's utility does not depend on the agent's private information, 
which in certain circumstances allows us to provide a necessary and sufficient condition for concavity of her profit functional.

\bigskip

\section{Hypotheses}

For notational convenience, we adopt the following technical hypotheses, inspired by those of 
Trudinger~\cite{Trudinger14} and Figalli-Kim-McCann \cite{FigalliKimMcCann11}. Here we use $G_x=\big(\frac{\partial G}{\partial x^1}, \frac{\partial G}{\partial x^2}, ..., \frac{\partial G}{\partial x^m}\big)$, $G_y=\big(\frac{\partial G}{\partial y^1}, \frac{\partial G}{\partial y^2}, ..., \frac{\partial G}{\partial y^n}\big)$, $G_z = \frac{\partial G}{\partial z}$ to denote derivatives with respect to $x\in X\subset \R^m$ , $y \in Y \subset \R^n$, and $z\in \R$, respectively. Also, for second partial derivatives, we adopt following notation
\begin{flalign*}
	G_{x,y}=
	\begin{bmatrix}
	\frac{\partial^2 G}{\partial x^1 \partial y^1} & \frac{\partial^2 G}{\partial x^1 \partial y^2} & ... & \frac{\partial^2 G}{\partial x^1 \partial y^n}  \\
	\frac{\partial^2 G}{\partial x^2 \partial y^1} & \frac{\partial^2 G}{\partial x^2 \partial y^2} & ... & \frac{\partial^2 G}{\partial x^2 \partial y^n}  \\	
	\vdots & \vdots & \ddots & \vdots \\
	\frac{\partial^2 G}{\partial x^m \partial y^1} & \frac{\partial^2 G}{\partial x^m \partial y^2} & ... & \frac{\partial^2 G}{\partial x^m \partial y^n}  
	\end{bmatrix},
\end{flalign*}
$G_{x,z}=\big(\frac{\partial^2 G}{\partial x^1 \partial z}, \frac{\partial^2 G}{\partial x^2 \partial z}, ..., \frac{\partial^2 G}{\partial x^m \partial z}\big)$.\\ 

We say $G \in C^{1}(cl(X\times Y \times Z))$, if all the partial derivatives $\frac{\partial G}{\partial x^1}$, ...,$\frac{\partial G}{\partial x^m}$, $\frac{\partial G}{\partial y^1}$, ..., $\frac{\partial G}{\partial y^n}$, $\frac{\partial G}{\partial z}$ exist and are continuous. Also, we say $G \in C^{2}(cl(X\times Y \times Z))$, if all the partial derivatives up to second order( i.e. $\frac{\partial^2 G}{\partial \alpha \partial \beta}$, where $\alpha, \beta = x^1, ... , x^m, y^1, ..., y^n, z$) exist and are continuous. Any bijective continuous function whose inverse is also continuous, is called a homeomorphism (a.k.a.\ bicontinuous).\\

The following hypotheses will be relevant:  \Gone--\Gthree\ represent partial analogs of the twist, domain convexity,
and non-negative cross-curvature hypotheses from the quasilinear setting \cite{FigalliKimMcCann11} \cite{Loeper09};
\Gfour\ encodes a form of the desirability of money to each agent, while \Gfive\ quantifies the assertion that the maximum
price $\bar z$ is high enough that no agent prefers paying it for any product $y$ to the outside option.

\begin{itemize}
	\item[\Gzero] $G \in C^{1}(cl(X\times Y \times Z))$, where $X\subset \R^m, Y \subset \R^n$ are open and bounded and $Z=(\underbar z,\bar z)$ with $-\infty <\underbar z < \bar z \le +\infty$.\\

	\item[\Gone] For each $x \in X$, the map $(y,z) \in cl( Y \times Z) \longmapsto (G_x, G)(x,y,z)$ is a homeomorphism onto its range;\\
	\item[\Gtwo] its range $(cl( Y \times Z))_x := (G_x,G)(x,cl(Y \times Z)) \subset \R^{m+1}$ is convex.\\
	\item[] For each 
	$x_0\in X$ and $(y_0, z_0),(y_1,z_1) \in cl( Y \times Z)$, 
	define $(y_t, z_t)\in cl( Y \times Z)$ such that the following equation holds:
	\begin{equation}\label{$G$-segment}
	(G_x, G)(x_0,y_t,z_t) = (1-t)(G_x, G)(x_0,y_0,z_0)+t (G_x, G)(x_0,y_1,z_1), \text{ for each $t\in [0,1].$}
	\end{equation}
	By \Gone \ and \Gtwo , $(y_t, z_t)$ is uniquely determined by (\ref{$G$-segment}). 
	We call $t \in [0,1] \longmapsto (x_0,y_t,z_t)$ the  $G$-segment connecting $(x_0, y_0, z_0)$ and $(x_0, y_1, z_1)$.\\
	\item[\Gthree] For each $x,x_0 \in X$, assume $t \in [0,1] \longmapsto G(x, y_t, z_t)$ is convex along all $G$-segments ($\ref{$G$-segment}$).\\
	\item[\Gfour]  For each $(x,y,z) \in X \times cl(Y)\times cl(Z)$, assume $G_{z}(x,y,z)<0$.  \\ 
\item[\Gfive] $\pi\in C^0(cl(X\times Y \times Z))$ and $u_\nul (x) := G(x,y_\nul,z_\nul)$ for some fixed
$(y_\nul,z_\nul) \in cl(Y \times Z)$  satisfying
$$G(x,y,\bar z)  := \lim_{z \to \bar z}G(x,y,z) \le G(x,y_\nul,z_\nul) \text{ for all}\ (x,y) \in X \times cl (Y).$$
When $\bar z = +\infty$ assume this inequality is strict, and moreover that $z$ sufficiently large implies
$$G(x,y,z) < G(x,y_\nul,z_\nul) \text{ for all}\ (x,y) \in X \times cl(Y).$$

\end{itemize}

For each $u \in \R$, \Gfour \ allows us to define $H(x,y,u) := z$ 
if  $G(x,y,z) = u$, i.e. $H(x, y, \cdot)= G^{-1}(x,y,\cdot)$.

\bigskip

\section{Principal's program, $G$-convexity, $G$-subdifferentiability}\label{setting}


In this section, we will introduce the principal's program and reformulate it in the language of $G$-convexity and $G$-subdifferentiability. The purpose of this section is to fix terminology and prepare the preliminaries 
for the main results of the next section.

In economic models, 
incentive compatibility is  needed to ensure that all the agents report their preferences truthfully. 
{ According to the revelation principle, this costs no generality.
Decisions made by monopolist according to the information collected from agents then lead} 
to the expected market reaction (as in \cite{Carlier01,RochetChone98}).  
 Individual rationality
is required to ensure full participation, so that each agent will choose to play,  possibly by accepting the outside option.

\begin{definition}[Incentive compatible and individually rational]
A measurable map $x \in X \longmapsto (y(x),z(x)) \in cl(Y \times Z)
$ of agents to (product, price) pairs
is called {\em incentive compatible} if and only if $G(x,y(x),z(x)) \ge G(x, y(x'), z(x'))$ for all $(x,x')\in X$.
Such a map offers agent $x$ no incentive to pretend to be $x'$.
 It is called {\em individually rational} if and only if $G(x,y(x),z(x)) \ge G(x,y_\nul,z_\nul)$ for all $x \in X$,
meaning no individual $x$ strictly prefers the outside option to his assignment $(y(x),z(x))$.
\end{definition}

Given $\pi$ and $(y_\nul,z_\nul)$ from \Gfive, the principal's program can be described as follows:

\begin{equation}
(P_0)
\begin{cases}
\sup \Pi(v,y)=\int_{X} \pi(x, y(x), v(y(x))) d\mu(x)\quad \text{among}\\ 
x \in X \longmapsto (y(x),v(y(x))) \text{  incentive compatible,  individually rational,}\\ 
{ \text{and}\ v:cl(Y)\longrightarrow cl (Z)\  \text{lower semicontinuous with}\ v(y_\nul) \le z_\nul}. \\
\end{cases}
\end{equation}

In this section we'll also see that incentive compatibility is conveniently encoded via the {\em $G$-convexity}
defined below of the agents' indirect utility $u$.  We'll then reformulate the 
principal's program using $u$ as a proxy for the prices $v$ controlled
by the principal,  thus generalizing Carlier's approach \cite{Carlier01} to the non-quasilinear setting. Moreover, the agent's indirect utility $u$ and product selling price $v$ are $G$-dual to each other in the sense of \cite{Trudinger14}.

\begin{definition}[$G$-convexity]\label{$G$-convexity}
A function $u\in C^0(X)$ is called $G$-convex if for each $x_0 \in X$, there exists $y_0 \in cl(Y)$, and $z_0 \in cl(Z)$ such that $u(x_0)=G(x_0, y_0, z_0)$, and $u(x)\ge G(x, y_0, z_0),\mbox{ for all } x\in X$.
\end{definition}


From the definition, we know if $u$ is a $G$-convex function, for any $x \in X$ where $u$ happens
to be differentiable,  denoted $x\in \dom Du$, there exists $y\in cl(Y)$ and $z\in cl(Z)$ such that
\begin{equation}\label{EqnInverse}
u(x)= G(x, y, z),\ \ \   Du(x) = D_x G(x, y, z).
\end{equation}
Conversely, when (\ref{EqnInverse}) holds, one can identify $(y, z) \in cl( Y \times Z)$ in terms of $u(x)$ and $Du(x)$, according to Condition \Gone. We denote it as $\bar{y}_G (x,u(x),Du(x)) := (y_G, z_G)(x,u(x),Du(x))$, 
and drop the subscript
$G$ when it is clear from context. 
{Under our hypotheses, $\bar y_G$ is a continuous function, at least in the region where $u \ge u_\emptyset$.}
It will often prove convenient to augment the types $x$ and $y$ with an extra real variable;
here and later we use the notation $\bar x \in \R^{m+1}$ and $\bar y \in \R^{n+1}$ to signify this augmentation.
In addition, the set $X\setminus \dom Du$ has Lebesgue measure zero, which will be shown in the proof of Theorem \ref{Thm:Existence}.

While $G$-convexity acts as a generalized notion of convexity, the $G$-subdifferentiability defined below generalizes the concept of differentiability/subdifferentiability.

\begin{definition}[$G$-subdifferentiability]
	The $G$-subdifferential of a 
	function $u(x)$ is defined by
	\begin{equation*}
	\partial^G u(x):= \{ y\in cl(Y)\mid u(x')\ge G(x',y, H(x,y,u(x))), \mbox{ for all } x'\in X\}.
	\end{equation*}
	
	A function $u$ is said to be $G$-subdifferentiable at $x$ if and only if $\partial^G u(x) \neq \emptyset$.
\end{definition}

In \cite{Trudinger14}, this point-to-set map $\partial^G u$ is also called $G$-$normal$ mapping. For more properties 
related to $G$-convexity, see \cite{Trudinger14}.

Lemma \ref{convex-subdiff} shows an equivalent relationship between $G$-convexity and $G$-subdifferentiability, a special case of which is that a function is convex if and only if it is subdifferentiable on (the interior of) its domain. 

\begin{lemma}[$G$-subdifferentiability characterizes $G$-convexity]
	\label{convex-subdiff}
	A function $u: X \longrightarrow \R$ is $G$-convex if and only if it is $G$-subdifferentiable everywhere.
\end{lemma}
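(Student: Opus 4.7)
The plan is to show that this equivalence is almost tautological once one unpacks the two definitions and uses the defining property of $H$, namely $G(x,y,H(x,y,u)) = u$ (which follows from \Gfour\ making $G(x,y,\cdot)$ strictly monotone, so invertible on its range). The only subtle point is checking that the prices implicit in each definition match up correctly via $H$.

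For the forward direction, I would fix $x_0 \in X$ and apply the definition of $G$-convexity to obtain $y_0 \in cl(Y)$ and $z_0 \in cl(Z)$ with $u(x_0) = G(x_0, y_0, z_0)$ and $u(x) \ge G(x, y_0, z_0)$ for every $x \in X$. The first equality means precisely that $z_0 = H(x_0, y_0, u(x_0))$, so the pointwise inequality can be rewritten as
\begin{equation*}
u(x') \ge G\bigl(x', y_0, H(x_0, y_0, u(x_0))\bigr) \quad \text{for all } x' \in X,
\end{equation*}
which is exactly the condition $y_0 \in \partial^G u(x_0)$. Since $x_0$ was arbitrary, $u$ is $G$-subdifferentiable everywhere.

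For the converse, fix $x_0 \in X$ and pick any $y_0 \in \partial^G u(x_0)$. Set $z_0 := H(x_0, y_0, u(x_0))$, which is well defined because the $G$-subdifferential definition already presupposes this. By the defining property of $H$ we have $G(x_0, y_0, z_0) = u(x_0)$, and the $G$-subdifferential inequality gives $u(x) \ge G(x, y_0, z_0)$ for all $x \in X$. These are precisely the two conditions needed for $G$-convexity at $x_0$, so $u$ is $G$-convex.

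The main (very mild) obstacle is book-keeping around $H$: one must verify it is legitimate to invert $G(x,\cdot,\cdot)$ in its third argument, which is ensured by \Gfour\ together with \Gzero. No further regularity of $u$ is needed, since both directions are purely pointwise. In particular, this argument does not rely on \Gtwo\ or \Gthree, only on \Gzero--\Gone\ and \Gfour, and mirrors exactly the elementary classical fact that a function is convex iff it is subdifferentiable at every point of its (open) domain.
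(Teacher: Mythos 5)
Your proof is correct and follows essentially the same route as the paper's: both directions unpack the two definitions pointwise and use $z_0 = H(x_0,y_0,u(x_0))$ to translate between the price appearing in the $G$-convexity condition and the one implicit in the $G$-subdifferential. No gaps.
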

\begin{proof}[Proof
	]
	Assume $u$ is $G$-convex, want to show $u$ is $G$-subdifferentiable everywhere, i.e., need to prove  $\partial^G u(x_0)\neq \emptyset$ for all $x_0\in X$.
	
	Since $u$ is $G$-convex, by definition, for each $x_0$, there exists $ y_0, z_0$, such that $u(x_0) = G(x_0,y_0,z_0)$, and for all  $ x \in X, u(x)\ge G(x, y_0, z_0) = G(x, y_0, H(x_0,y_0,u(x_0)))$. By the definition of $G$-subdifferentiability, $y_0 \in \partial^G u(x_0)$, i.e. $\partial^G u(x_0) \neq \emptyset$.

	On the other hand, assume $y$ is $G$-subdifferentiable everywhere, then for each $ x_0 \in X$,  there exists $ y_0 \in \partial^G u(x_0)$. Set $z_0:=H(x_0,y_0,u(x_0))$ so that $u(x_0) 
	= G(x_0, y_0, z_0)$.
	
	Since $y_0\in \partial^G u(x_0)$, we have $u(x)\ge G(x,y_0,H(x_0,y_0,u(x_0))) = G(x,y_0,z_0)$ 
	for all $x\in X$.
	By definition, $u$ is $G$-convex.
\end{proof}

We now show each $G$-convex function defined in Definition \ref{$G$-convexity} can be achieved by 
some price menu $v$,  and conversely each price menu yields a $G$-convex indirect utility \cite{Trudinger14}.
We require either \Gfive\ or \eqref{repulsed}, which asserts
all agents are repelled by the maximum price,  and insensitive to which contract they receive at that price.


	\begin{proposition}[Duality between prices and indirect utilities]\label{Prop:Gtransform}
	Assume \Gzero\ and \Gfour. {\rm (a)} If 
\begin{equation}\label{repulsed}
G(x,y,\bar{z}) := \lim_{z \to \bar z}G(x,y,z) = \inf_{(\tilde{y}, \tilde{z})\in cl(Y\times Z)} G(x, \tilde{y}, \tilde{z}) 
\qquad {\forall} (x,y)\in X\times cl(Y),
\end{equation} 
then a function $u \in C^0(X)$ is $G$-convex if and only if there exist a lower semicontinuous 
$v: cl(Y) \longrightarrow cl(Z)$ such that $u(x) = \max_{y\in cl(Y)} G(x,y,v(y))$.
{\rm (b)} If instead of \eqref{repulsed} we assume \Gfive,
then $u_\nul \le u \in C^0(X)$ is $G$-convex if and only if there exists a lower semicontinuous function $v: cl(Y) \longrightarrow cl(Z)$ with $v(y_\nul) \le z_\nul$ such that $u(x) = \max_{y\in cl(Y)} G(x,y,v(y))$.
		\end{proposition}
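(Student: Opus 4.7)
My plan is to prove both implications of parts (a) and (b) in parallel, with parallel arguments.

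\emph{From menu to $G$-convex utility.} Given a lower semicontinuous $v : cl(Y) \to cl(Z)$, I first argue the supremum defining $u(x)$ is attained. Since $cl(Y)$ is compact and $G$ is continuous and strictly $z$-decreasing by \Gzero\ and \Gfour, the composition $y \mapsto G(x, y, v(y))$ is upper semicontinuous: along $y_n \to y$, passing to a subsequence with $v(y_n) \to \ell \ge v(y)$, one gets $G(x, y_n, v(y_n)) \to G(x, y, \ell) \le G(x, y, v(y))$, where \eqref{repulsed} or \Gfive\ handles the case $\ell = \bar z = +\infty$. Hence the max is attained at some $y_0 \in cl(Y)$, and setting $z_0 := v(y_0) \in cl(Z)$ gives $u(x_0) = G(x_0, y_0, z_0)$ together with $u(x) \ge G(x, y_0, z_0)$ for every $x$, which is exactly $G$-convexity. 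In part (b), testing $y = y_\nul$ in the max and using \Gfour\ with $v(y_\nul) \le z_\nul$ yields $u(x) \ge G(x, y_\nul, v(y_\nul)) \ge G(x, y_\nul, z_\nul) = u_\nul(x)$.

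\emph{From $G$-convex utility to menu.} I construct the $G$-dual price menu by
\[
v(y) \;:=\; \sup_{x \in X} H(x, y, u(x)),
\]
extending $H$ by $\underbar z$ at points where $u(x)$ exceeds the range of $G(x, y, \cdot)$ on $cl(Z)$ from above (this only decreases the relevant summand and leaves the supremum unharmed). By \Gzero\ and \Gfour\ the inverse $H$ is continuous in all three arguments, so $(x, y) \mapsto H(x, y, u(x))$ is continuous; then $v$ is lower semicontinuous as a pointwise supremum of continuous functions of $y$. To see $v(y) \le \bar z$, note that by \Gfour\ this is equivalent to $u(x) \ge G(x, y, \bar z)$ for every $x \in X$: under \eqref{repulsed} the right-hand side is the infimum of $G(x, \cdot, \cdot)$, so the inequality is automatic; under \Gfive\ it follows from $u \ge u_\nul$ together with $u_\nul(x) \ge G(x, y, \bar z)$.

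\emph{Matching and the outside option.} I will then verify the identity $u(x) = \max_{y \in cl(Y)} G(x, y, v(y))$. For the upper bound, specializing $x' = x$ in the defining supremum gives $v(y) \ge H(x, y, u(x))$, whence \Gfour\ yields $G(x, y, v(y)) \le u(x)$ for every $y$. For attainment, $G$-convexity of $u$ at $x_0$ provides $(y_0, z_0) \in cl(Y \times Z)$ with $u(x_0) = G(x_0, y_0, z_0)$ and $u(x) \ge G(x, y_0, z_0)$ on all of $X$; by \Gfour\ the second inequality is equivalent to $H(x, y_0, u(x)) \le z_0$ for all $x$, so $v(y_0) \le z_0$, and hence $G(x_0, y_0, v(y_0)) \ge G(x_0, y_0, z_0) = u(x_0)$, realizing the max at $y_0$. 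For part (b), the bound $v(y_\nul) \le z_\nul$ follows from $u \ge u_\nul = G(\cdot, y_\nul, z_\nul)$ and \Gfour\ applied at $y = y_\nul$.

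\emph{Main obstacle.} The delicate step throughout is the bookkeeping of boundary behavior in $z$, particularly when $\bar z = +\infty$ or when $u(x)$ leaves the range of $G(x, y, \cdot)$ on $cl(Z)$; the hypotheses \eqref{repulsed} and \Gfive\ are tailored precisely to keep $v$ inside $cl(Z)$, the composition $y \mapsto G(x, y, v(y))$ upper semicontinuous on the compact set $cl(Y)$, and the supremum defining $u$ attained.
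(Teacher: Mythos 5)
Your proposal is correct, but it proves the substantive direction by a genuinely different route than the paper. For the implication ``$G$-convex $\Rightarrow$ price menu,'' the paper builds $v$ only on the set $A=\cup_{x}\partial^G u(x)$ of products actually assigned (setting $v(y_0)=z_0$ from the supporting contract), then must separately check well-definedness, prove lower semicontinuity of $v$ on $A$ by a sequential argument, extend $v$ to $cl(Y)$ by its lower semicontinuous hull (equal to $\bar z$ off $cl(A)$), and re-verify the max identity on $cl(A)\setminus A$. You instead define $v$ globally by the $G$-transform $v(y)=\sup_{x\in X}H(x,y,u(x))$, which makes well-definedness and lower semicontinuity automatic (a supremum of lower semicontinuous functions of $y$) and reduces the max identity to two applications of \Gfour: $v(y)\ge H(x,y,u(x))$ gives the upper bound, and the supporting contract $(y_0,z_0)$ gives $v(y_0)\le z_0$, hence attainment. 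This is cleaner and closer in spirit to the $G$-duality of Trudinger that the paper alludes to; what it costs is some care at the boundary of the range of $G(x,y,\cdot)$, and in the converse direction you end up reproving the attainment statement that the paper delegates to Remark 3.5 (your upper-semicontinuity argument there is fine, and in case (a) with $\bar z=+\infty$ it even avoids the uniform-convergence assumption of that remark, since $\limsup_n G(x,y_n,v(y_n))\le G(x,y,M)$ for every finite $M$ already forces the limsup below $\inf G(x,\cdot,\cdot)$).

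Two small points to tighten. First, your parenthetical about the extension of $H$ is stated backwards: when $u(x)>G(x,y,\underbar z)$ the ``natural'' preimage lies below $\underbar z$, so replacing it by $\underbar z$ \emph{increases} the summand; the construction nevertheless works because for such $x$ one has $G(x,y,v(y))\le G(x,y,\underbar z)<u(x)$ directly, so the inequality $G(x,y,v(y))\le u(x)$ survives without needing $v(y)\ge H$. Second, lower semicontinuity of $v$ requires that each map $y\mapsto \tilde H(x,y,u(x))$ (with $\tilde H$ the truncated inverse) be lower semicontinuous, which follows from \Gzero, \Gfour\ and strict monotonicity of $G$ in $z$ but deserves a sentence, especially when $\bar z=+\infty$; these are routine verifications, not gaps.
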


\begin{proof}
	1. Suppose $u$ is $G$-convex. Then for any agent type $x_0\in X$, there exists a product and price
$(y_0,z_0) \in cl(Y \times Z)$, such that $u(x_0)=G(x_0,y_0,z_0)$ and $u(x)\ge G(x,y_0,z_0)$, for all $x\in X$. 
			
{
	Let $A:=\cup_{x\in X} \partial^{G} u(x)$ denote the corresponding set of products.          
	For $y_0\in A$, define $v(y_0) = z_0$, where $z_0\in cl(Z)$ and $x_0\in X$ satisfy $u(x_0)=G(x_0,y_0,z_0)$ and $ u(x)\ge G(x, y_0,z_0)$ for all $x\in X$.
	We shall shortly show this makes $v:A \longrightarrow cl(Z)$ (i) well-defined and (ii) lower semicontinuous. Taking (i)  for granted,  our construction yields
	\begin{equation}\label{restricted1}
	u(x)  =
	\max_{y \in A} G(x,y,v(y)) \qquad \forall x \in X.
	\end{equation}
	
	(i) Now for $y_0 \in A$, suppose there exist $(x_0, z_0), (x_1, z_1)\in X\times cl(Z)$ with 
	$z_0 \ne z_1$, such that $u(x_i) = G(x_i, y_0, z_i)$ and $u(x) \ge G(x, y_0, z_i)$ for all $x\in X$ and $i=0,1$.  Without loss of generality, assume $z_0< z_1$. By \Gfour, we know $u(x_1) = G(x_1, y_0, z_1)<G(x_1, y_0, z_0)$, contradicting $u(x)\ge G(x,y_0,z_0)$, for all $x\in X$.   
	Having shown $v:A \longrightarrow cl(Z)$ is well-defined, we now show it is lower semicontinuous.
}

%

{ (ii)} Suppose $\{y_{k}\}\subset A$ converges to $y_0 \in A$ and 
$z_\infty:= \lim\limits_{k \to \infty} v(y_k) = \liminf\limits_{y \rightarrow y_0}v(y)$.  We need to show 
$v(y_0) \le z_\infty$.  Letting $z_k := v(y_k)$ for each $k$,  there exists $x_k \in X$ such that 
\begin{equation}\label{sequence3.6}
u(x) \ge G( x, y_k,z_k) \qquad {\forall} x \in X \text{ and}\ k =0,1,2,\ldots,
\end{equation}
with equality holding at $x=x_k$.  In case (b) we deduce $z_\infty < \infty$ from 
$$
G(x_k,y_k,z_k) =u(x_k)  \ge G(x_k,y_\nul,z_\nul)
$$
and \Gfive.
Taking $k \to \infty$,  \Gzero\ (or \eqref{repulsed}  in case (a) when $z_\infty=+\infty$)
implies
\begin{equation}\label{limit3.6}
u(x) \ge G(x,y_0,z_\infty) \qquad {\forall} x \in X.
\end{equation}
Applying \Gfour\ to $G(x_0,y_0,z_0)=u(x_0)\ge G(x_0,y_0,z_\infty)$ yields 
the desired semicontinuity: $z_0 \le z_\infty$.

{(iii)} We extend $v$ from $A$ to $cl(Y)$ by taking its lower semicontinuous hull;
this does not change the values of $v$ on $A$, but satisfies $v(y_0) := \bar z$  on $y_0 \notin cl(A)$.
We now 
show this choice of price menu $v$ yields \eqref{1}. 
Recall for each $x \in X$,  there exists $(y_0,z_0) \in cl(Y \times Z)$ such that
$$
u(x) = G(x,y_0,z_0) \ge (u_\nul(x) := G(x,y_\nul,z_\nul) \ge) \sup_{y \in cl(Y)\setminus cl(A)} G(x,y,v(y)),
$$
in view of \eqref{repulsed} (or \Gfive), and the fact that $v(y) = \bar z$ for each $y$ outside $cl(A)$.
Thus to establish \eqref{1},  we need only show that \eqref{restricted1} remains true when
the domain of the maximum is enlarged from $A$ to $cl(A)$.
Since we have chosen the {\em largest} lower semicontinuous extension of $v$ outside of $A$,
each $y_0 \in cl(A) \setminus A$ is approximated by a sequence  $\{y_{k}\}\subset A$
for which $z_k := v(y_k)$ converges to $z_\infty :=v(y_0)$. As before, \eqref{sequence3.6} holds
 and implies \eqref{limit3.6}, showing \eqref{restricted1} indeed remains true when
the domain of the maximum is enlarged from $A$ to $cl(A)$, and establishing \eqref{1}.
Finally, if $v(y_\nul) > z_\nul$ in case (b) then \Gfour\ yields $u(x) \ge u_\nul(x) > G(x,y_\nul,v(y_\nul))$, and
we may redefine $v(y_\nul):=z_\nul$ without violating either \eqref{1} or the lower semicontinuity of $v$.

	2. Conversely, suppose there exist a lower semicontinuous function $v: cl(Y)\longrightarrow cl(Z)$, such that $u(x)=\max_{y\in {cl(Y)}} G(x, y,v(y))$. Then for any $x_0\in X$, there exists $y_0 \in cl(Y)$, such that $u(x_0) = G(x_0, y_0, v(y_0))$. Let $z_0:= v(y_0)$, then $u(x_0)= G(x_0, y_0, z_0)$, and for all $x\in X$, $u(x)\ge G(x, y_0, z_0)$. By definition, $u$ is $G$-convex.   If $v(y_\nul) \le z_\nul$ then $u(\cdot) \ge G(\cdot ,y_\nul,v(y_\nul)) \ge u_\nul(\cdot)$ 
by \eqref{1} and \Gfour.
\end{proof}

\begin{remark}[Optimal agent strategies] Assume \Gzero\ and \Gfour.
When $\bar z< \infty$,  lower semicontinuity of $v:cl(Y) \longrightarrow cl(Z)$ is enough to ensure the maximum 
\eqref{1} is attained.  However, when $\bar z=+\infty$ we can reach the same conclusion either by assuming the limit \eqref{repulsed} converges uniformly with respect to  $y \in cl(Y)$,  or else by assuming $v(y_\nul) \le z_\nul$ and \Gfive.
\end{remark}

\begin{proof}
	 For any fixed $x \in X$ let $u(x) = \sup\limits_{y\in cl(Y)} G(x,y,v(y))$. We will show that the maximum is attained. Since $cl(Y)$ is compact, suppose $\{y_{k}\}\subset cl(Y)$ converges to $y_0 \in cl(Y)$,
	$z_\infty:= \limsup\limits_{k \to \infty} v(y_k)$ and $u(x) = \lim\limits_{k\to \infty} G(x, y_k,  v(y_k))$. By extracting subsequence of $\{y_k\}$ and relabelling, without loss of generality, assume $\lim\limits_{k\to \infty} v(y_k) = z_{\infty}$. \\
	
	1.  If $z_{\infty}< \bar{z}$ then lower semicontinuity of $v$ yields $v(y_0)\le z_{\infty} < +\infty$. By \Gfour, one has
	\begin{equation}
		G(x, y_0, v(y_0)) \ge G(x, y_0, z_{\infty}) = \lim\limits_{k\to \infty} G(x, y_k,  v(y_k)) = u(x) = \sup\limits_{y\in cl(Y)} G(x,y,v(y)).
	\end{equation}
	Therefore, the maximum is attained by $y_0$.
	
	2. If $z_{\infty} = \bar{z}$ then $\lim\limits_{k\to \infty} v(y_k) = \bar{z} = + \infty$.
	
	2.1. By assuming the limit \eqref{repulsed} converges uniformly with respect to  $y \in cl(Y)$, we have 
	\begin{equation*}
		\inf_{(\tilde{y}, \tilde{z})\in cl(Y\times Z)} G(x, \tilde{y}, \tilde{z}) = G(x,y_0,\bar{z}) = \lim_{k \to \infty} G(x,y_k, v(y_k)) = u(x) = \sup \limits_{y\in cl(Y)} G(x,y,v(y)).
	\end{equation*}
	In this case, the maximum is attained by $y_0$. 
	
	2.2. By assuming \Gfive, for sufficient large $k$, we have $G(x, y_k, v(y_k)) < G(x, y_{\emptyset}, z_{\emptyset})$. Taking $k \to \infty$, by $v(y_{\emptyset}) \le z_{\emptyset}$  and \Gfour, one has
	\begin{equation*}
		\sup\limits_{y\in cl(Y)} G(x,y,v(y)) = u(x) = \lim_{k \to \infty} G(x,y_k, v(y_k)) \le G(x, y_{\emptyset}, z_{\emptyset}) \le G(x, y_{\emptyset}, v(y_{\emptyset})).
	\end{equation*}
	Thus, the maximum is attained by $y_{\emptyset}$.
	\end{proof}

Lemma \ref{L:incen/convex} plays the role of bridge connecting incentive compatibility in the economic context with $G$-convexity and $G$-subdifferentiability in mathematical analysis, generalizing the results of Rochet \cite{Rochet87} and Carlier \cite{Carlier01}.

\begin{lemma}[$G$-convex utilities characterize incentive compatibility]\label{L:incen/convex}
A measurable map $x\in X \longmapsto (y(x), z(x)) \in cl(Y \times Z) 
$ represents an incentive compatible contract if and only if $u(\cdot):=G(\cdot,y(\cdot),z(\cdot))$ is $G$-convex on $X$ and $y(x)\in \partial^G u(x)$ for each $ x \in X$.
\end{lemma}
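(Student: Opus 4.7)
The argument is essentially an unpacking of definitions, with \Gfour\ playing the crucial role of making $H(x,y,\cdot)=G^{-1}(x,y,\cdot)$ well-defined so that $H(x,y(x),u(x))=z(x)$. I will prove both implications separately.

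For the forward direction, suppose $x\mapsto (y(x),z(x))$ is incentive compatible, and set $u(x):=G(x,y(x),z(x))$. I fix an arbitrary $x_0\in X$ and take $(y_0,z_0):=(y(x_0),z(x_0))$, so that $u(x_0)=G(x_0,y_0,z_0)$ by construction. For any other $x\in X$, incentive compatibility applied to $x$ (who must not wish to imitate $x_0$) yields
\begin{equation*}
u(x)=G(x,y(x),z(x))\ge G(x,y(x_0),z(x_0))=G(x,y_0,z_0),
\end{equation*}
which verifies $G$-convexity in the sense of Definition \ref{$G$-convexity}. To conclude $y(x_0)\in \partial^G u(x_0)$, I use \Gfour\ to write $H(x_0,y_0,u(x_0))=H(x_0,y_0,G(x_0,y_0,z_0))=z_0$, so the previous inequality becomes $u(x)\ge G(x,y_0,H(x_0,y_0,u(x_0)))$ for every $x\in X$, which is exactly the defining condition of the $G$-subdifferential.

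For the reverse direction, suppose $u(\cdot)=G(\cdot,y(\cdot),z(\cdot))$ is $G$-convex and that $y(x)\in \partial^G u(x)$ for every $x\in X$. Fix any pair $x,x'\in X$. From $u(x)=G(x,y(x),z(x))$ and \Gfour\ I again obtain $H(x,y(x),u(x))=z(x)$. The $G$-subdifferential condition $y(x)\in \partial^G u(x)$ evaluated at the point $x'$ then yields
\begin{equation*}
u(x')\ge G(x',y(x),H(x,y(x),u(x)))=G(x',y(x),z(x)).
\end{equation*}
Since $u(x')=G(x',y(x'),z(x'))$, rewriting this inequality and relabeling $(x,x')$ gives exactly the incentive compatibility inequality $G(x,y(x),z(x))\ge G(x,y(x'),z(x'))$ for all $x,x'\in X$.

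There is no genuine obstacle here; the only point requiring minor care is to invoke \Gfour\ (via the definition of $H$) to translate between the price variable $z(x)$ appearing in the incentive compatibility inequality and the utility level $u(x)$ appearing in the definition of $\partial^G u$. The lemma is essentially a dictionary between the economic language (truthful reporting) and the mathematical language ($G$-convexity plus $G$-subgradient containment), mirroring the classical equivalence between cyclical monotonicity and convex subdifferentiability in the quasilinear case of Rochet \cite{Rochet87}.
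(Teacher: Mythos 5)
Your proof is correct and follows essentially the same route as the paper's: both directions are an unpacking of the definitions, using \Gfour\ (through the identity $H(x,y(x),u(x))=z(x)$) to translate between the price variable in the incentive compatibility inequality and the utility level in the definition of $\partial^G u$. No gaps.
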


\begin{proof}[Proof
]

Suppose $(y,z)$ is incentive compatible, then for each $ x_0 \in X$, let $y_0 = y(x_0)$, $z_0 = z(x_0)$, then $u(x_0) = G(x_0, y(x_0), z(x_0)) = G(x_0, y_0, z_0)$; moreover for all $ x\in X$, $u(x)=G(x, y(x), z(x)) \ge G(x, y(x_0), z(x_0)) = G(x,y_0,z_0)$. 
 By definition, $u$ is $G$-convex. 
	
	Since $u(x_0)=G(x_0, y_0, z_0)$, one has $z_0 = H(x_0, y_0, u(x_0))$.  Since for all $ x\in X$, $u(x) \ge G(x, y_0, z_0)$, thus $u(x)\ge G(x, y_0, H(x_0, y_0, u(x_0)))$. By definition, $y(x_0) = y_0 \in \partial^G u(x_0)$.
	
On the other hand, suppose $u(x)=G(x,y(x),z(x))$ is $G$-convex, and for each $ x\in X$, $y(x)\in \partial^G u(x)$,  so that $ u(x')\ge G(x', y(x), H(x, y(x), u(x)))=G(x', y(x), z(x))$ for all $ x'\in X$. Then $G(x',y(x'),z(x')) \ge G(x', y(x), z(x))$ for all $x, x'\in X$.
	By definition, (y,z) is incentive compatible.
\end{proof}


The following proposition not only reformulates the principal's problem, but manifests the existence of maximizer(s). 
For other existence results guaranteeing this supremum is attained in the non-quasilinear setting, 
see N\" oldeke-Samuelson \cite{NoldekeSamuelson15p} who require mere continuity of the direct utility $G$,
and Zhang~\cite{Zhang} who relaxes relative compactness of the domain.

In this paper, we use $\mathcal{L}^m$ to denote Lebesgue measure, which characterizes $m$-dimensional volume. A non-negative measure $\mu$ is said to be absolutely continuous with respect to  $\mathcal{L}^m$ if for every measurable set $A$, $\mathcal{L}^m(A) =0$ implies $\mu(A) =0$. This is written as $\mu \ll \mathcal{L}^m$. 

\begin{theorem}[Reformulating the principal's program using the agents' indirect utilities]\label{Thm:Existence}
Assume hypotheses \Gzero-\Gone and \Gfour--\Gfive, 
$\bar z < +\infty$  and $\mu 
\ll \mathcal{L}^m$. Setting \\ $\tilde{\Pi}(u,y)=\int_{X} \pi(x, y(x), H(x,y(x), u(x))) d\mu(x)$,
the principal's problem $(P_0)$ is equivalent to
\begin{equation}
(P)
\begin{cases}
\max \tilde{\Pi}(u,y) \\
\text{\rm among }
$G$\text{\rm-convex  $u(x) \ge u_\nul(x)$ with }
y(x) \in \partial^G u(x)\ \text{\rm for all } x \in X.\\
\end{cases}
\end{equation}
This maximum is attained. Moreover, $u$ determines $y(x)$ uniquely for a.e. $x \in X$.
\end{theorem}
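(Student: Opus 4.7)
The plan is threefold: translate $(P_0)$ into $(P)$; extract a maximizer for $(P)$ via the direct method; and invoke the twist \Gone\ to pin down $y$ from $u$ almost everywhere.

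For the equivalence I would pair each feasible $(y,v)$ for $(P_0)$ with $u(x):=G(x,y(x),v(y(x)))$. Lemma \ref{L:incen/convex} yields $G$-convexity of $u$ with $y(x)\in \partial^G u(x)$; individual rationality combined with $v(y_\nul)\le z_\nul$ and \Gfour\ delivers $u\ge u_\nul$; and $H(x,y(x),u(x))=v(y(x))$ turns $\Pi(v,y)$ into $\tilde{\Pi}(u,y)$. Conversely, given $(u,y)$ feasible for $(P)$, Proposition \ref{Prop:Gtransform}(b) furnishes a lower semicontinuous menu $v:cl(Y)\longrightarrow cl(Z)$ with $v(y_\nul)\le z_\nul$ realizing $u(x)=\max_y G(x,y,v(y))$; the construction in its proof forces $v(y(x))=H(x,y(x),u(x))$ wherever $y(x)\in \partial^G u(x)$, and Lemma \ref{L:incen/convex} then returns incentive compatibility.

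For attainment, I would take a maximizing sequence $(u_k,y_k)$. Compactness of $cl(X\times Y\times Z)$ under $\bar z<\infty$ makes $\{u_k\}$ uniformly bounded (between $u_\nul$ and $\sup G$) and equi-Lipschitz (with constant $\sup|G_x|$, via the $G$-convex support inequalities $u_k(x)\ge G(x,y_k(x'),z_k(x'))$ where $z_k(x'):=H(x',y_k(x'),u_k(x'))$). Arzel\`a--Ascoli then yields $u_k\to u_\infty$ uniformly on compacta along a subsequence; $G$-convexity and $u_\infty\ge u_\nul$ pass to the limit by selecting, at each $x_0$, a convergent subsequence of supporting pairs $(y_k(x_0),z_k(x_0))\in cl(Y\times Z)$. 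Rademacher's theorem then gives differentiability of $u_\infty$ off an $\mathcal{L}^m$-null set, which is $\mu$-negligible since $\mu\ll \mathcal{L}^m$, so $y_\infty(x):=y_G(x,u_\infty(x),Du_\infty(x))$ is well defined $\mu$-a.e. The step I expect to be the main obstacle is showing $\tilde{\Pi}(u_k,y_k)\to \tilde{\Pi}(u_\infty,y_\infty)$. The key lemma to establish is a $G$-convex analog of the classical gradient-convergence theorem: at any $x\in \dom Du_\infty$, I would extract a cluster point $(y_*,z_*)$ of the bounded sequence $(y_k(x),z_k(x))\in cl(Y\times Z)$, pass to the limit in $u_k(x)=G(x,y_k(x),z_k(x))$ and $u_k(\cdot)\ge G(\cdot,y_k(x),z_k(x))$ to see that $(y_*,z_*)$ supports $u_\infty$ at $x$, and then use the envelope identity $Du_\infty(x)=G_x(x,y_*,z_*)$ together with the injectivity asserted by \Gone\ to conclude $(y_*,z_*)=\bar y_G(x,u_\infty(x),Du_\infty(x))$, killing any spurious cluster point. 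This forces the full sequence $(y_k(x),z_k(x))$ to converge a.e.\ to $(y_\infty(x),H(x,y_\infty(x),u_\infty(x)))$; continuity of $\pi$ and its boundedness on $cl(X\times Y\times Z)$ then permit dominated convergence to close the argument.

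For uniqueness of $y$ given $u$, at every $x\in \dom Du$ the condition $y(x)\in \partial^G u(x)$ combined with \eqref{EqnInverse} yields $u(x)=G(x,y(x),z(x))$ and $Du(x)=G_x(x,y(x),z(x))$ with $z(x)=H(x,y(x),u(x))$. The twist \Gone\ then determines $(y(x),z(x))=\bar y_G(x,u(x),Du(x))$ uniquely, and $\mu(X\setminus \dom Du)=0$ extends this to $\mu$-a.e.\ uniqueness of $y$.
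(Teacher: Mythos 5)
Your proposal is correct and follows essentially the same route as the paper: the equivalence via Lemma \ref{L:incen/convex} and Proposition \ref{Prop:Gtransform}, attainment by the direct method (uniform bounds and equi-Lipschitz estimates from compactness of $cl(X\times Y\times Z)$, Arzel\`a--Ascoli, stability of $G$-subdifferentials under uniform convergence, and the singleton property of $\partial^G u_\infty(x)$ on $\dom Du_\infty$ via \Gone), and a.e.\ uniqueness of $y$ from the twist condition. The only cosmetic difference is that you close the limit with dominated convergence where the paper uses the reverse Fatou inequality; both are valid since $\pi$ is bounded on the compact closure.
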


\begin{proof}
{
1. Proposition \ref{Prop:Gtransform} encodes a bijective correspondence between
lower semicontinuous price menus $v:cl(Y) \longrightarrow cl(Z)$ with $v(y_\nul) \le z_\nul$
and $G$-convex indirect utilities $u \ge u_\nul$; it also shows \eqref{1} is attained.
Fix a $G$-convex $u \ge u_\nul$ and the corresponding price menu
$v$.  For each $x \in X$ let $y(x)$ denote the point achieving the maximum \eqref{1}, so that 
$u(x) = G(x,y(x),z(x))$ with $z(x):=v(y(x)) = H(x,y(x),u(x))$ and $\Pi(v,y) = \tilde \Pi(u, y)$.
From \eqref{1} we see
\begin{equation}\label{TIEGsub}
G(\cdot, y(\cdot),v\circ y(\cdot)) = 
u(\cdot) \ge G(\cdot, y(x),H(x,y(x),u(x))),
\end{equation}
so that $y(x) \in \partial^Gu(x)$.  Apart from the measurability established below,
Lemma \ref{L:incen/convex} asserts incentive compatibility of $(y,v \circ y)$,
while $u \ge u_\nul$ shows individual rationality, so $(P)\le(P_0)$.  
}

2. The reverse inequality begins with a lower semicontinuous price menu $v:cl(Y) \longrightarrow cl(Z)$ with 
$v(y_\nul) \le z_\nul$ and an incentive compatible, individually rational map $(y,v \circ y)$
on $X$. 
Lemma \ref{L:incen/convex}
then asserts $G$-convexity of $u(\cdot) :=G(\cdot, y(\cdot),v(y(\cdot)))$ and that $y(x) \in \partial^Gu(x)$ for each 
$x \in X$.  Choosing $\cdot = x$ in the corresponding inequality \eqref{TIEGsub}
produces equality, whence \Gfour\ implies
$v(y(x)) = H(x,y(x),u(x))$ and $\Pi(v,y) = \tilde \Pi(u, y)$.  Since $u \ge u_\nul$ follows from individual 
rationality, we have established equivalence of $(P)$ to $(P_0)$.  Let us now argue the supremum $(P)$ is attained.

{
%
%

	}
	3. Let us first show  $\pi(x, y(x), H(x,y(x), u(x)))$ is measurable on $X$ for all $G$-convex $u$ and $y(x) \in \partial^G u(x)$.\\
By \Gzero, we know $G$ is Lipschitz, i.e., there exists $L>0$, such that $|G(x_1, y_1, z_1)-G(x_2, y_2, z_2)|< L ||(x_1-x_2,y_1-y_2, z_1-z_2)||$, for all $(x_1,y_1,z_1), (x_2, y_2, z_2) \in cl(X\times Y\times Z)$.
Since $u$ is $G$-convex, for any $x_1, x_2 \in X$, there exist $(y_1, z_1), (y_2, z_2) \in cl( Y \times Z)$, such that $u(x_i) = G(x_i,y_i,z_i)$, for $i=1,2$. Therefore, 
\begin{equation*}
\begin{split}
&u(x_1)-u(x_2) \ge G(x_1, y_2, z_2) - G(x_2, y_2, z_2) > -L ||x_1-x_2||\\
& 	u(x_1)-u(x_2) \le G(x_1, y_1, z_1) - G(x_2, y_1, z_1) < L ||x_1-x_2||\\
\end{split}
\end{equation*}
That is to say, $u$ is also Lipschitz with Lipschitz constant $L$. By Rademacher's theorem and $\mu \ll \mathcal{L}^m$, we have $\mu(X\setminus \dom Du) =\mathcal{L}^m(X\setminus \dom Du) =0$.  Moreover, since $u$ is continuous, $\frac{\partial u(x)}{\partial x_j} = \lim\limits_{h\rightarrow 0} \frac{u(x+he_j)-u(x)}{h}$ is measurable on $\dom Du$, for $j=1,2,..., m$, where $e_j=(0,...0, 1, 0, ...,0)$ is the unit vector in $\R^m$ with $j$-th coordinate nonzero. Thus, $Du$ is also Borel on $\dom Du$.\\
	Since $y(x) \in \partial^G u(x) $, for all $x\in \dom Du$, we have
	\begin{equation}\label{EqnInverse2}
	u(x) = G(x, y(x), H(x, y(x), u(x))), \ \ \ Du(x) = D_xG(x, y(x), H(x, y(x), u(x))).
	\end{equation}
	By \Gone, there exists a continuous function $\yG
	$, such that $y(x) = \yG(x, u(x), Du(x))$. Thus $y(x)$ is Borel on $\dom Du$, which implies $\pi(x, y(x), H(x,y(x), u(x)))$ is measurable on $X$, given $\pi \in C^0(cl(X\times Y\times Z))$ and $\mu \ll \mathcal{L}^m$. Here we use the fact that $H$ is also continuous since $G$ is continuous and strictly decreasing with respect
to its third variable.\\

	4. 
To show the supremum is attained,
	let $\{u_k\}_{k\in \NN}$ be a sequence of $G$-convex functions , $u_k(x) \ge u_{\emptyset}(x)$ and  $y_k(x) \in \partial^G u_k(x)$ for any $x\in X$ and $k \in \NN$,  such that $\lim_{k\rightarrow \infty} \tilde{\Pi}(u_k, y_k) = \sup \tilde{\Pi}(u,y)$, among all feasible $(u,y)$. Below we construct a feasible pair $(u_{\infty}, y_{\infty})$ 
attaining the maximum.
\\
	
	 4.1. Claim: There exists $M>0$, such that $|u(x)|<M$, for any $G$-convex $u$ and any $x \in X$. Thus $\{u_k\}_{k \in \NN}$ is uniformly bounded.\\
	 Proof: Since $u$ is $G$-convex, for any $x\in X$, there exists $(y, z)\in cl( Y \times Z) $, such that $u(x) = G(x,y,z)$. Notice that $G$ is bounded, since $G$ is continuous on a compact set. Thus, there exists $M>0$, such that $|u(x)| =| G(x,y,z)|<M$ is also bounded.\\
	 
	 4.2. From part 1, we know $\{u_k\}_{k\in \NN}$ are uniformly Lipschitz with Lipschitz constant $L$, thus $\{u_k\}_{k\in \NN}$ are uniformly equicontinuous.\\
	
	4.3. By Arzel\`a-Ascoli theorem, there exists a subsequence of $\{u_k\}_{k\in \NN}$, again denoted as $\{u_k\}_{k\in \NN}$, and $u_{\infty} : X \longrightarrow \R$ such that $\{u_k\}_{k\in \NN}$  converges uniformly to $u_{\infty}$ on $X$.\\
	
	4.4. Claim: $u_{\infty}$ is also Lipschitz.\\
	Proof: For any $\varepsilon >0$, any $x_1, x_2 \in X$, since $\{u_k\}_{k\in \NN}$ converges to $u_{\infty}$ uniformly, there exist $K>0$, such that for any $k >K$, we have $|u_k(x_i) -u_{\infty}(x_i)|<\varepsilon$, for $i=1,2$. Therefore, 
	\begin{equation*}
		|u_{\infty}(x_1) -u_{\infty}(x_2)| \le |u_k(x_1)-u_{\infty}(x_1)| + |u_k(x_2)-u_{\infty}(x_2)| + |u_k(x_1) -u_k(x_2)| < 2\varepsilon + L ||x_1-x_2||.
	\end{equation*}
	Since the above inequality is true for all $\varepsilon >0$, thus $u_{\infty}$ is also Lipschitz.\\
	
	4.5. For any $x\in X$, since $u_k(x)\ge u_{\emptyset}(x)$ and $\lim\limits_{k\rightarrow \infty} u_{k}(x) = u_{\infty}(x)$, we have $u_{\infty}(x)\ge u_{\emptyset}(x)$. Therefore, $u_{\infty}$ satisfies the participation constraint.\\
	
	4.6. For any fixed $x\in X$, since $\{y_k(x)\}_{k \in \NN} \subset cl(Y)$ which is compact, there exists a subsequence $\{y_{k_l}(x)\}_{l\in \NN}$ which converges. Define $y_{\infty}(x):= \lim\limits_{l\rightarrow \infty} y_{k_l}(x) \in cl(Y)$.
	For each $l\in \NN$, because  $y_{k_l}(x)\in \partial^G u_{k_l} (x)$, by definition, we have $u_{k_l}(x_0)\ge G(x_0, y_{k_l}(x), H(x, y_{k_l}(x), u_{k_l}(x)))$, for any $x_0\in X$. This implies, for all $x_0\in X$, we have
	\begin{equation*}
	u_{\infty}(x_0) = \lim\limits_{l \rightarrow \infty} u_{k_l}(x_0) \ge \lim_{l\rightarrow \infty}  G(x_0, y_{k_l}(x), H(x, y_{k_l}(x), u_{k_l}(x))) \ge G(x_0, y_{\infty}(x), H(x, y_{\infty}(x), u_{\infty}(x))).
	\end{equation*} 
	Thus, $y_{\infty}(x) \in \partial^G u_{\infty}(x)$. \\
	Therefore, $\partial^G u_{\infty}(x) \ne \emptyset$, for any $x\in X$. By Lemma \ref{convex-subdiff}, this implies $u_{\infty}$ is $G$-convex.\\
	At this point, we have found a feasible pair $(u_{\infty}, y_{\infty})$, satisfying all the constraints in (P).\\
	
	4.7. Claim: For any $x\in \dom Du_{\infty}$, the sequence $\{y_k(x)\}_{k\in \NN} \subset cl(Y)$ converges to $y_{\infty}(x)$.\\
	Proof: Since $u_{\infty}$ is Lipschitz, by Rademacher's theorem, $u_{\infty}$ is differentiable almost everywhere in $X$, i.e. $\mu(X\setminus \dom Du_{\infty}) = \mathcal{L}^m(X\setminus \dom Du_{\infty}) =0$.\\
	For any $x\in \dom Du_{\infty}$ and any $\tilde{y}\in \partial^G u_{\infty}(x)$, we have $\tilde{y}(x) = \yG(x, u_{\infty}(x), Du_{\infty}(x))$, according to equation $(\ref{EqnInverse2})$ and hypothesis \Gone. This implies $ \partial^G u_{\infty}(x)$ is a singleton for each $x\in \dom Du_{\infty}$, i.e. $\partial^G u_{\infty}(x) =\{y_{\infty}(x)\}$.\\
	For any $x\in \dom Du_{\infty}$, by similar argument to that above in part 4.6, we can show that any (other) accumulation points of $\{y_k(x)\}_{k\in \NN}$ are elements in the set $\partial^G u_{\infty}(x)=\{y_{\infty}(x)\}$, i.e. the sequence $\{y_k(x)\}_{k\in \NN}$ converges to $y_{\infty}(x)$.\\
	
	4.8. Finally, since $\mu \ll \mathcal{L}^m$, by Fatou's lemma, we have 
	\begin{flalign*}
		\tilde{\Pi}(u_{\infty}, y_{\infty}) &= \int_{X} \pi(x, y_{\infty}(x), H(x,y_{\infty}(x), u_{\infty}(x))) d\mu(x)  \\
		&=\int_{X} \limsup\limits_{k\rightarrow \infty}\pi(x, y_k(x), H(x,y_k(x), u_k(x))) d\mu(x) \text{\hspace{1cm} (because $\mu \ll \mathcal{L}^m$)}\\
		&\ge \limsup\limits_{k\rightarrow \infty} \int_{X} \pi(x, y_k(x), H(x,y_k(x), u_k(x))) d\mu(x) \text{\hspace{1cm} (by Fatou's lemma)}\\
		& = \lim\limits_{k\rightarrow \infty} \tilde{\Pi}(u_k, y_k)\\
		&= \sup \tilde{\Pi}(u,y),
	\end{flalign*}
among all feasible (u,y). Thus, the supremum is attained.
\end{proof}

\begin{remark}[More Singular measures]
If $G \in C^2$ (uniformly in $z \in Z$) the same conclusions extend to $\mu$ which need not be absolutely
continuous with respect to Lebesgue,  provide $\mu$ vanishes on all hypersurfaces parameterized 
locally as a difference of convex functions \cite{FigalliKimMcCann11} \cite{Gigli11},  essentially because $G$-convexity then implies semiconvexity of $u$.  On the other hand,  apart from its final sentence,  
the proposition extends to all probability measures $\mu$ if $G$ is merely continuous, according to 
N\"oldeke-Samuelson \cite{NoldekeSamuelson15p}.  Our argument is simpler than
theirs on one point however:  
Borel measurability of $y(x)$ on $\dom Du$ follows automatically from $(G0)-(G1)$;
in the absence of these extra hypotheses,  they are required to make a measurable selection from among each
agent's preferred products to define $y(x)$.
\end{remark}

\begin{remark}[Tie-breaking rules for singular measures] When an agent $x$ finds more than one product which maximize his utility, in order to reduce the ambiguity, it is convenient to assume the principal has satisfactory persuasion to  convince the agent to choose one of those products which maximize the principal's profit. 
According to equation (\ref{EqnInverse}) and condition $(G1)$, this scenario would occur only for $x\in X \setminus \dom Du $, which has Lebesgue measure zero. Thus this convention has no effect for absolutely continuous measures, 
but can be used as in Figalli-Kim-McCann \cite{FigalliKimMcCann11} to extend our result to singular measures.
\end{remark}

\

\bigskip

\section{Concavity and Convexity Results}
The advantage of the reformulation from Section \ref{setting} is to make the principal's objective $\pmb \Pi$ 
depend on a scalar function $u$ instead of a vector field $y$.
By \Gone, the optimal choice $y(x)$ of   Lebesgue almost every agent $x\in X$
is uniquely determined by $u$.  Recall that $\bar{y}_G(x, u(x), Du(x))$ is the unique solution $(y,z)$ of the system ($\ref{EqnInverse}$), for any $x\in \dom Du$. 
Then the principal's problem (P) can be rewritten as maximizing a functional 
depending only on the agents' indirect utility $u$:
\begin{equation*}
	\max\limits_{ u\ge u_\nul \atop \text{$u$ is $G$-convex}} \pmb\Pi(u) := \max\limits_{u \ge u_\nul \atop \text{$u$ is $G$-convex}} \int_X \pi(x, \bar{y}_G(x, u(x), Du(x)))  d \mu(x) .
\end{equation*}
\\

Define $\mathcal{U}:=\{u:X\longrightarrow \R \mid  u \text{ is } G\text{-convex}\}$
and $\mathcal{U}_{\emptyset}:=\{u \in \mathcal{U}\mid  u\ge u_{\emptyset}\}$. 
Then the problem becomes to maximize $\pmb\Pi$ on $\mathcal{U}_\nul$. 
In this section, we give conditions under which the function space $\mathcal{U}_\nul$ 
is convex and the functional $\pmb\Pi$ is concave, often strictly. 
Uniqueness and stability of the principal's maximizing strategy follow from strict concavity as in \cite{FigalliKimMcCann11}. We also provide conditions under which $\pmb\Pi$ is convex. In this situation, the maximizers of $\pmb\Pi$ may not be unique, but are attained at extreme points of 
$\mathcal{U}_{\emptyset}$. (Recall that $u \in \mathcal{U}$ is called {\em extreme} if 
$u$ does not lie at the midpoint of any segment in $\mathcal{U}$.)
\\



	\begin{theorem}[$G$-convex functions form a convex set]\label{convex set}
		If $G: cl(X\times Y\times Z) \longrightarrow \R$ satisfies \Gzero-\Gtwo, then \Gthree\ becomes necessary and sufficient for the convexity of the set $\mathcal{U}$.
	\end{theorem}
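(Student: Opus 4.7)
The plan is to prove both directions separately, with sufficiency following from (G3) almost by definition, and necessity requiring the mild observation that linear combinations of $C^1$ functions inherit the regularity needed to identify supporting pairs uniquely.

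For sufficiency, assume (G3) and fix $u_0, u_1 \in \mathcal{U}$, $t \in [0,1]$, and $x_0 \in X$; I want to produce a supporting pair for $u_t := (1-t)u_0 + t u_1$ at $x_0$. By $G$-convexity, each $u_i$ admits a supporting pair $(y^{(i)}, z^{(i)}) \in cl(Y \times Z)$ with $u_i(x_0) = G(x_0, y^{(i)}, z^{(i)})$ and $u_i(\cdot) \ge G(\cdot, y^{(i)}, z^{(i)})$ on $X$. Using (G1)--(G2), define $(y^t, z^t)$ by the $G$-segment relation \eqref{$G$-segment} with base point $x_0$ and endpoints $(y^{(0)}, z^{(0)}), (y^{(1)}, z^{(1)})$. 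The $G$-component of that relation evaluated at $x_0$ gives $G(x_0, y^t, z^t) = u_t(x_0)$, while (G3) combined with the two supporting inequalities yields
\[
G(x, y^t, z^t) \le (1-t) G(x, y^{(0)}, z^{(0)}) + t G(x, y^{(1)}, z^{(1)}) \le u_t(x) \quad \text{for all } x \in X,
\]
so $u_t \in \mathcal{U}$.

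For necessity, assume $\mathcal{U}$ is convex, fix $x_0 \in X$ and $(y_0, z_0), (y_1, z_1) \in cl(Y \times Z)$, and consider the \emph{$G$-affine} functions $u_i(x) := G(x, y_i, z_i)$, which are $G$-convex by inspection. By hypothesis, $u_t := (1-t)u_0 + t u_1$ is $G$-convex, so some $(y', z')$ satisfies $u_t(x_0) = G(x_0, y', z')$ and $u_t \ge G(\cdot, y', z')$ on $X$. The key observation is that $u_0, u_1 \in C^1$ by (G0), whence $u_t$ is differentiable at $x_0$; the inequality $u_t - G(\cdot, y', z') \ge 0$ attaining equality at $x_0$ then forces $Du_t(x_0) = D_x G(x_0, y', z')$. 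Coupled with the scalar equation $G(x_0, y', z') = (1-t) G(x_0, y_0, z_0) + t G(x_0, y_1, z_1)$, this gives
\[
(G_x, G)(x_0, y', z') = (1-t)(G_x, G)(x_0, y_0, z_0) + t (G_x, G)(x_0, y_1, z_1),
\]
so by the injectivity in (G1) one has $(y', z') = (y_t, z_t)$. The supporting inequality therefore reads $G(x, y_t, z_t) \le (1-t) G(x, y_0, z_0) + t G(x, y_1, z_1)$ for all $x \in X$.

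To upgrade this chord bound to full convexity of $s \mapsto G(x, y_s, z_s)$ on $[0,1]$, I repeat the argument with endpoints replaced by $(y_{t_0}, z_{t_0}), (y_{t_1}, z_{t_1})$ for arbitrary $0 \le t_0 < t_1 \le 1$; a direct interpolation computation using \eqref{$G$-segment} shows the $G$-segment through these endpoints at parameter $s$ coincides with $(y_{(1-s)t_0 + s t_1}, z_{(1-s)t_0 + s t_1})$, so the chord bound on $[t_0,t_1]$ is exactly the definition of convexity on $[0,1]$. The main obstacle---and really the only delicate point---is recognizing that the $C^1$ regularity coming from (G0) transfers to $u_t$ and pins down $(y', z')$ to the correct $G$-segment point via the \emph{full} augmented map in (G1); without differentiability one would recover only the scalar component of the defining relation and could not invoke injectivity.
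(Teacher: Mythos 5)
Your proposal is correct and follows essentially the same route as the paper's proof: sufficiency by transporting the two supporting pairs along the $G$-segment and invoking \Gthree, and necessity by testing convexity of $\mathcal{U}$ on the $G$-affine functions $u_i(\cdot)=G(\cdot,y_i,z_i)$, using the first-order condition at $x_0$ together with the injectivity in \Gone\ to identify the supporting pair of $u_t$ with the $G$-segment point. Your final paragraph upgrading the endpoint chord bound to convexity on all of $[0,1]$ via the affine reparametrization of sub-segments is a point the paper leaves implicit, and it is handled correctly.
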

\begin{proof}
	Assuming \Gzero-\Gtwo, for any $u_0, u_1 \in \mathcal{U}$, define $u_t(x) := (1-t)u_0(x)+tu_1(x)$, $t\in (0,1)$. We want to show $u_t$ is $G$-convex as well, for each $t\in (0,1)$. \\
	
	For any fixed $x_0 \in X$, since $u_0, u_1$ are $G$-convex, there exist $(y_0,z_0), (y_1, z_1) \in cl( Y \times Z)$, such that $u_0(x_0)= G(x_0, y_0,z_0)$, $u_1(x_0)=G(x_0,y_1,z_1)$, $u_0(x)\ge G(x,y_0,z_0)$ and $u_1(x)\ge G(x,y_1,z_1)$, for all $ x\in X$. \\ 
		
	Denote $(x_0, y_t, z_t)$ the $G$-segment connecting $(x_0, y_0, z_0)$ and $(x_0, y_1, z_1)$. Then $u_t(x_0) = (1-t)u_0(x_0)+tu_1(x_0)=(1-t) G(x_0, y_0,z_0)+ tG(x_0,y_1,z_1) = G(x_0,y_t, z_t)$, where the last equality comes from ($\ref{$G$-segment}$). \\
	
	In order to prove $u_t$ is $G$-convex, it remains to show $u_t(x)\ge  G(x,y_t,z_t)$, for all $x\in X$. \\
		
	By \Gthree, $G(x, y_t, z_t)$ is convex in $t$, i.e., $G(x,y_t,z_t)\le (1-t)G(x, y_0,z_0)+tG(x, y_1, z_1)$. So, $u_t(x)=$ $  (1-t)u_0(x)+tu_1(x) \ge (1-t) G(x,y_0,z_0)+tG(x, y_1, z_1)\ge G(x, y_t, z_t)$, for each $x \in X$. By definition, $u_t$ is $G$-convex, i.e., $u_t\in \mathcal{U}$, for all $t \in (0,1)$. Thus, $\mathcal{U}$ is convex.\\

 Conversely, assume $\mathcal{U}$ is convex. For any fixed $x_0 \in X$, $(y_t,z_t) \in cl( Y \times Z)$ with $(x_0, y_t, z_t)$ being a $G$-segment, we would like to show $G(x,y_t, z_t) \le (1-t)G(x,y_0,z_0)+tG(x,y_1, z_1)$, for any $x\in X$.\\
 

 Define $u_i(x) := G(x, y_i, z_i)$, for $i=0,1$. Then by definition of $G$-convexity, $u_0, u_1 \in \mathcal{U}$. Denote $u_t:= (1-t)u_0+tu_1$, for all $t\in (0,1)$. Since $\mathcal{U}$ is a convex set, $u_t$ is also $G$-convex. For this $x_0$ and each $t\in (0,1)$, there exists $(\tilde{y}_t, \tilde{z}_t) \in cl( Y \times Z)$, such that $u_t(x)\ge G(x, \tilde{y}_t, \tilde{z}_t)$, for all $x\in X$, and equality holds at $x_0$. Thus, $Du_t(x_0) = D_x G(x_0, \tilde{y}_t, \tilde{z}_t)$.\\
 
 Since $(x_0, y_t, z_t)$ is a $G$-segment, from $(\ref{$G$-segment})$, we know $D_x G(x_0, y_t, z_t) = (1-t) D_x G(x_0, y_0, z_0) +t D_x G(x_0, y_1, z_1) = (1-t) Du_0(x_0) +t Du_1(x_0) = Du_t(x_0)$. Thus, by \Gone,  $(\tilde{y}_t , \tilde{z_t}) = (y_t, z_t)$, for each $t\in (0,1)$. Therefore, $(1-t)G(x,y_0,z_0)+tG(x,y_1, z_1) = u_t \ge G(x, \tilde{y}_t, \tilde{z}_t) = G(x, y_t, z_t)$, for all $x\in X$, i.e., $G(x, y_t, z_t)$ is convex in $t$ along any $G$-segment $(x_0, y_t, z_t)$.
\end{proof}

	
 The following theorem provides a sufficient and necessary condition for the functional $\pmb \Pi(u)$ to be concave. It reveals the relationship between linear interpolations on the function space $\mathcal{U}$ and G-segments on the underlying type space $cl( Y \times Z)$.
	
\begin{theorem}[Concavity of the principal's objective]\label{maintheorem}
	If $G$ and $\pi: cl(X\times Y\times Z) \longrightarrow \R$ satisfy \Gzero-\Gfive, the following statements are equivalent:\\
$(i)$ $t\in[0,1] \longmapsto \pi(x, y_t ,z_t)$ is concave along all G-segments $(x, y_t, z_t)$;\\
$(ii)$ $\pmb \Pi(u)$ is concave in $\mathcal{U}$ for all $\mu\ll \mathcal{L}^m$. 
\end{theorem}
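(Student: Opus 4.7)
The plan is to use Theorem~\ref{convex set} --- which under hypotheses \Gzero-\Gthree, all assumed here, makes $\mathcal{U}$ convex --- as the bridge between linear interpolation of utilities and $G$-segment interpolation of (product, price) pairs. Once that identification is in place, (i)~$\Rightarrow$~(ii) is a pointwise a.e.\ inequality integrated against $d\mu$, and (ii)~$\Rightarrow$~(i) is a localization argument that concentrates $\mu$ near a base point to recover pointwise information.

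For (i)~$\Rightarrow$~(ii), fix $u_0, u_1 \in \mathcal{U}$ and $t \in (0,1)$, and set $u_t := (1-t)u_0 + tu_1 \in \mathcal{U}$ by Theorem~\ref{convex set}. The Lipschitz estimate from the proof of Theorem~\ref{Thm:Existence} shows each $u_i$ is Lipschitz, hence differentiable off a Lebesgue-null (therefore $\mu$-null) set. At any $x$ where $u_0, u_1, u_t$ are simultaneously differentiable, write $(y^i(x), z^i(x)) := \bar y_G(x, u_i(x), Du_i(x))$ for $i=0,1$. The defining relations $u_i(x) = G(x, y^i, z^i)$ and $Du_i(x) = G_x(x, y^i, z^i)$ combine linearly to
\begin{align*}
u_t(x) &= (1-t)G(x, y^0, z^0) + t\,G(x, y^1, z^1),\\
Du_t(x) &= (1-t)G_x(x, y^0, z^0) + t\,G_x(x, y^1, z^1),
\end{align*}
which is exactly \eqref{$G$-segment}, so by \Gone\ and \Gtwo\ we identify $\bar y_G(x, u_t(x), Du_t(x))$ with the $G$-segment interpolant $(y_t(x), z_t(x))$ between $(y^0(x), z^0(x))$ and $(y^1(x), z^1(x))$. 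Pointwise hypothesis (i) then gives $\pi(x, y_t, z_t) \ge (1-t)\pi(x, y^0, z^0) + t\,\pi(x, y^1, z^1)$, and integration against $d\mu$ yields $\pmb\Pi(u_t) \ge (1-t)\pmb\Pi(u_0) + t\,\pmb\Pi(u_1)$.

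For (ii)~$\Rightarrow$~(i), fix $x_0 \in X$, endpoints $(y_0, z_0),(y_1, z_1) \in cl(Y\times Z)$, and $t\in(0,1)$. Take the $C^1$ test functions $u_i(x) := G(x, y_i, z_i)$, which lie in $\mathcal{U}$ by inspection, and set $u_t := (1-t)u_0 + tu_1 \in \mathcal{U}$. At $x_0$ the same linear-combination computation identifies $\bar y_G(x_0, u_t(x_0), Du_t(x_0)) = (y_t, z_t)$ and $\bar y_G(x_0, u_i(x_0), Du_i(x_0)) = (y_i, z_i)$ for $i=0,1$. Let $\mu_\varepsilon$ denote the normalized Lebesgue measure on $B(x_0, \varepsilon) \subset X$, so that $\mu_\varepsilon \ll \mathcal{L}^m$. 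Because the test functions are $C^1$ on a neighbourhood of $x_0$, the map $x \mapsto \pi(x, \bar y_G(x, u(x), Du(x)))$ is continuous there for each of $u = u_0, u_1, u_t$ (using \Gzero, \Gone\ for continuity of $\bar y_G$ and \Gfive\ for continuity of $\pi$). Applying (ii) with $\mu = \mu_\varepsilon$ and letting $\varepsilon \to 0$ delivers the pointwise inequality $\pi(x_0, y_t, z_t) \ge (1-t)\pi(x_0, y_0, z_0) + t\,\pi(x_0, y_1, z_1)$, which is (i).

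The main obstacle is the (ii)~$\Rightarrow$~(i) localization step: one must extract a pointwise statement from a hypothesis that is a priori only integrated, and this succeeds precisely because the test utilities $u_i = G(\cdot, y_i, z_i)$ are $C^1$, keeping the integrand continuous at the base point, so that concentrating $\mu$ recovers pointwise concavity of $\pi$ along the chosen $G$-segment. Beyond this, both directions rest on the same linearity trick --- convex combinations of $(u, Du)$ correspond exactly to $G$-segments of $(y, z)$ --- which makes the argument almost symmetric.
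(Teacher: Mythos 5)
Your proposal is correct and follows essentially the same route as the paper: the forward direction integrates the pointwise concavity along the $G$-segments identified via $(G_x,G)(x,y_t(x),z_t(x))=(Du_t,u_t)(x)$, and the reverse direction localizes $\mu$ to small balls around a base point using continuity of $x\mapsto \pi(x,y_t(x),z_t(x))$ (the paper phrases this as a contradiction with a fixed small ball rather than a limit $\varepsilon\to 0$, but the mechanism is identical).
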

	
\begin{proof}
	$(i)\Rightarrow (ii).$ For any $u_0, u_1 \in \mathcal{U}$, $t\in (0,1)$, define $u_t = (1-t)u_0+tu_1$. We want to prove $\pmb \Pi(u_t) \ge (1-t) \pmb \Pi(u_0) + t\pmb \Pi(u_1)$, for any $\mu\ll \mathcal{L}^m$.\\
	Equations $(\ref{EqnInverse})$ implies that there exist $y_0, y_1: \dom Du \longrightarrow cl(Y)$ and $z_0, z_1: \dom Du \longrightarrow cl(Z)$ such that
	\begin{flalign}\label{Eqn:u_01}
	\begin{split}
		(G_x, G) (x, y_0(x), z_0(x)) &= (Du_0, u_0)(x),\\
		(G_x, G) (x, y_1(x), z_1(x)) &= (Du_1, u_1)(x).
	\end{split}
	\end{flalign}
	For each $x\in \dom Du$, $(y_0(x), z_0(x)), (y_1(x), z_1(x)) \in cl( Y \times Z)$, let $t\in [0,1] \longmapsto (x, y_t(x), z_t(x))$ be the $G$-segment connecting $(x, y_0(x), z_0(x))$ and $(x, y_1(x), z_1(x))$. Combining $(\ref{Eqn:u_01})$ and  $(\ref{$G$-segment})$, we have 
	\begin{equation}\label{EqnG-segments}
		(G_x, G) (x, y_t(x), z_t(x)) = (Du_t, u_t)(x).
	\end{equation}
	Thus, by concavity of $\pi$ on $G$-segments, for every $t \in [0,1]$,
	\begin{align*}
		\pmb \Pi (u_t)&= \int_X \pi(x, y_t(x), z_t(x))  d \mu (x)\\
		&\ge \int_X (1-t)\pi(x, y_0(x), z_0(x)) +t\pi(x, y_1(x), z_1(x))    d\mu (x)\\
		&=(1-t)\pmb \Pi (u_0)+ t \pmb \Pi(u_1).
	\end{align*}
	Thus, $\pmb \Pi $ is concave in $\mathcal{U}$.\\
	
$(ii)\Rightarrow (i).$ To derive a contradiction, assume $(i)$ fails.  Then there exists a $G$-segment $(x_0, y_t(x_0), z_t(x_0))$ and $t_0\in (0,1)$ such that $\pi(x_0, y_{t_0}(x_0), z_{t_0}(x_0)) <  (1-t_0)\pi(x_0, y_0(x_0),z_0(x_0))+ t_0 \pi(x_0, y_1(x_0), z_1(x_0))$. 
Let $u_0(x) := G(x, y_0(x_0), z_0(x_0))$, $u_1(x):= G(x, y_1(x_0), z_1(x_0))$ and $u_{t_0} = (1-t_0)u_0 +t_0 u _1$. Then $u_0, u_1, u_{t_0} \in \mathcal{U}$.  From $(\ref{Eqn:u_01})$ we know, $y_i(x)\equiv y_i(x_0)$, $z_i(x)\equiv z_i(x_0)$, for $i=0,1$.  Let $t\in [0,1] \longmapsto (x, y_t(x), z_t(x))$ be the $G$-segment connecting $(x, y_0(x), z_0(x))$ and $(x, y_1(x), z_1(x))$. And combining $(\ref{EqnInverse})$ and $(\ref{$G$-segment})$, we have
 \begin{flalign*}
 (G_x, G) (x, y_0(x_0), z_0(x_0)) &= (Du_0, u_0)(x),\\
 (G_x, G) (x, y_1(x_0), z_1(x_0)) &= (Du_1, u_1)(x).\\
 (G_x, G) (x, y_{t_0}(x), z_{t_0}(x)) &= (Du_{t_0}, u_{t_0})(x).
 \end{flalign*}

Since $\pi$, $y_{t_0}$ and $z_{t_0}$ are continuous, there exists $\varepsilon >0$, such that 
\begin{flalign*}
\pi(x, y_{t_0}(x), z_{t_0}(x)) < (1-t_0)\pi(x, y_0(x_0),z_0(x_0))  + t_0 \pi(x, y_1(x_0), z_1(x_0)), \text{ for all } x\in B_{\varepsilon}(x_0).
\end{flalign*} 
Here we use $B_{\varepsilon}(x_0)$ denote the open ball in $\R^m$ centered at $x_0$ with radius $\varepsilon$.
Take $d\mu = d\mathcal{L}^m
\mid _{B_{\varepsilon} (x_0)}/{\mathcal{L}^m} 
(B_{\varepsilon}(x_0))$ to be uniform measure on $B_\varepsilon(x_0)$. 
Thus, 
\begin{align*}
	\pmb \Pi (u_{t_0}) &= \int_X \pi(x, y_{t_0}(x), z_{t_0}(x)) d\mu(x)\\
	&<  \int_X    (1-t_0)\pi(x, y_0(x_0),z_0(x_0))  + t_0 \pi(x, y_1(x_0), z_1(x_0))     d\mu(x)\\
	&=(1-t_0)\pmb \Pi(u_0)+ t_0 \pmb \Pi(u_1).\\
\end{align*}
	This contradicts the concavity of $\pmb \Pi$.
\end{proof}

A similar proof shows the following result. Corollary $\ref{Cor:concave}$ implies that concavity of the principal's profit is equivalent to concavity of principal's utility along qualified $G$-segments. Moreover, Theorem~\ref{convex set} and Corollary $\ref{Cor:concave}$ together imply that the principal's profit $\pmb \Pi$ is a concave functional on a convex space, under assumptions \Gzero-\Gfive, $\mu\ll \mathcal{L}^m$, and $(i)'$ below.

\begin{corollary}\label{Cor:concave}
	If $G$ and $\pi$ satisfy \Gzero--\Gfive,  the following are equivalent:\\
	$(i)'$ $t\in[0,1] \longmapsto \pi(x, y_t(x) ,z_t(x))$ is concave  along all G-segments $(x, y_t(x), z_t(x))$ whose 
	endpoints satisfy $\min\{G(x, y_0(x), z_0(x)),G(x,y_1(x),z_1(x))\} \ge u_{\emptyset}(x)$;\\
	$(ii)'$ $\pmb \Pi(u)$ is concave in $\mathcal{U}_{\emptyset}$  for all $\mu\ll \mathcal{L}^m$. \\
\end{corollary}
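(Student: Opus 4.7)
The strategy is to adapt the proof of Theorem \ref{maintheorem} to the constrained function space $\mathcal{U}_\emptyset$. The forward direction $(i)' \Rightarrow (ii)'$ carries over essentially unchanged, whereas the reverse direction $(ii)' \Rightarrow (i)'$ requires an additional step to ensure that the counterexample functions built from the failing $G$-segment satisfy the participation constraint $u \ge u_\emptyset$.

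For $(i)' \Rightarrow (ii)'$, I would fix $u_0, u_1 \in \mathcal{U}_\emptyset$ and form the linear interpolation $u_t := (1-t) u_0 + t u_1$, which again lies in $\mathcal{U}_\emptyset$: it is $G$-convex by Theorem \ref{convex set}, and $u_t \ge u_\emptyset$ since both endpoints do. At Lebesgue-almost every $x \in X$ the representative pairs $(y_i(x), z_i(x))$ recovered from $u_i$ via \eqref{EqnInverse} satisfy $G(x, y_i(x), z_i(x)) = u_i(x) \ge u_\emptyset(x)$, so the endpoint hypothesis of $(i)'$ is met pointwise. Concavity of $\pi$ along the resulting $G$-segment, integrated against $\mu \ll \mathcal{L}^m$, then yields $\pmb\Pi(u_t) \ge (1-t)\pmb\Pi(u_0) + t\pmb\Pi(u_1)$ exactly as in Theorem \ref{maintheorem}.

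For $(ii)' \Rightarrow (i)'$ I would argue by contraposition. Suppose $(i)'$ fails at some $x_0 \in X$, with endpoints $(y_0, z_0), (y_1, z_1) \in cl(Y \times Z)$ satisfying $G(x_0, y_i, z_i) \ge u_\emptyset(x_0)$, and some $t_0 \in (0,1)$ at which $\pi(x_0, y_{t_0}, z_{t_0}) < (1-t_0)\pi(x_0, y_0, z_0) + t_0 \pi(x_0, y_1, z_1)$. The main obstacle is that the natural counterexample $u_i(x) := G(x, y_i, z_i)$ used in the proof of Theorem \ref{maintheorem}, though $G$-convex, need not dominate $u_\emptyset$ away from $x_0$ and therefore need not lie in $\mathcal{U}_\emptyset$. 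I would overcome this by setting $\tilde u_i(x) := \max\{G(x, y_i, z_i), u_\emptyset(x)\}$; this is $G$-convex because the pointwise maximum of two $G$-convex functions has a nonempty $G$-subdifferential at each point (take a $G$-subgradient of whichever function realizes the maximum there), and so Lemma \ref{convex-subdiff} applies. By construction $\tilde u_i \in \mathcal{U}_\emptyset$.

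The last step is to localize the counterexample. When the endpoint inequalities are strict, $G(x_0, y_i, z_i) > u_\emptyset(x_0)$ for $i = 0, 1$, continuity provides a ball $B_\varepsilon(x_0)$ on which $\tilde u_i \equiv G(\cdot, y_i, z_i)$; the pairs recovered from $\tilde u_i$ via \eqref{EqnInverse} are then the constants $(y_i, z_i)$ on this ball, so $\tilde u_{t_0} := (1-t_0)\tilde u_0 + t_0 \tilde u_1$ agrees pointwise with $G(x, y_{t_0}(x), z_{t_0}(x))$ along the target $G$-segment there, and taking $\mu$ uniform on $B_\varepsilon(x_0)$ contradicts concavity of $\pmb\Pi$ on $\mathcal{U}_\emptyset$ as in Theorem \ref{maintheorem}. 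The boundary case $G(x_0, y_i, z_i) = u_\emptyset(x_0)$ reduces to the strict case via a preliminary perturbation $z_i \mapsto z_i - \delta$ with small $\delta > 0$, which strictly raises endpoint utility by \Gfour\ while continuity of $\pi$ preserves the strict concavity failure at $t_0$.
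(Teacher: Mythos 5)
The paper itself gives no separate argument for this corollary --- it merely remarks that ``a similar proof'' to Theorem \ref{maintheorem} works --- and your proposal is exactly that adaptation, so in structure it matches the intended proof. The forward direction is handled correctly (convexity of $\mathcal{U}_\emptyset$ via Theorem \ref{convex set}, plus the observation that the segments arising from $u_0,u_1\ge u_\nul$ automatically satisfy the endpoint restriction of $(i)'$), and your key extra idea for the converse --- replacing $G(\cdot,y_i,z_i)$ by $\tilde u_i=\max\{G(\cdot,y_i,z_i),u_\nul\}$, which is $G$-convex because a pointwise maximum of $G$-subdifferentiable functions is $G$-subdifferentiable (Lemma \ref{convex-subdiff}) --- is precisely the right way to restore feasibility in $\mathcal{U}_\emptyset$.

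The one loose end is your treatment of the boundary case. The perturbation $z_i\mapsto z_i-\delta$ presupposes $z_i-\delta\in cl(Z)=[\,\underbar z,\bar z\,]$; by \Gzero\ the lower endpoint $\underbar z$ is finite, and $G$-segments are allowed to have $z_i=\underbar z$, in which case the perturbed point leaves the domain of $G$ and the perturbed $G$-segment is undefined. A cleaner patch: if only one endpoint satisfies $G(x_0,y_i,z_i)=u_\nul(x_0)$ with equality, restrict to the sub-segment over $[a,1]$ (or $[0,1-a]$) for small $a>0$; since $t\mapsto G(x_0,y_t,z_t)$ is affine along a $G$-segment, the new endpoint has utility strictly above $u_\nul(x_0)$, the sub-segment reparametrized is again a $G$-segment, and the strict failure of concavity at $t_0$ survives for $a$ small by continuity. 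This reduces everything to your strict case except for the fully degenerate situation in which \emph{both} endpoints realize exactly $u_\nul(x_0)$ (so the whole segment does) \emph{and} the price cannot be lowered; there neither your perturbation nor the sub-segment trick applies, and some additional argument (or a mild extension of $G$ below $\underbar z$) is needed to close the converse completely. Apart from this corner case the proof is sound.
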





To obtain uniqueness and stability of optimizers requires a stronger form of convexity.
Recall that a function $f$ defined on a convex subset of a normed space 
is said to be strictly convex if $f((1-t)x + ty)>(1-t) f(x) + tf(y)$ 
whenever $0<t<1$ and $x\ne y$. It is said to be (2-)uniformly concave, if there exists    
$\lambda>0$, such that for any $x, y$ in the domain of $f$ and $t\in [0,1]$, the following inequality holds.
\begin{flalign*}
	f((1-t)x+ty) -(1-t)f(x) - t f(y) \ge t(1-t)\lambda||x-y||^2.
\end{flalign*}
For such strengthenings,  it is necessary to view indirect utilities $u \in \mathcal U$ 
as equivalence classes
of functions which differ only on sets of $\mu$ measure zero.  More precisely, it is natural to adopt 
the Sobolev norm
$$
\|u\|^2_{W^{1,2}(X,d\mu)} := \int_X( |u|^2 + |Du|^2) d\mu(x)
$$
on 
 $\mathcal{U}$ and $\mathcal{U}_{\emptyset}$. We then have the following results:

	\begin{corollary}\label{Cor:strictconcave}
		Let $\pi$ and $G$ satisfy \Gzero--\Gfive. If \\
		$(iii)$  $t\in[0,1] \longmapsto \pi(x, y_t ,z_t)$ is strictly concave along all G-segments $(x, y_t, z_t)$, then \\ $(iv)$ $\pmb \Pi(u)$ is strictly concave in $\mathcal{U} \subset W^{1,2}(X,d\mu)$  for all $\mu\ll \mathcal{L}^m$. If \\	
		$(iii)'$ $t\in[0,1] \longmapsto \pi(x, y_t(x) ,z_t(x))$ is strictly concave along all G-segments $(x, y_t(x), z_t(x))$ whose 
		endpoints satisfy $\min\{G(x, y_0(x), z_0(x)),G(x,y_1(x),z_1(x))\} \ge u_{\emptyset}(x)$, then \\
		$(iv)'$ $\pmb \Pi(u)$ is strictly concave in $\mathcal{U}_{\emptyset} \subset W^{1,2}(X,d\mu)$ for all $\mu\ll \mathcal{L}^m$. 
		\end{corollary}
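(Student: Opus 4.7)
The plan is to adapt the proof of Theorem \ref{maintheorem} and Corollary \ref{Cor:concave}, upgrading weak inequalities to strict ones by tracking the set where strict concavity of $\pi$ actually bites. For part $(iii)\Rightarrow(iv)$, I would start from $u_0,u_1\in\mathcal{U}$ with $\|u_0-u_1\|_{W^{1,2}(X,d\mu)}>0$ and, for $t\in(0,1)$, set $u_t=(1-t)u_0+tu_1\in\mathcal{U}$ (using Theorem \ref{convex set}, since (iii) implicitly requires \Gthree\ via the hypotheses of the corollary). As in the proof of Theorem \ref{maintheorem}, equations (\ref{EqnInverse}) and (\ref{EqnG-segments}) produce measurable fields $(y_i(x),z_i(x))$ for $i=0,1$ on $\dom Du_0\cap\dom Du_1$, together with the $G$-segment $(y_t(x),z_t(x))$ satisfying $(G_x,G)(x,y_t(x),z_t(x))=(Du_t,u_t)(x)$.

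The crux is to identify a set of positive $\mu$-measure on which the $G$-segment is nondegenerate. Define
\[
E:=\bigl\{x\in\dom Du_0\cap\dom Du_1\,:\,(u_0(x),Du_0(x))\neq(u_1(x),Du_1(x))\bigr\}.
\]
Since $u_0,u_1$ are Lipschitz (as established in the proof of Theorem \ref{Thm:Existence}) and $\mu\ll\mathcal{L}^m$, Rademacher's theorem gives $\mu(X\setminus(\dom Du_0\cap\dom Du_1))=0$, hence
\[
\|u_0-u_1\|_{W^{1,2}(X,d\mu)}^2=\int_E\bigl(|u_0-u_1|^2+|Du_0-Du_1|^2\bigr)d\mu(x)>0
\]
forces $\mu(E)>0$. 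On $E$, hypothesis \Gone\ implies the endpoints $(y_0(x),z_0(x))$ and $(y_1(x),z_1(x))$ are distinct, so the $G$-segment is nontrivial and (iii) yields the strict pointwise inequality
\[
\pi(x,y_t(x),z_t(x))>(1-t)\pi(x,y_0(x),z_0(x))+t\pi(x,y_1(x),z_1(x))
\]
for every $t\in(0,1)$; off $E$ the three values coincide. Integrating against $d\mu$ and invoking $\mu(E)>0$ gives $\pmb\Pi(u_t)>(1-t)\pmb\Pi(u_0)+t\pmb\Pi(u_1)$, establishing strict concavity on $\mathcal{U}$.

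For $(iii)'\Rightarrow(iv)'$, the same argument applies verbatim once one checks that the hypothesis on endpoints in $(iii)'$ is automatic on $\mathcal{U}_\emptyset$: if $u_0,u_1\ge u_\emptyset$, then $G(x,y_i(x),z_i(x))=u_i(x)\ge u_\emptyset(x)$ for $i=0,1$, so each $G$-segment at $x\in E$ has endpoints eligible for $(iii)'$, and the strict inequality propagates exactly as before. I expect no serious obstacle beyond the measurability and $\mu(E)>0$ bookkeeping described above; the argument is essentially Theorem \ref{maintheorem} with the observation that the Sobolev norm has been chosen precisely so that $u_0\neq u_1$ in $W^{1,2}(X,d\mu)$ is equivalent to $\mu(E)>0$, which is what converts pointwise strict concavity of $\pi$ on $G$-segments into strict concavity of the functional $\pmb\Pi$.
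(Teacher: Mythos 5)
Your proposal is correct and is essentially the argument the paper intends (the corollary is left unproved as "similar" to Theorem \ref{maintheorem}): you run the $(i)\Rightarrow(ii)$ integration argument and upgrade it to strict inequality by observing that $\|u_0-u_1\|_{W^{1,2}(X,d\mu)}>0$ forces $\mu(E)>0$, and that on $E$ hypothesis \Gone\ makes the pointwise $G$-segments nondegenerate so $(iii)$ (resp.\ $(iii)'$, whose endpoint condition holds automatically since $G(x,y_i(x),z_i(x))=u_i(x)\ge u_\emptyset(x)$) applies strictly. This is exactly why the Sobolev norm is introduced before the corollary, and no gap remains.
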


In addition, Theorem $\ref{convex set}$ and Corollary $\ref{Cor:strictconcave}$ together imply strict concavity of principal's profit on a convex space, which guarantees a unique solution to the monopolist's problem.\\

Define $\bar{G}(\bar{x}, \bar{y})=\bar{G}(x,x_0, y,z) := x_0 G(x, y,z)$, where $\bar{x}=(x, x_0)$, $\bar{y}=(y,z)$ and $x_0\in X_0$, where $X_0 \subset (-\infty, 0)$ is an open bounded interval containing $-1$. Hereafter, except in Appendix A, we use $x_0$ to denote a number in $X_0$. For further applications, we need the following non-degeneracy assumption. \\
\begin{itemize}
	\item[\Gsix] $G\in C^2(cl(X\times Y \times Z)
	)$, and $D_{\bar{x},\bar{y}}(\bar{G})(x,-1,y,z)$ has full rank, for each $(x,y,z)\in cl(X\times Y\times Z)$. \\
\end{itemize}

		 Since \Gone\ implies $m \ge n$,  full rank means $D_{\bar{x},\bar{y}}(\bar{G})(x,-1,y,z)$ has rank $n+1$.


\begin{theorem}[Uniform concavity of the principal's objective]\label{maintheorem2}
		Assume $G\in  C^{2}(cl(X\times Y\times Z))$ satisfies \Gzero-{\Gsix}. In case $\bar{z}=+\infty$, assume the homeomorphisms of \Gone\ are uniformly bi-Lipschitz. Then the following statements are equivalent:\\
		$(v)$ Uniformly concavity of $\pi$ along G-segments, i.e., there exists $\lambda>0$, for any $G$-segment $(x, y_t, z_t)$, and any $t\in [0,1]$, 
			\begin{flalign}\label{uniformconvavity}
			\begin{split}
			&\pi(x, y_t(x), z_t(x)) - (1-t)\pi(x, y_0(x), z_0(x)) - t\pi(x, y_1(x), z_1(x)) \\
			\ge & t(1-t)\lambda||(y_1(x)-y_0(x),z_1(x)-z_0(x))||^2_{\R^{n+1}}
			\end{split}
			\end{flalign} \\
		$(vi)$ $\pmb \Pi(u)$ is uniformly concave in $\mathcal{U} \subset W^{1,2}(X,d\mu)$,  uniformly for all $\mu\ll \mathcal{L}^m$. 
\end{theorem}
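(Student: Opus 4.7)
The plan is to follow the template of Theorem~\ref{maintheorem}, upgrading the pointwise concavity inequality along $G$-segments to a quadratic bound in the Sobolev norm by leveraging \Gsix\ to relate displacements in $(y,z)$ to displacements in $(u,Du)$. Concretely, the main auxiliary fact to establish is that under \Gzero--\Gsix\ (plus the theorem's extra assumption when $\bar z=+\infty$), the map $\Phi_x:(y,z)\in cl(Y\times Z)\longmapsto (G_x,G)(x,y,z)\in \R^{m+1}$ is bi-Lipschitz with constants $C^{\pm 1}$ independent of $x\in X$. This reduces to noting that the Jacobian $D_{y,z}\Phi_x$ agrees (up to signs) with the mixed Hessian $D_{\bar x,\bar y}\bar G(x,-1,y,z)$ appearing in \Gsix, so that full column rank $n+1$ holds everywhere; combined with $G\in C^2$, injectivity from \Gone, and compactness of $cl(X\times Y\times Z)$ when $\bar z<\infty$, a standard singular-value continuity-and-compactness argument produces the uniform bound (when $\bar z=+\infty$ it is simply postulated).

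\textbf{$(v)\Rightarrow(vi)$.} For $u_0,u_1 \in \mathcal{U}$ and $u_t=(1-t)u_0+tu_1$, repeat the construction of Theorem~\ref{maintheorem}: at $\mu$-a.e.\ $x$ define $(y_i(x),z_i(x))$ via $\Phi_x(y_i(x),z_i(x))=(Du_i,u_i)(x)$, and let $(y_t(x),z_t(x))$ trace the associated $G$-segment, so \eqref{EqnG-segments} holds. Applying \eqref{uniformconvavity} pointwise, integrating against $\mu$, and invoking the lower bound $\|(y_1(x)-y_0(x),z_1(x)-z_0(x))\|^2 \ge C^{-2}(|u_1(x)-u_0(x)|^2+|Du_1(x)-Du_0(x)|^2)$ from the bi-Lipschitz step gives
\begin{equation*}
\pmb\Pi(u_t)-(1-t)\pmb\Pi(u_0)-t\pmb\Pi(u_1)\ge t(1-t)\lambda C^{-2}\|u_1-u_0\|^2_{W^{1,2}(X,d\mu)},
\end{equation*}
which is exactly $(vi)$ with uniform constant $\lambda C^{-2}$ independent of $\mu$.

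\textbf{$(vi)\Rightarrow(v)$.} Argue by contrapositive. Given a candidate constant $\lambda'$ from $(vi)$, set $\lambda:=\lambda' C^{-2}$ and suppose \eqref{uniformconvavity} fails for this $\lambda$ at some $x_0\in X$, $(y_0,z_0),(y_1,z_1)\in cl(Y\times Z)$ and $t_0\in(0,1)$. Define $u_i(x):=G(x,y_i,z_i)\in\mathcal{U}$; then $(y_i(x),z_i(x))\equiv(y_i,z_i)$ are constant in $x$, and the interpolation of Theorem~\ref{maintheorem} yields $(G_x,G)(x,y_{t_0},z_{t_0})=(Du_{t_0},u_{t_0})(x)$ with $u_{t_0}=(1-t_0)u_0+t_0 u_1$. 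Continuity of $\pi,y_{t_0},z_{t_0}$ propagates the strict violation to a ball $B_\varepsilon(x_0)\subset X$; taking $\mu$ to be normalized Lebesgue measure on that ball, integrating, and applying the upper bound $\|(y_1-y_0,z_1-z_0)\|^2 \le C^2\|u_1-u_0\|^2_{W^{1,2}(X,d\mu)}$ yields
\begin{equation*}
\pmb\Pi(u_{t_0})-(1-t_0)\pmb\Pi(u_0)-t_0\pmb\Pi(u_1)<t_0(1-t_0)\lambda'\|u_1-u_0\|^2_{W^{1,2}(X,d\mu)},
\end{equation*}
contradicting $(vi)$.

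The main obstacle is the uniform bi-Lipschitz step: carefully dissecting \Gsix\ (a pointwise condition on the mixed Hessian of $\bar G = x_0 G$ at $x_0=-1$) to extract uniform two-sided Lipschitz control on $\Phi_x$ across all of $X$, and handling the case $\bar z=+\infty$ where the bound must be postulated rather than derived. Once that quantitative link between $(y,z)$ and $(Du,u)$ is secured, the remaining steps are essentially measured refinements of the proof of Theorem~\ref{maintheorem}.
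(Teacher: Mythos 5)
Your proposal is correct and follows essentially the same route as the paper: both directions hinge on the uniform bi-Lipschitz correspondence between $(y,z)$-displacements and $(u,Du)$-displacements furnished by \Gone\ and \Gsix\ (the paper's two constants are $\Lip(G_x,G)$ for the forward bound used in $(v)\Rightarrow(vi)$ and $\Lip(\bar{y}_G)$ for the inverse bound used, together with the localized uniform measure from Theorem~\ref{maintheorem}, in $(vi)\Rightarrow(v)$). Your identification of $D_{y,z}(G_x,G)$ with $D_{\bar{x},\bar{y}}\bar{G}(x,-1,y,z)$ up to sign is exactly how \Gsix\ enters in the paper as well.
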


\begin{proof}
	$(v)\Rightarrow (vi).$ With the same notation as last proof, we want to prove there exists 
	$\tilde{\lambda}>0$, such that $\pmb \Pi(u_t) - (1-t) \pmb \Pi(u_0) - t\pmb \Pi(u_1)\ge t(1-t)\tilde{\lambda}||u_1-u_0||^2_{W^{1,2}(X,d\mu)}$, for any $\mu\ll \mathcal{L}^m$, $u_0, u_1 \in \mathcal{U}$ and $t\in (0,1)$. \\
	Similar to last proof, we have (\ref{Eqn:u_01}) and (\ref{EqnG-segments}). Denote $\Lip(G_x,G)$ the uniform Lipschitz constant of the map $(x,y,z)\in X\times Y\times Z \longmapsto (G_x,G)(x,y,z)$.
	Thus by uniformly concavity of $\pi$ on $G$-segments, there exists 
	$\lambda>0$, such that for every $t \in [0,1]$,
	\begin{align*}
		&\pmb \Pi (u_t)-(1-t)\pmb \Pi (u_0)- t \pmb \Pi(u_1)\\
		&= \int_X \pi(x, y_t(x), z_t(x)) - (1-t)\pi(x, y_0(x), z_0(x)) - t\pi(x, y_1(x), z_1(x)) d \mu (x)\\
		&\ge \int_{X} t(1-t)\lambda||(y_1(x)-y_0(x),z_1(x)-z_0(x))||^2_{\R^{n+1}} d\mu (x)\\
		&\ge \int_{X} t(1-t)\lambda||(Du_1(x)-Du_0(x),u_1(x)-u_0(x))||^2_{\R^{n+1}}/{\Lip}^2(G_x,G) d\mu (x)\\
		&= t(1-t)\frac{\lambda}{\Lip^2(G_x,G)} ||u_1-u_0||_{W^{1,2}(X,d\mu)}.
		\end{align*}
		Thus, $\pmb \Pi $ is uniformly concave in $\mathcal{U}$, with $\tilde{\lambda} = \frac{\lambda}{\Lip^2(G_x,G)} >0$.\\
	
	$(vi)\Rightarrow (v).$ To derive a contradiction, assume $(v)$ fails.  Then for any $\lambda>0$, there exists a $G$-segment $(x^0, y_t(x^0), z_t(x^0))$, and some $\tau \in (0,1)$, such that $\pi(x^0, y_{\tau}(x^0), z_{\tau}(x^0)) -  (1-\tau)\pi(x^0, y_{0}(x^0),z_{0}(x^0))-\tau \pi(x^0, y_{1}(x^0), z_{1}(x^0)) < \tau (1-\tau)\lambda ||(y_{1}(x^0)-y_{0}(x^0), z_{1}(x^0)-z_{0}(x^0))||^2_{\R^{n+1}}$.
	
Take $u_0(x) := G(x, y_{0}(x^0), z_{0}(x^0))$, $u_1(x):= G(x, y_{1}(x^0), z_{1}(x^0))$ and for $t\in (0,1)$, assign $u_{t} := (1-t)u_0 +t u _1$. Then $ u_{t} \in \mathcal{U}$, for $t \in [0,1]$.  From $(\ref{Eqn:u_01})$ we know, $y_i(x)\equiv y_i(x^0)$, $z_i(x)\equiv z_i(x^0)$, for $i=0,1$.  Let $t\in [0,1] \longmapsto (x, y_t(x), z_t(x))$ be the $G$-segment connecting $(x, y_0(x), z_0(x))$ and $(x, y_1(x), z_1(x))$. And combining $(\ref{EqnInverse})$ and $(\ref{$G$-segment})$, we have
		\begin{flalign*}
		(G_x, G) (x, y_{0}(x^0), z_{0}(x^0)) &= (Du_0, u_0)(x),\\
		(G_x, G) (x, y_{1}(x^0), z_{1}(x^0)) &= (Du_1, u_1)(x).\\
		(G_x, G) (x, y_{t}(x), z_{t}(x)) &= (Du_{t}, u_{t})(x).
		\end{flalign*}

		Since $\pi$, $y_{\tau}$ and $z_{\tau}$ are continuous, there exists $\varepsilon >0$, such that for all  $x\in B_{\varepsilon}(x^0)$,
		\begin{flalign*}
		&\pi(x, y_{\tau}(x), z_{\tau}(x)) -  (1-\tau)\pi(x, y_{0}(x^0),z_{0}(x^0))-\tau \pi(x, y_{1}(x^0), z_{1}(x^0)) \\
		&< \tau (1-\tau)\lambda||(y_{1}(x^0)-y_{0}(x^0), z_{1}(x^0)-z_{0}(x^0))||^2_{\R^{n+1}}.
		\end{flalign*} 
		Here we use $B_{\varepsilon}(x^0)$ denote the open ball in $\R^m$ centered at $x^0$ with radius $\varepsilon$.
		Take $d\mu = d\mathcal{L}^m
		\mid _{B_{\varepsilon} (x^0)}/\mathcal{L}
		(B_{\varepsilon}(x^0))$ to be uniform measure on $B_\varepsilon(x^0)$. 
		 By \Gsix,   the map $\bar{y}_G:(x,p,q)\longmapsto(y,z)$, which solves equation (\ref{EqnInverse}), is uniformly Lipschitz on $X\times \R \times\R^m$. Denote  $\Lip(\bar{y}_G)$ its Lipschitz constant.
		
		Thus for such $\tau$, $u_0$, $u_1$ and $\mu$, we have
		\begin{align*}
		&\pmb \Pi (u_{\tau})- (1-\tau)\pmb \Pi(u_0)- \tau \pmb \Pi(u_1)\\
		&= \int_X \pi(x, y_{\tau}(x), z_{\tau}(x)) -  (1-\tau)\pi(x, y_{0}(x^0),z_{0}(x^0))-\tau \pi(x, y_{1}(x^0), z_{1}(x^0)) d\mu(x)\\
		&<  \int_X    \tau(1-\tau)\lambda ||(y_{1}-y_{0}, z_{1}-z_{0})||^2_{\R^{n+1}} d\mu(x)\\
		&\le \tau(1-\tau)\lambda {\Lip}^2(\bar{y}_G)||u_1-u_0||^2_{W^{1,2}(X,d\mu)}.
		\end{align*}
		This contradicts the uniformly concavity of $\pmb \Pi$.
\end{proof}

A similar argument implies the following equivalence. Theorem $\ref{convex set}$ and Corollary $\ref{Cor:concave2}$ together imply that the principal's profit $\pmb \Pi$ is a uniformly concave functional on a convex space, under assumptions \Gzero-\Gsix, $\mu\ll \mathcal{L}^m$, and $(v)'$.  

\begin{corollary}\label{Cor:concave2}
	Under the same assumptions as in Theorem \ref{maintheorem2},  the following are equivalent:\\
	$(v)'$ Uniform concavity of $\pi$  (in the sense of equation (\ref{uniformconvavity})) along G-segments $(x, y_t(x), z_t(x))$ whose 
	endpoints satisfy $\min\{G(x, y_0(x), z_0(x)),G(x,y_1(x),z_1(x))\} \ge u_{\emptyset}(x)$;\\
	$(vi)'$ $\pmb \Pi(u)$ is uniformly concave  in $\mathcal{U}_{\emptyset} \subset W^{1,2}(X,d\mu)$   uniformly
for all $\mu\ll \mathcal{L}^m$. \\
\end{corollary}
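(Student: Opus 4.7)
The plan is to adapt the proof of Theorem \ref{maintheorem2} to the participation-constrained subset $\mathcal{U}_{\emptyset}$, noting that $\mathcal{U}_{\emptyset}$ is convex as the intersection of the convex set $\mathcal{U}$ (Theorem \ref{convex set}) with the half-space $\{u \ge u_{\emptyset}\}$. For $(v)' \Rightarrow (vi)'$, fix $u_0, u_1 \in \mathcal{U}_{\emptyset}$ and set $u_t := (1-t)u_0 + t u_1$. For $\mu$-a.e.\ $x \in X$, hypothesis \Gone\ recovers $(y_i(x), z_i(x)) \in cl(Y \times Z)$ from $(G_x, G)(x, y_i(x), z_i(x)) = (Du_i, u_i)(x)$; since $u_i(x) \ge u_{\emptyset}(x)$, the endpoints of the corresponding pointwise $G$-segment satisfy the hypothesis of $(v)'$, so \eqref{uniformconvavity} applies pointwise. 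Integrating against $d\mu$ and invoking the uniform Lipschitz bound on $(G_x,G)^{-1}$ supplied by \Gsix\ (or the uniform bi-Lipschitz hypothesis of \Gone\ when $\bar z = +\infty$) yields uniform concavity of $\pmb\Pi$ on $\mathcal{U}_{\emptyset}$ with constant $\tilde\lambda := \lambda / \Lip^2(G_x, G)$, mirroring the proof of $(v) \Rightarrow (vi)$ in Theorem \ref{maintheorem2}.

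For $(vi)' \Rightarrow (v)'$ I would argue by contradiction: if $(v)'$ fails then for every $\lambda > 0$ some $G$-segment at some $x^0 \in X$ with endpoints $G(x^0, y_i(x^0), z_i(x^0)) \ge u_{\emptyset}(x^0)$ and some $\tau \in (0,1)$ violate \eqref{uniformconvavity}. The test utilities $u_i(x) := G(x, y_i(x^0), z_i(x^0))$ used in Theorem \ref{maintheorem2} need not satisfy $u_i \ge u_{\emptyset}$ globally, so I would replace them by $\tilde u_i(x) := \max\{G(x, y_i(x^0), z_i(x^0)),\, u_{\emptyset}(x)\}$, which is $G$-convex (a pointwise maximum of $G$-convex functions is $G$-convex, since a support of the larger summand at a point also supports the max) and manifestly lies in $\mathcal{U}_{\emptyset}$. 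When the endpoint participation inequalities are strict, continuity yields $\varepsilon > 0$ with $\tilde u_i \equiv G(\cdot, y_i(x^0), z_i(x^0))$ throughout $B_\varepsilon(x^0)$, so $\tilde u_t := (1-t)\tilde u_0 + t \tilde u_1$ agrees there with $G(\cdot, y_t(\cdot), z_t(\cdot))$ along the $G$-segment; choosing $d\mu$ uniform on $B_\varepsilon(x^0)$ and running the Theorem \ref{maintheorem2} calculation then contradicts $(vi)'$.

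The main obstacle is the boundary case where $G(x^0, y_i(x^0), z_i(x^0)) = u_{\emptyset}(x^0)$ for some $i$, because the max construction collapses near $x^0$ and we lose access to the $G$-segment structure on a neighbourhood. I would resolve this by perturbing the endpoints slightly: \Gfour\ ensures $G_z < 0$, so replacing $z_i(x^0)$ by a slightly smaller value (when $z_i(x^0) > \underbar z$) strictly raises $G(x^0, y_i(x^0), \cdot)$ above $u_{\emptyset}(x^0)$, while continuity of $\pi$ and continuous dependence of $G$-segments on their endpoints (from \Gone--\Gtwo) preserve the strict violation of \eqref{uniformconvavity} with a slightly relaxed rate, reducing the problem to the strict case. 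Any residual obstruction where $z_i(x^0) = \underbar z$ can be dodged by perturbing $y_i$ instead, exploiting that the failure of \eqref{uniformconvavity} is an open condition in the endpoint data.
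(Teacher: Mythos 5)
Your overall strategy is the one the paper intends---it offers no proof of this corollary beyond the phrase ``a similar argument'' pointing back to Theorem \ref{maintheorem2}---and your forward direction $(v)'\Rightarrow(vi)'$ is exactly that argument, with the correct observation that the endpoints recovered via \Gone\ from $u_0,u_1\in\mathcal{U}_{\emptyset}$ automatically satisfy $G(x,y_i(x),z_i(x))=u_i(x)\ge u_{\emptyset}(x)$, so the restricted hypothesis $(v)'$ suffices. Your reverse direction also supplies a repair that is genuinely needed and that the paper glosses over: the test functions $G(\cdot,y_i(x^0),z_i(x^0))$ need not dominate $u_{\emptyset}$ globally, and your replacement $\tilde u_i=\max\{G(\cdot,y_i(x^0),z_i(x^0)),\,u_{\emptyset}\}$ is legitimate ($u_{\emptyset}=G(\cdot,y_{\emptyset},z_{\emptyset})$ is itself $G$-convex, a pointwise maximum of $G$-convex functions is $G$-convex by the support argument you give, and convexity of $\mathcal{U}$ under \Gthree\ keeps $\tilde u_t$ in $\mathcal{U}_{\emptyset}$). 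When the participation inequalities are strict at $x^0$, the localization to $B_\varepsilon(x^0)$ and the uniform measure there reproduce the Theorem \ref{maintheorem2} computation verbatim, since $\tilde u_i\equiv G(\cdot,y_i(x^0),z_i(x^0))$ on the support of $\mu$.

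The one soft spot is your final fallback in the boundary case. Lowering $z_i(x^0)$ does restore strict participation by \Gfour, and the failure of \eqref{uniformconvavity} is indeed open in the endpoint data, since $G$-segments depend continuously on their endpoints by \Gone--\Gtwo\ and $\pi$ is continuous. But when $z_i(x^0)=\underbar z$ you propose ``perturbing $y_i$ instead,'' and nothing guarantees that any admissible perturbation of $y_i$ raises $G(x^0,\cdot,\underbar z)$ above $u_{\emptyset}(x^0)$: if $(y_i(x^0),\underbar z)$ happens to be a local maximum of $(y,z)\mapsto G(x^0,y,z)$ over $cl(Y\times Z)$ at exactly the level $u_{\emptyset}(x^0)$, then no nearby endpoint participates strictly, the max construction collapses on every neighbourhood of $x^0$, and openness of the violation is of no help because there is nothing admissible to perturb toward. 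This is a degenerate corner that the paper itself never confronts, but as written your claim that it ``can be dodged'' is an assertion rather than a proof; one would need either to rule out this configuration by hypothesis or to treat it by a separate limiting argument. Everything else in the proposal is sound.
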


The preceding concavity results also have convexity analogs.  Unlike strict concavity,   strict convexity does not
imply uniqueness of the principal's profit-maximizing strategy, though it suggests it should only be attained at 
extreme points of the strategy space $\mathcal U$,  where extreme point needs to be interpreted appropriately.

\begin{remark}[Convexity of principal's objective]\label{remarkmaintheorem1}
If $\pi$ and $G$ satisfy \Gzero--\Gfive,  the equivalences 
$(i) \Leftrightarrow (ii)$ and $(i)' \Leftrightarrow (ii)'$ and implications $(iii) \Rightarrow (iv)$ and $(iii)' \Rightarrow (iv)'$ remain true when all occurences of concavity are replaced by convexity.
Similarly,  the equivalences $(v) \Leftrightarrow (vi)$ and $(v)' \Leftrightarrow (vi)'$ remain true when both
occurences of uniform concavity are replaced by uniform convexity in Theorem \ref{maintheorem2}. 
\end{remark}

We close with several examples,  which are established by computing two derivatives
of $\pi(x,y_t,z_t)$ along an arbitrary $G$-segment $t\in [0,1] \longmapsto (x,y_t,z_t)$.
These computations are tedious but straightforward.

 Assuming \Gsix, we denote $(\bar{G}_{\bar{x}, \bar{y}})^{-1}$ the left inverse of $D_{\bar{x},\bar{y}}(\bar{G})(x,x_0,y,z)$.
We will use Einstein notation for simplifying expressions including summations of vectors, matrices, and general tensors for higher order derivatives. There are essentially three rules of Einstein summation notation, namely: 1. repeated indices are implicitly summed over; 2. each index can appear at most twice in any term; 3.~both sides of an equation must contain the same non-repeated indices. For example, $a_{ij}v_i =\sum_{i}a_{ij}v_i$, $a_{ij}b^{kj}v_k=\sum_{j}\sum_{k}a_{ij}b^{kj}v_k$. We also use comma to separate subscripts: the subscripts before comma represent derivatives with respect to first variable and those after comma represent derivatives with respect to second variable. For instance, for $b=b(x,y)$, $b_{,kl}$ represents second derivatives with respect of $y$ only. And for $G=G(x,y,z)$, where $z\in \R$, $G_{i,jz}$ denotes third order derivatives with respect of $x$, $y$ and $z$, instead of using another comma to separate subscripts corresponding to $y$ and $z$. Starting from now, for subscripts, we use $i,k,j,l, \alpha, \beta$ denoting integers  from either $\{1,...,m\}$ or $\{ 1,..., n\}$, and $\bar{i},\bar{k},\bar{j},\bar{l}$ denoting augmented indices from $\{1,...,m+1\}$ or $ \{1, ..., n+1\}$. For instance, $\pi_{i,}$ denotes first order derivative with respect to $x$ only, $\pi_{,\bar{k}\bar{j}}$ represents Hessian matrix with respect to $\bar{y}$ only, and $\bar{G}_{\bar{i},\bar{k}\bar{j}}$ denotes a third order derivative tensor which can be viewed as taking $\bar{x}$-derivative of $\bar{G}_{,\bar{k}\bar{j}}$.

The following remark reformulates concavity of $\pi$ on $G$-segments using non-positive definiteness of a matrix. This equivalent form provides a simple method to verify concavity condition stated in Theorem $\ref{maintheorem}$. We will apply this matrix form to establish Corollary $\ref{bar{G}^*-Concavity}$ and Example $\ref{general example1} - \ref{general example3}$.

\begin{lemma}[Characterizing concavity of principal's profit in the smooth case]\label{LemmaProfitConcavity}
	When $G \in C^3(cl(X\times Y \times Z))$ satisfies \Gzero-\Gsix \ and  $\pi \in C^2(cl(X\times Y \times Z))$, then differentiating $\pi$ along an arbitrary $G$-segment $t \in[0,1] \longrightarrow (x,y_t,z_t)$ yields
	\begin{equation}\label{pi second}
		\frac{d^2}{dt^2} \pi(x, y_t, z_t) = (\pi_{,\bar{k}\bar{j}}- \pi_{,\bar{l}} \bar{G}^{\bar i,\bar l}\bar{G}_{\bar{i},\bar{k}\bar{j}}) \dot {\bar y}^{\bar k} \dot {\bar y}^{\bar j}
	\end{equation}
where $\bar{G}^{\bar i,\bar l}$ denotes the left inverse of the matrix $\bar{G}_{\bar{i}, \bar{k} }$
and $\dot {\bar y}^{\bar k} = (\frac{d}{dt})\bar y^{\bar k}_t$.
Thus $(i)$ in Theorem $\ref{maintheorem}$  is equivalent to non-positive  definiteness of the quadratic form 
$\pi_{,\bar{k}\bar{j}}- \pi_{,\bar{l}} \bar{G}^{\bar i,\bar l}\bar{G}_{\bar{i},\bar{k}\bar{j}}$
on $T_{\bar y}  (Y\times Z) = \R^{n+1}$, { for each  $(x, \bar y) \in X \times Y \times Z$.} Similarly, Theorem \ref{maintheorem2} $(v)$ is equivalent to uniform negative definiteness of the same form.
\end{lemma}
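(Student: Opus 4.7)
The strategy is to differentiate the defining relation \eqref{$G$-segment} of a $G$-segment twice in $t$, solve for the second derivative $\ddot{\bar y}_t$, and then apply the chain rule to $\pi(x, \bar y_t)$. Recast through the augmented potential $\bar G(x, x_0, y, z) = x_0 G(x, y, z)$, a direct computation gives $\bar G_{\bar x}(x_0, -1, y, z) = (-G_x, G)(x_0, y, z)$, so the definition \eqref{$G$-segment} is equivalent to the assertion that $\bar G_{\bar x}(x_0, -1, y_t, z_t)$ is affine in $t$; its second derivative in $t$ therefore vanishes identically along any $G$-segment.

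Expanding this vanishing second derivative by the chain rule in $\bar y = (y, z)$ yields the identity
\begin{equation*}
0 = \bar G_{\bar i, \bar k \bar j}\, \dot{\bar y}^{\bar k} \dot{\bar y}^{\bar j} + \bar G_{\bar i, \bar k}\, \ddot{\bar y}^{\bar k}, \qquad \bar i = 1, \ldots, m+1,
\end{equation*}
with all derivatives evaluated at $(x_0, -1, y_t, z_t)$. Hypothesis \Gsix\ guarantees that the $(m+1) \times (n+1)$ matrix $\bar G_{\bar i, \bar k}$ has rank $n+1$ and therefore admits the left inverse $\bar G^{\bar i, \bar l}$; contracting the identity above with this inverse produces the closed-form expression $\ddot{\bar y}^{\bar l} = -\bar G^{\bar i, \bar l}\, \bar G_{\bar i, \bar k \bar j}\, \dot{\bar y}^{\bar k} \dot{\bar y}^{\bar j}$. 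Applying the chain rule directly to $t \mapsto \pi(x, \bar y_t)$ gives $\frac{d^2}{dt^2}\pi(x, \bar y_t) = \pi_{,\bar k \bar j}\, \dot{\bar y}^{\bar k} \dot{\bar y}^{\bar j} + \pi_{,\bar l}\, \ddot{\bar y}^{\bar l}$, and substituting the formula for $\ddot{\bar y}^{\bar l}$ delivers \eqref{pi second}.

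For the equivalence with non-positive definiteness, condition $(i)$ of Theorem \ref{maintheorem} asserts $\pi$ is concave along every $G$-segment, which amounts to non-positivity of the second derivative at every point of every such segment. Using \Gone\ and \Gtwo, every direction $\dot{\bar y} \in T_{\bar y}(Y \times Z) = \R^{n+1}$ arises as the initial velocity of some $G$-segment issuing from $(x, \bar y)$: since $(y, z) \mapsto (G_x, G)(x, y, z)$ is a homeomorphism onto its convex range, small perturbations of the $(G_x, G)$-image at $(y, z)$ produce $G$-segments of arbitrary initial velocity. Hence non-positivity of $\frac{d^2}{dt^2}\pi(x, \bar y_t)|_{t = 0}$ as $\dot{\bar y}$ ranges over $\R^{n+1}$ is equivalent to non-positive definiteness of the quadratic form $\pi_{,\bar k \bar j} - \pi_{,\bar l}\, \bar G^{\bar i, \bar l}\, \bar G_{\bar i, \bar k \bar j}$ on $T_{\bar y}(Y \times Z)$. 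The uniform statement for Theorem \ref{maintheorem2}$(v)$ follows by the same argument with $\le 0$ replaced throughout by $\le -\lambda |\dot{\bar y}|^2$.

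The only delicate points are careful index bookkeeping in the augmented formulation and tracking the sign induced by $\bar G_{\bar x}|_{x_0 = -1} = (-G_x, G)$; otherwise the computation is entirely routine and presents no substantive obstacle.
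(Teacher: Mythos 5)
Your proposal is correct and follows essentially the same route as the paper: differentiate the $G$-segment relation (equivalently, the affine dependence of $(G_x,G)(x,y_t,z_t)$ on $t$) twice, solve for $\ddot{\bar y}$ using the left inverse supplied by \Gsix, substitute into the chain-rule expansion of $\frac{d^2}{dt^2}\pi$, and then use the fact that every tangent direction in $\R^{n+1}$ is realized by some $G$-segment through $(x,\bar y)$ to convert pointwise non-positivity into non-positive definiteness of the quadratic form. The only difference is cosmetic — you phrase the segment condition via $\bar G_{\bar x}$ at $x_0=-1$ rather than via $(G_x,G)$ directly — and your justification of the ``all tangent directions arise'' step is, if anything, slightly more explicit than the paper's.
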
	



%


Before we state another result, we need the following definition, which is a generalized Legendre transform 
(see Moreau \cite{Moreau70}, Kutateladze-Rubinov \cite{KutateladzeRubinov72}, Elster-Nehse \cite{ElsterNehse74}, Balder \cite{Balder77}, Dolecki-Kurcyusz \cite{DoleckiKurcyusz78}, Gangbo-McCann\cite{GangboMcCann96}, Singer\cite{Singer97}, Rubinov\cite{Rubinov00a, Rubinov00b}, and Martínez-Legaz \cite{MartinezLegaz05} for more references).  


\begin{definition}[$\bar{G}$-concavity, $\bar{G}^*$-concavity]\label{(-bar{G})-convexity}
	A function $\phi: cl(X \times X_0) \longrightarrow \R$ is called $\bar{G}$-concave if $\phi = (\phi^{\bar{G}^*})^{\bar{G}}$ and a function $\psi: cl( Y \times Z) \longrightarrow \R$ is called $\bar{G}^*$-concave if $\psi = (\psi^{\bar{G}})^{\bar{G}^*}$, where 
	\begin{equation}\label{(-bar{G})-transform}
	\psi^{\bar{G}}(\bar{x})=\min_{\bar{y} \in cl( Y \times Z)} \bar{G}(\bar{x},\bar{y}) - \psi(\bar{y}) \text{ and } \phi^{\bar{G}^*}(\bar{y})= \min_{\bar{x} \in cl(X \times X_0)} \bar{G}(\bar{x}, \bar{y}) - \phi(\bar{x}).
	\end{equation}
	We say $\psi$ is strictly $\bar{G}^*$-concave, if in addition $\psi^{\bar G} \in C^1(X\times X_0)$. 
\end{definition}

Note that,  apart from an overall sign and the extra variables,
Definition \ref{(-bar{G})-convexity} coincides with a quasilinear version $G(\bar{x},\bar{y},z) = \bar{G}(\bar{x},\bar{y})-z$ of
Definition \ref{$G$-convexity}.
%

The following corollary characterizes the concavity of principal's profit when her utility on one hand is not influenced by the agents' identity, and on the other hand has adequate generality to  encompass a
tangled nonlinear relationship between products and selling prices. It generalizes the convexity result in Figalli-Kim-McCann \cite{FigalliKimMcCann11}, where $G(x,y,z) = b(x,y)-z$ and $\pi(x,y,z) = z-a(y)$.  See Appendix \ref{A:B} for a (partial) converse.


\begin{corollary}[Concavity of principal's objective with her utility not depending on agents' types]\label{bar{G}^*-Concavity}
	If $G \in C^3(cl(X\times Y \times Z)
	)$ satisfies \Gzero-\Gsix,  $\pi \in C^2( cl( Y \times Z)
	)$ is $\bar{G}^*$-concave and $\mu\ll \mathcal{L}^m$, then $\pmb \Pi$ is concave. 
\end{corollary}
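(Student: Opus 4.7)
The plan is to invoke Theorem \ref{maintheorem}, reducing the claim to concavity of $\pi$ along every $G$-segment, which by Lemma \ref{LemmaProfitConcavity} amounts to showing that at each $(x,\bar y)\in X\times Y\times Z$ the quadratic form
\[
Q(\dot{\bar y}) := \bigl(\pi_{,\bar k\bar j}(\bar y) - \pi_{,\bar l}(\bar y)\,\bar G^{\bar i,\bar l}(x,-1,\bar y)\,\bar G_{\bar i,\bar k\bar j}(x,-1,\bar y)\bigr)\dot{\bar y}^{\bar k}\dot{\bar y}^{\bar j}
\]
is non-positive on $\R^{n+1}$. The $\bar G^*$-concavity of $\pi$ will supply a smooth tangent upper envelope at each $\bar y_0$, while \Gthree, acting through the negative multiplier built into the augmentation $\bar G(x,x_0,y,z)=x_0 G(x,y,z)$, will bound the analogous quantity with $\pi$ replaced by that envelope.

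First I would extract the envelope. Fix any interior $\bar y_0\in Y\times Z$. Since $cl(X\times X_0)$ is compact and the relevant integrands continuous, the infimum $\pi(\bar y_0)=\min_{\bar x\in cl(X\times X_0)}\bar G(\bar x,\bar y_0)-\pi^{\bar G}(\bar x)$ is attained at some $\bar x^*=(x^*,x_0^*)$ with $x_0^*\le 0$. The function $F(\bar y):=\bar G(\bar x^*,\bar y)-\pi^{\bar G}(\bar x^*)-\pi(\bar y)$ is $C^2$, non-negative on $cl(Y\times Z)$, and vanishes at the interior point $\bar y_0$; hence
\[
\pi_{,\bar l}(\bar y_0)=\bar G_{,\bar l}(\bar x^*,\bar y_0), \qquad \pi_{,\bar k\bar j}(\bar y_0)\le \bar G_{,\bar k\bar j}(\bar x^*,\bar y_0)
\]
as $(n+1)\times(n+1)$ symmetric matrices.

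Next I would combine \Gthree\ with the sign $x_0^*\le 0$. Differentiating the defining relation $\bar G_{\bar x}(x,-1,\bar y_t)=(-G_x,G)(x,\bar y_t)$ of the $G$-segment twice in $t$ (it is affine, so its second derivative vanishes) and inverting the Jacobian via \Gsix\ yields
\[
\ddot{\bar y}^{\bar l} = -\bar G^{\bar i,\bar l}(x,-1,\bar y_t)\,\bar G_{\bar i,\bar k\bar j}(x,-1,\bar y_t)\,\dot{\bar y}^{\bar k}\dot{\bar y}^{\bar j}.
\]
Since \Gthree\ asserts that $t\mapsto G(x^*,\bar y_t)$ is convex and $x_0^*\le 0$, the map $t\mapsto \bar G(\bar x^*,\bar y_t)=x_0^*G(x^*,\bar y_t)$ is concave in $t$. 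Computing its second derivative by the chain rule and substituting the formula for $\ddot{\bar y}$ gives
\[
0\ge\frac{d^2}{dt^2}\bar G(\bar x^*,\bar y_t)\Big|_{t=0} = \bigl(\bar G_{,\bar k\bar j}(\bar x^*,\bar y_0) - \bar G_{,\bar l}(\bar x^*,\bar y_0)\,\bar G^{\bar i,\bar l}(x,-1,\bar y_0)\,\bar G_{\bar i,\bar k\bar j}(x,-1,\bar y_0)\bigr)\dot{\bar y}^{\bar k}\dot{\bar y}^{\bar j}.
\]

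Finally, substituting the envelope identity $\pi_{,\bar l}(\bar y_0)=\bar G_{,\bar l}(\bar x^*,\bar y_0)$ and the Hessian inequality $\pi_{,\bar k\bar j}(\bar y_0)\le\bar G_{,\bar k\bar j}(\bar x^*,\bar y_0)$ into $Q(\dot{\bar y})$ yields $Q(\dot{\bar y})\le\frac{d^2}{dt^2}\bar G(\bar x^*,\bar y_t)|_{t=0}\le 0$, delivering the required non-positivity and hence, via Lemma \ref{LemmaProfitConcavity} and Theorem \ref{maintheorem}, concavity of $\pmb\Pi$. The main obstacle is the decoupling between the $G$-segment base point $x$ (which controls $\ddot{\bar y}$ through derivatives at $(x,-1,\cdot)$) and the envelope contact $\bar x^*$ (depending only on $\bar y_0$); the calculation above remains informative precisely because the envelope identity $\bar G_{,\bar l}(\bar x^*,\bar y_0)=\pi_{,\bar l}(\bar y_0)$ allows one to trade evaluations at $\bar x^*$ for evaluations at $(x,-1)$. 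A subsidiary issue --- existence of the minimizer $\bar x^*$ and non-positivity of $x_0^*$ --- is handled by the compactness of $cl(X\times X_0)\subset X\times(-\infty,0)$ built into the definition of $\bar G$.
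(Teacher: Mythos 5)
Your proposal is correct and follows essentially the same route as the paper's proof: reduce to non-positivity of the quadratic form $\pi_{,\bar k\bar j}-\pi_{,\bar l}\bar G^{\bar i,\bar l}\bar G_{\bar i,\bar k\bar j}$ via Lemma \ref{LemmaProfitConcavity}, extract the supporting contact point $\bar x^*=(x^*,x_0^*)$ from $\bar G^*$-concavity to get the first- and second-order envelope conditions, eliminate $\ddot{\bar y}$ using the $G$-segment relation, and conclude from $x_0^*\le 0$ together with \Gthree. The chain of inequalities you obtain is precisely the paper's \eqref{Eqn:proofcor}.
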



For specific non-quasilinear agent preferences,  we use the explicit expression above for the 
desired second derivative to establish the following examples,  which assume the principal is indifferent to
whom she transacts business with and that her preferences depend linearly on payments.  
These examples give conditions under which the principal's program inherits concavity or convexity 
from the agents' price sensitivity.
Although the resulting conditions appear complicated,  they illustrate
the subtle interplay between the preferences of agent and principal for products in the first example,
and between the preferences of the agents for products as opposed to prices in the second. 

\begin{example}[Nonlinear yet homogeneous sensitivity of agents to prices]
\label{general example1}
	Take $\pi(x, y, z) =z- a(y)$, $G(x, y, z) = b(x,y)-f(z)$, satisfying \Gzero-\Gsix,  $G \in C^3(cl(X\times Y \times Z)
	)$, $\pi \in C^2(cl(X\times Y \times Z)
	)$, and assume $\bar{z}<+\infty$.\\
	1. If $f(z)$ is convex [respectively concave] in $cl(Z)$, then $\pmb \Pi(u)$ is concave [respectively convex] for all $\mu\ll \mathcal{L}^m$ if and only if there exist $\varepsilon \ge 0$ such that each $(x,y,z) \in X \times Y\times Z$ and $\xi \in \R^{n}$ satisfy 
	\begin{equation}\label{robert1}
		\pm \Bigg\{a_{kj}(y)-\frac{b_{,kj}(x,y)}{f'(z)}+\Big(\frac{b_{,l}(x,y)}{f'(z)}- a_l(y)\Big)b^{i,l}(x,y) b_{i,kj}(x,y)\Bigg\} \xi^{k}\xi^{j} \ge  \varepsilon \mid \xi\mid ^2.
	\end{equation}
	2. In addition, $\pmb \Pi(u)$ is uniformly concave [respectively uniformly convex] on $W^{1,2}(X,d\mu)$  uniformly for all $\mu\ll \mathcal{L}^m$ if and only if $\pm f''> 0$ and  \eqref{robert1} holds with $\varepsilon >0$.\\
\end{example}

Although 
the next two examples are not completely general, they have the following economic interpretation. The same selling price impacts utility differently for different types of agents. In other words, it models the situation where agents have different sensitivities to the same price.  In Example $\ref{general example2}$,  the principal's utility is linear and depends exclusively on her revenue, which is a simple special case of Example $\ref{general example3}$.

\begin{example}[Inhomogeneous sensitivity of agents to prices, zero cost]\label{general example2}
	Take $\pi(x, y, z) =z$,  $G(x,y,z)$ $= b(x,y)-f(x,z)$,  satisfying \Gzero--\Gsix,  $G \in C^3(cl(X\times Y \times Z)
	)$, $\pi \in C^2(cl(X\times Y \times Z)
	)$, and assume $\bar{z}<+\infty$. Suppose $D_{x,y}b(x,y)$ has full rank for each $(x,y) \in X\times Y$, and denote its left inverse $b^{i,l}(x,y).$\\
	1. If $(x,y,z)\longmapsto h(x,y,z):=f(x,z)-b_{,l}(x,y)b^{i,l}(x,y)f_{i,}(x,z)$ is strictly increasing and convex [respectively concave] with respect to $z$, then $\pmb \Pi(u)$ is concave [respectively convex]  for all $\mu\ll \mathcal{L}^m$ if and only if there exist $\varepsilon \ge 0$ such that each $(x,y) \in X \times Y$ and $\xi \in \R^{n}$ satisfy
\begin{equation}\label{robert2}
\pm \Big\{-b_{,kj}(x,y)+b_{,l}(x,y)b^{i,l}(x,y) b_{i,kj}(x,y)\Big\} \xi^{k}\xi^{j} \ge  \varepsilon \mid \xi\mid ^2.
\end{equation}
	2. In addition, $\pmb \Pi(u)$ is uniformly concave [respectively uniformly convex] on $W^{1,2}(X,d\mu)$ uniformly for all $\mu\ll \mathcal{L}^m$ if and only  if $\pm h_{zz}> 0$ and \eqref{robert2} holds with $\varepsilon >0$.\\

\end{example}

\begin{example}[Inhomogeneous sensitivity of agents to prices]\label{general example3}
	Take $\pi(x, y, z) =z-a(y)$,  $G(x,y,z)= b(x,y)-f(x,z)$,  satisfying \Gzero-\Gsix,  $G \in C^3(cl(X\times Y \times Z)
	)$, $\pi \in C^2(cl(X\times Y \times Z)
	)$, and assume $\bar{z}<+\infty$. Suppose $D_{x,y}b(x,y)$ has full rank for each $(x,y) \in X\times Y$, and $1- (f_{z})^{-1}b_{,\beta}b^{\alpha,\beta}f_{\alpha,z} \ne 0$, for all $(x, y,z) \in X\times Y\times Z$.\\
	1. If $(x,y,z)\longmapsto h(x,y,z):=a_{l}b^{i,l}f_{i,zz}+\frac{[a_{\beta}b^{\alpha,\beta}f_{\alpha,z}-1][b_{,l}b^{i,l}f_{i,zz}-f_{zz}]}{f_{z} -b_{,\beta}b^{\alpha,\beta}f_{\alpha,z}} \ge 0 [\le 0]$ , then $\pmb \Pi(u)$ is concave  [respectively convex] for all $\mu\ll \mathcal{L}^m$ if and only if there exist $\varepsilon \ge 0$ such that each $(x,y,z) \in X \times Y\times Z$ and $\xi \in \R^{n}$ satisfy 
	\begin{equation}\label{robert3}
	\pm \Bigg\{a_{kj} -a_{l}b^{i,l}b_{i,kj}+\frac{1-a_{\beta}b^{\alpha,\beta}f_{\alpha,z}}
	{1- (f_{z})^{-1}b_{,\beta}b^{\alpha,\beta}f_{\alpha,z}}
	\Big[-\frac{b_{,kj}}{f_z}+\frac{b_{,l}}{f_z}b^{i,l} b_{i,kj}\Big] \Bigg\}\xi^{k}\xi^{j} \ge  \varepsilon \mid \xi\mid ^2.
	\end{equation}
	2. If in addition, $\pmb \Pi(u)$ is uniformly concave [respectively uniformly convex] on $W^{1,2}(X,d\mu)$ uniformly for all $\mu\ll \mathcal{L}^m$ if and only if $\pm h>0$ and  \eqref{robert3} holds with $\varepsilon>0$.\\
\end{example}

Example \ref{general example 4} asserts the concavity of monopolist's maximization in the zero-sum setting,
where the agent's utilities are relatively general but the principal's profit is extremely special. In addition, more non-quasilinear examples could be discovered by applying Lemma $\ref{LemmaProfitConcavity}$.

\begin{example}[Zero sum transactions]\label{general example 4}
	Take $\pi(x, y, z) = -G(x,y,z)$, satisfying \Gzero-\Gfive\ and $\mu\ll \mathcal{L}^m$, which means the monopolist's profit in each transaction coincides exactly with the 
agent's loss. From $(\ref{$G$-segment})$, since $G$ is linear on $G$-segments, we know $\pmb \Pi(u)$ is linear.
\end{example}

\bigskip

\section{Proofs}

In this section, we prove
Lemma $\ref{LemmaProfitConcavity}$, Corollary $\ref{bar{G}^*-Concavity}$, 
Example $\ref{general example1}$ and $\ref{general example3}$.

\begin{proof}[Proof of Lemma \ref{LemmaProfitConcavity}]
	For any G-segments $(x, y_t, z_t)$ satisfying equation (\ref{EqnG-segments}) and $\pi \in C^2(cl(X\times Y \times Z)
	)$,
	$t\in[0,1] \longmapsto \pi(x, y_t ,z_t)$ is concave [uniformly concave] if and only if $\frac{d^2}{dt^2} \pi(x, y_t, z_t)\le  0$ $[\le -\lambda ||(\dot{y}_t,\dot{z}_t)||^2_{\R^{n+1}}<0]$, for all $t \in [0,1]$.\\
	On the one hand, since $\frac{d}{dt}\pi(x, y_t, z_t) = \pi_{,\bar{k}} \dot{\bar y}^{\bar{k}}$, taking another derivative with respect of $t$ gives 
	\begin{equation}\label{EqnSecondDiffProfit}
		\frac{d^2}{dt^2} \pi(x, y_t, z_t) = 
\pi_{,\bar{k}\bar{j}}\dot{\bar y}^{\bar{k}} \dot{\bar y}^{\bar{j}}+ \pi_{,\bar{l}}\ddot {\bar y}^{\bar{l}}
	\end{equation}
	On the other hand, taking second derivative with respect of t at both sides of equation (\ref{EqnG-segments}), which is equivalent to $\bar{G}_{\bar{i},}(x, x_0, y_t(x), z_t(x)) = (x_0Du_t, u_t)(x)$, for some fixed $x_0\in X_0$, implies
	\begin{equation}\label{EqnSecondDiffbar{G}}
		\bar{G}_{\bar{i},\bar{k}\bar{j}}\dot{\bar y}^{\bar{k}} \dot {\bar y}^{\bar{j}}+\bar{G}_{\bar{i},\bar{k}} \ddot{\bar y}^{\bar{k}} =0 
	\end{equation}
	Combining equations (\ref{EqnSecondDiffProfit}) with (\ref{EqnSecondDiffbar{G}}) yields
\eqref{pi second}.
For $x \in X$, there is a $G$-segment with any given tangent direction through $\bar y = (y,z) \in Y \times Z$.
Thus, the non-positivity  of $\frac{d^2}{dt^2} \pi(x, y_t, z_t)$ along all G-segments $(x, y_t, z_t)$ is equivalent to non-positive  definiteness of the matrix $(\pi_{,\bar{k}\bar{j}}- \pi_{,\bar{l}} \bar{G}^{\bar i,\bar l}\bar{G}_{\bar{i},\bar{k}\bar{j}})$ on $T_{\bar y} (Y\times Z)=\R^{n+1}$.
	
	In addition, the uniformly concavity of $\pi(x, y_t, z_t)$ along all G-segments $(x, y_t, z_t)$ is equivalent to uniform negative definiteness of $(\pi_{,\bar{k}\bar{j}}- \pi_{,\bar{l}} \bar{G}^{\bar i,\bar l}\bar{G}_{\bar{i},\bar{k}\bar{j}})$ on $\R^{n+1}$.
\end{proof}

\begin{proof}[Proof of Corollary \ref{bar{G}^*-Concavity}]
	According to Lemma \ref{LemmaProfitConcavity}, for concavity, we only need to show non-positive definiteness of $(\pi_{\bar{k}\bar{j}}- \pi_{\bar{l}} \bar{G}^{\bar i,\bar l}\bar{G}_{\bar{i},\bar{k}\bar{j}})$ on $\R^{n+1}$, i.e., for any $\bar{x} = (x,x_0) \in X \times X_0$,  $\bar{y} \in Y\times Z$ and $\xi \in \R^{n+1} $, $\big(\pi_{\bar{k}\bar{j}}(\bar{y})- \pi_{\bar{l}}(\bar{y}) \bar{G}^{\bar i,\bar l}(\bar{x}, \bar{y})\bar{G}_{\bar{i},\bar{k}\bar{j}}(\bar{x}, \bar{y})\big)\xi^{\bar{k}}\xi^{\bar{j}} \le 0$.\\
	For any fixed $\bar{x} = (x, x_0) \in X \times X_0$, $\bar{y} \in Y\times Z$, $\xi \in \R^{n+1}$, there exist $\delta >0$ and $t \in (-\delta, \delta) \longmapsto \bar{y}_t \in Y\times Z$, such that $\bar{y}_t|_{t=0} =\bar{y}$,  $\dot{\bar{y}}|_{t=0} = \xi$ and $\frac{d^2}{dt^2} \bar{G}_{\bar{i}, }(\bar{x}, \bar{y}_t) = 0$. Thus, 
	\begin{equation}\label{EqnSecondDiffbar{G}2}
		0 = \frac{d^2}{dt^2}\bigg|_{t=0}\bar{G}_{\bar{i},}(\bar{x}, \bar{y}_t) = \bar{G}_{\bar{i}, \bar{k}\bar{j}}(\bar{x}, \bar{y}) \xi^{\bar{k}}\xi^{\bar{j}} + \bar{G}_{\bar{i}, \bar{k}}(\bar{x}, \bar{y}) (\ddot{\bar{y}}_t)^{\bar{k}}\Big|_{t=0}
	\end{equation}
	Since $\pi$ is $\bar{G}^*$-concave, we have $\pi(\bar{y}) =\min_{\tilde{x} \in cl(X \times X_0)} \bar{G}(\tilde{x}, \bar{y}) - \phi(\tilde{x})$, for some $\bar{G}$-concave function $\phi$. Since $cl(X \times X_0)$ is compact, for this $\bar{y}$, there exists ${\bar{x}}^* = ({x}^*, {x_0}^*) \in cl(X \times X_0)$, such that 
	 $\pi_{\bar{l}}(\bar{y}) = \bar{G}_{,\bar{l}}({\bar{x}}^*, \bar{y})$ for each $\bar{l} = 1, 2,..., n+1$ and $\pi_{\bar{k}\bar{j}}(\bar{y})\xi^{\bar{k}}\xi^{\bar{j}} \le \bar{G}_{,\bar{k}\bar{j}}({\bar{x}^*}, \bar{y})\xi^{\bar{k}}\xi^{\bar{j}}$  for each $\xi \in \R^{n+1}$. 
Combined with (\ref{EqnSecondDiffbar{G}2}) this yields
	\begin{flalign}\label{Eqn:proofcor}
	\begin{aligned}
		&\big(\pi_{\bar{k}\bar{j}}(\bar{y})- \pi_{\bar{l}}(\bar{y}) \bar{G}^{\bar i,\bar l}(\bar{x}, \bar{y})\bar{G}_{\bar{i},\bar{k}\bar{j}}(\bar{x}, \bar{y})\big)\xi^{\bar{k}}\xi^{\bar{j}} \\
		&\le
		\big(\bar{G}_{,\bar{k}\bar{j}}({\bar{x}}^*, \bar{y})- \bar{G}_{,\bar{l}}({\bar{x}}^*, \bar{y})\bar{G}^{\bar i,\bar l}(\bar{x}, \bar{y})\bar{G}_{\bar{i},\bar{k}\bar{j}}(\bar{x}, \bar{y})\big)\xi^{\bar{k}}\xi^{\bar{j}} \\
		&= \bar{G}_{,\bar{k}\bar{j}}({\bar{x}}^*, \bar{y})\xi^{\bar{k}}\xi^{\bar{j}}+ \bar{G}_{,\bar{l}}({\bar{x}}^*, \bar{y})\cdot (\ddot{\bar{y}}_t)^{\bar{l}}\big|_{t=0}\\
		& = \frac{d^2}{dt^2}\bigg|_{t=0} \bar{G}({\bar{x}}^*, \bar{y}_t)\\
		& = {x_0}^* \cdot\frac{d^2}{dt^2}\bigg|_{t=0} G({x}^*, \bar{y}_t)\\
		&\le 0.
		\end{aligned}
	\end{flalign}
	The last inequality comes from ${x_0}^* \le 0$ and \Gthree.
\end{proof}

\begin{proof}[Proof of Example \ref{general example1}]
	From Lemma \ref{LemmaProfitConcavity}, $\pmb \Pi(u)$ is concave for all $\mu\ll \mathcal{L}^m$ if and only if $(\pi_{,\bar{k}\bar{j}}- \pi_{,\bar{l}} \bar{G}^{\bar i,\bar l}\bar{G}_{\bar{i},\bar{k}\bar{j}})\big|_{x_0=-1}$ is non-positive definite, and uniformly concave uniformly for all $\mu\ll \mathcal{L}^m$ if and only if this matrix is uniform negative definite.\\
	In this example, we have $\pi(x,y,z)= z-a(y)$, $\bar{G}(x,x_0, y,z) = x_{0} G(x,y,z) = x_{0}(b(x,y)-f(z))$. Thus, 
	\begin{flalign*}
		\pi_{,\bar{k}\bar{j}}= \begin{pmatrix}
		-a_{kj} & \mathbf{0}\\
		\mathbf{0} & 0\\
	    \end{pmatrix}, \ \ 
	    \pi_{,\bar{l}}= (-a_{l}, 1), \ \ 
	    \bar{G}_{\bar{i},\bar{l}}\big|_{x_0=-1} = \begin{pmatrix}
	    -b_{i,l} & \mathbf{0}\\
	    b_{,l} & -f'(z)\\
	    \end{pmatrix}.\\	    
	\end{flalign*}
	By \Gfour, $f'(z) >0$ for all $z\in cl(Z)$. By \Gsix, since $\bar{G}_{\bar{i},\bar{l}}\big|_{x_0=-1}$ has the full rank, the matrix $(b_{i,l})$ also has its full rank. Taking $b^{i,l}$ as its left inverse, we have
	\begin{flalign*}
	    \bar{G}^{\bar i,\bar l}\big|_{x_0=-1} = \begin{pmatrix}
	    -b^{i,l} & \mathbf{0}\\
	    -\frac{b_{,l}b^{i,l}}{f'(z)} & \frac{1}{-f'(z)}\\
	    \end{pmatrix}, \ \ 
	    \bar{G}_{\bar{i},\bar{k}\bar{j}}\big|_{x_0=-1} =\begin{pmatrix}
	    -b_{i,k\bar{j}}& \mathbf{0}  \\
	    b_{,k\bar{j}} & (-f'(z))_{\bar{j}}\\
	    \end{pmatrix}.
	\end{flalign*}
	Therefore, 
	\begin{flalign*}
		(\pi_{,\bar{k}\bar{j}} - \pi_{,\bar{l}}\bar{G}^{\bar i,\bar l} \bar{G}_{\bar{i},\bar{k}\bar{j}})\big|_{x_0=-1} = &\begin{pmatrix}
		-a_{kj} &\mathbf{0} \\
		\mathbf{0} &  0\\
		\end{pmatrix} - \begin{pmatrix}
		(-a_{l}b^{i,l}+\frac{b_{,l}}{f'(z)}b^{i,l}) b_{i,k\bar{j}}-\frac{b_{,k\bar{j}}}{f'(z)},\frac{(f'(z))_{\bar{j}}}{f'(z)}
		\end{pmatrix}\\
		&= - \begin{pmatrix}
		a_{kj}+(-a_{l}+\frac{b_{,l}}{f'(z)})b^{i,l} b_{i,kj}-\frac{b_{,kj}}{f'(z)} & \mathbf{0}\\
		\mathbf{0} & \frac{f''(z)}{f'(z)}\\
		\end{pmatrix}.
	\end{flalign*}
	Since \Gfour\ and $f$ is convex, we have $f'(z) >0$ and $f''(z)\ge 0$, for all $z\in cl(Z)$. Thus, $\pi_{,\bar{k}\bar{j}}-\pi_{,\bar{l}}\bar{G}^{\bar i,\bar l}\bar{G}_{\bar{i},\bar{k}\bar{j}}$ is non-positive definite if and only if $a_{kj}+(-a_{l}+\frac{b_{,l}}{f'(z)})b^{i,l} b_{i,kj}-\frac{b_{,kj}}{f'(z)}$ is non-negative definite, i.e., 
	there exist $\varepsilon \ge 0$ such that each $(x,y,z) \in X \times Y\times Z$ and $\xi \in \R^{n}$ satisfy 
	$$ \Bigg\{a_{kj}(y)-\frac{b_{,kj}(x,y)}{f'(z)}+\Big(\frac{b_{,l}(x,y)}{f'(z)}- a_l(y)\Big)b^{i,l}(x,y) b_{i,kj}(x,y)\Bigg\} \xi^{k}\xi^{j} \ge  \varepsilon \mid \xi\mid ^2.
	$$\\
	In addition, $\pi_{,\bar{k}\bar{j}}-\pi_{,\bar{l}}\bar{G}^{\bar i,\bar l}\bar{G}_{\bar{i},\bar{k}\bar{j}}$ is uniform negative definite if and only if $f''>0$ and $\varepsilon>0$, which is equivalent to that $\pmb \Pi(u)$ is uniformly concave uniformly for all $\mu\ll \mathcal{L}^m$. Similarly, one can show equivalent conditions for $\pmb \Pi(u)$ being convex or uniformly convex.
\end{proof}

\begin{proof}[Proof of Example \ref{general example3}]
	Similar to the proof of Example \ref{general example1},  $\pmb \Pi(u)$ is concave for all $\mu\ll \mathcal{L}^m$ if and only if $(\pi_{,\bar{k}\bar{j}}-\pi_{,\bar{l}}\bar{G}^{\bar i,\bar l}\bar{G}_{\bar{i},\bar{k}\bar{j}})$ is non-positive definite, and  uniformly concave uniformly for all $\mu\ll \mathcal{L}^m$ if and only if this tensor is uniform negative definite.\\
	Since $D_{x,y}b(x,y)$ has full rank for each $(x,y) \in X\times Y$, and $1- (f_{z})^{-1}b_{,\beta}b^{\alpha,\beta}f_{\alpha,z} \ne 0$, for all $(x, y,z) \in X\times Y\times Z$, for $\pi(x,y,z) = z-a(y)$, $\bar{G}(x,x_0, y, z) = x_0(b(x,y)-f(x,z))$, we have 
	\begin{flalign*}
	&(\pi_{,\bar{k}\bar{j}} - \pi_{,\bar{l}}\bar{G}^{\bar i,\bar l} \bar{G}_{\bar{i},\bar{k}\bar{j}}) \\
	&=- \begin{pmatrix}
	 \begin{split}
	 &\hspace{2cm}a_{kj} -a_{l}b^{i,l}b_{i,kj}\\
	 &+\frac{[a_{\beta}b^{\alpha,\beta}f_{\alpha,z}-1][b_{,kj}-b_{,l}b^{i,l} b_{i,kj}]}{f_{z} -b_{,\beta}b^{\alpha,\beta}f_{\alpha,z}}
	 \end{split} & \begin{split}
	 \mathbf{0}
	 \end{split}\\
	\begin{split}
	\mathbf{0}
	\end{split} & \begin{split}
	&\hspace{2.7cm}a_{l}b^{i,l}f_{i,zz}\\
	&+\frac{[a_{\beta}b^{\alpha,\beta}f_{\alpha,z}-1][b_{,l}b^{i,l}f_{i,zz}-f_{zz}]}{f_{z} -b_{,\beta}b^{\alpha,\beta}f_{\alpha,z}}
	\end{split}\\
	\end{pmatrix}.
	\end{flalign*}
	Since $h(x,y,z)\ge 0 $ , then $(\pi_{,\bar{k}\bar{j}}-\pi_{,\bar{l}}\bar{G}^{\bar i,\bar l}\bar{G}_{\bar{i},\bar{k}\bar{j}})$ is non-positive definite if and only if there exist $\varepsilon \ge 0$ such that each $(x,y,z) \in X \times Y\times Z$ and $\xi \in \R^{n}$ satisfy 
	$$ \Bigg\{a_{kj} -a_{l}b^{i,l}b_{i,kj}+\frac{[a_{\beta}b^{\alpha,\beta}f_{\alpha,z}-1][b_{,kj}-b_{,l}b^{i,l} b_{i,kj}]}{f_{z} -b_{,\beta}b^{\alpha,\beta}f_{\alpha,z}}\Bigg\} \xi^{k}\xi^{j} \ge  \varepsilon \mid \xi\mid ^2.
	$$\\
	In addition, $\pmb \Pi(u)$ is uniformly concave uniformly for all $\mu\ll \mathcal{L}^m$ if and only if $h>0$ and $\varepsilon>0$.
\end{proof}

\bigskip

\appendix

\section{A fourth-order differential re-expression of \Gthree}

In our convexity argument, hypothesis \Gthree\ plays a crucial role. In this section, we 
localize this hypothesis using differential calculus.  The resulting expression
shows it to be a direct analog of the non-negative cross-curvature (B3) from \cite{FigalliKimMcCann11},
which in turn was inspired by \cite{MaTrudingerWang05}. \\

In this section only, we assume the dimensions of spaces $X$ and $Y$ are equal, i.e., $m=n$. Before the statement, we need to extend the twist and convex range hypotheses \Gone\ and \Gtwo\  
to the function $H$ in place of $G$. This is equivalent to assuming
:

\begin{itemize}
	\item[\Gseven] For each $(y, z)\in cl( Y \times Z)$ the map $x \in X \longmapsto \frac{G_y}{G_z}(\cdot, y,z)$ is one-to-one;\\
	\item[\Geight] its range $X_{(y,z)} := \frac{G_y}{G_z}(X,y,z) \subset \R^n$ is convex.\\
\end{itemize}

For any vectors $p, w \in \R^n$, we denote $p\parallel w$ if $p$ and $w$ are parallel.

\begin{proposition}
	Assume \Gzero-\Gtwo\ and \Gfour-\Geight. If, in addition, $G\in C^4(cl(X\times Y \times Z)
	)$,  then the following statements are equivalent:\\
	\begin{enumerate}[(i)]
		\item \Gthree.
		
		\item 	 For any given  $x_0, x_1\in X$, any curve $(y_t, z_t) \in cl( Y \times Z)$ connecting $(y_0,z_0)$ and $(y_1, z_1)$, we have 
		\begin{equation*}
		\frac{\partial^2}{\partial s^2 }\Biggl(\frac1{G_z(x_s, y_t, z_t)}\frac{\partial^2}{\partial t^2} G(x_s,y_t,z_t) \Biggr)\Bigg|_{t=t_0}\le 0,
		\end{equation*}
		whenever  { $s\in [0,1] \longmapsto \frac{G_y}{G_z}(x_s, y_{t_0}, z_{t_0})$} forms an affinely parametrized line segment for some $t_0 \in [0,1]$.\\
		
		\item For any given curve $x_s\in X$ connecting $x_0$ and $x_1$,  any $(y_0, z_0),  (y_1, z_1) \in cl( Y \times Z) $, we have 
		\begin{equation*}
		\frac{\partial^2}{\partial s^2 }\Biggl(\frac{1}{G_z(x_s, y_t, z_t)}\frac{\partial^2}{\partial t^2} G(x_s,y_t,z_t) \Biggr)\Bigg|_{s=s_0}\le 0,
		\end{equation*}
		whenever $t\in [0,1] \longmapsto (G_x, G)(x_{s_0}, y_t, z_t)$  forms an affinely parametrized line segment for some $s_0\in [0,1]$.\\
		
	\end{enumerate} 
\end{proposition}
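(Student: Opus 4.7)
The strategy is to introduce the normalized convexity defect $\Psi(s,t) := \partial_t^2 G(x_s, y_t, z_t)/G_z(x_s, y_t, z_t)$ and to analyze its behavior at a ``diagonal'' parameter $s_0$ where $(y_t, z_t)$ is a $G$-segment anchored at $x_{s_0}$. Differentiating the defining identity \eqref{$G$-segment} twice in $t$ along such a $G$-segment yields $\partial_t^2 G(x_{s_0}, y_t, z_t) \equiv 0$ and $\partial_t^2 G_x(x_{s_0}, y_t, z_t) \equiv 0$ identically in $t$; the first forces $\Psi(s_0, t) \equiv 0$, while the second forces $\partial_s \Psi(s_0, t) \equiv 0$ along every smooth curve $x_s$ through $x_{s_0}$. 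Hence $s = s_0$ is a critical parameter of $s \mapsto \Psi(s,t)$ for each $t$, and the sign of $\partial_s^2 \Psi(s_0, t)$ is the leading obstruction.

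To establish $(i) \Rightarrow (iii)$ I would then invoke a global max/min argument: \Gthree\ and \Gfour\ combine to give $\partial_t^2 G \ge 0$ and $G_z < 0$, so $\Psi \le 0$ globally; together with $\Psi(s_0, t) = 0$ this makes $s_0$ a global maximum of $s \mapsto \Psi(s,t)$, and the second-derivative test delivers $\partial_s^2 \Psi(s_0, t) \le 0$, which is exactly $(iii)$. The implication $(i) \Rightarrow (ii)$ follows from the same argument after exchanging the roles of the primal variables $(y,z)$ and the dual variable $x$: hypotheses \Gseven--\Geight\ guarantee that the curves along which $s \mapsto \frac{G_y}{G_z}(x_s, y_{t_0}, z_{t_0})$ is affinely parametrized play the role of dual $G$-segments in $X$, and applying the previous analysis to $H(x,y,\cdot) = G^{-1}(x,y,\cdot)$ recovers $(ii)$. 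The equivalence $(ii) \Leftrightarrow (iii)$ then reduces to observing that, after the lower-order terms are stripped away, both conditions extract the sign of the same fourth-order cross-derivative $\partial_s^2 \partial_t^2 G$ evaluated at the diagonal, along complementary affine parametrizations linked by the bijections of \Gone--\Gtwo\ on the primal side and \Gseven--\Geight\ on the dual side.

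The principal technical obstacle is the reverse implication $(iii) \Rightarrow (i)$, which must promote a pointwise fourth-order inequality at a single base point into the global convexity assertion \Gthree. My plan is to apply $(iii)$ at \emph{every} eligible base point simultaneously: for arbitrary $x \in X$ and a smooth path $\sigma \mapsto x_\sigma$ from $x_0$ to $x$, let $(\tilde y_t^\sigma, \tilde z_t^\sigma)$ denote the unique $G$-segment at $x_\sigma$ joining the fixed endpoints $(y_0, z_0)$ and $(y_1, z_1)$. This family depends smoothly on $\sigma$ by \Gone--\Gtwo\ and the $C^4$ regularity of $G$. Invoking $(iii)$ at each $\sigma$ (with base $x_\sigma$) produces a differential inequality on the one-parameter family of $G$-segments, and integrating along $\sigma \in [0,1]$ against the original $G$-segment at $x_0$ then recovers the non-negativity of $\partial_t^2 G(x, y_t, z_t)$. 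The careful bookkeeping in this integration, together with matching the $s$- and $t$-derivatives via the twist conditions, is the heart of the argument.
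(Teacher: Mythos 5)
Your diagonal analysis is correct and your $(i)\Rightarrow(iii)$ argument coincides with the paper's: along a $G$-segment anchored at $x_{s_0}$ one has $\Psi(s_0,\cdot)\equiv 0$ and $\partial_s\Psi(s_0,\cdot)\equiv 0$, and \Gthree\ together with \Gfour\ makes $s=s_0$ a global maximum of $s\mapsto\Psi(s,t)$, so the second-derivative test yields $(iii)$. The difficulty is everything else. For $(iii)\Rightarrow(i)$, your plan is to apply $(iii)$ at every base point $x_\sigma$ of a path, using the family of $G$-segments $(\tilde y^\sigma_t,\tilde z^\sigma_t)$ anchored at $x_\sigma$ with fixed endpoints, and then ``integrate in $\sigma$.'' But $(iii)$ only controls $\partial_s^2\Phi(s,\sigma,t)\big|_{s=\sigma}$, where $\Phi(s,\sigma,t)$ denotes $\Psi$ computed on the $\sigma$-anchored segment --- i.e.\ second derivatives \emph{on the diagonal} $s=\sigma$ --- whereas to conclude $\Phi(1,0,t_0)\le 0$ from $\Phi(0,0,t_0)=\partial_s\Phi(0,0,t_0)=0$ you need concavity of $s\mapsto\Phi(s,0,t_0)$, i.e.\ \emph{off-diagonal} second derivatives of the expression built on the single segment anchored at $x_0$. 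Integrating the diagonal inequalities in $\sigma$ does not produce these: any attempt drags in $\partial_\sigma\tilde y^\sigma_t$, which $(iii)$ does not control. The paper closes this gap with two ingredients your sketch omits: it takes $x_s$ to be the \Gseven--\Geight\ dual affine interpolation, so that $s\mapsto\frac{G_y}{G_z}(x_s,y_{t_0},z_{t_0})$ is an affine segment, and it verifies from the explicit fourth-derivative expansion that under this choice the coefficient of $\ddot{\bar y}_{t_0}$ vanishes, so the quantity at $(s_0,t_0)$ depends only on the first jet of the $t$-curve at $t_0$ and can be evaluated instead on the auxiliary $G$-segment anchored at $x_{s_0}$ and tangent to the original one --- to which $(iii)$ applies, giving $g''(s_0)\le 0$ for every $s_0$ and hence $g(1)\le 0$. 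Note that \Gseven--\Geight\ are needed precisely here, not (as in your sketch) only for a duality step.

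A second gap: your cycle of implications never establishes $(ii)\Rightarrow$ anything. The asserted equivalence $(ii)\Leftrightarrow(iii)$ via ``the same fourth-order cross-derivative at the diagonal'' is valid only at configurations where \emph{both} affine conditions hold simultaneously, whereas $(ii)$ and $(iii)$ each quantify over a different, larger family of curves (arbitrary $t$-curves with the $s$-direction affine, versus arbitrary $s$-curves with the $t$-direction a $G$-segment); the reduction to the common diagonal configuration is exactly the jet-reduction argument above and must be carried out in each direction. Likewise, the proposed duality $G\leftrightarrow H$ for $(i)\Rightarrow(ii)$ is not exact: the affine condition in $(ii)$ constrains only $\frac{G_y}{G_z}=-H_y$ rather than the full pair $(H_y,H)$ that would define a dual segment, and the normalization $1/G_z$ in the differentiated quantity breaks the symmetry. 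The paper instead proves $(i)\Rightarrow(ii)$ directly, again by showing the $\ddot{\bar y}$-coefficient vanishes when $s\mapsto\frac{G_y}{G_z}$ is affine and then comparing with the tangent $G$-segment, for which \Gthree\ makes $s_0$ a minimum of $\partial_t^2 G$.
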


\begin{proof}
	$(i)\Rightarrow (ii).$ Suppose for some $t_0 \in [0,1]$, $s \in [0,1] \longmapsto \frac{G_y}{G_z}(x_s, y_{t_0}, z_{t_0})$ forms an affinely parametrized line segment.\\
	For any fixed $s_0\in [0,1]$, consider $x_{s_0}\in X$, there is a $G$-segment $(x_{s_0}, y_t^{s_0}, z_t^{s_0})$ passing through $(x_{s_0}, y_{t_0}, z_{t_0})$ at $t=t_0$ with the same tangent vector as $(x_{s_0}, y_t, z_t)$ at $t=t_0$, i.e., there exists
another curve $(y_t^{s_0}, z_t^{s_0}) \in cl( Y \times Z)$, such that $(y_t^{s_0},z_t^{s_0})\mid _{t=t_0} = (y_t,z_t)\mid _{t=t_0}$,  $(\dot{y_t}^{s_0},\dot{z_t}^{s_0})\mid _{t=t_0} \parallel (\dot{y}_t,\dot{z}_t)\mid _{t=t_0}$, and $(G_x, G)(x_{s_0},y_t^{s_0},z_t^{s_0}) = (1-t)(G_x, G)(x_{s_0},y_0^{s_0},z_0^{s_0})+t (G_x, G)(x_{s_0},y_1^{s_0},z_1^{s_0})$. \\
	Computing the fourth mixed derivative yields
	\begin{flalign*}
	&\frac{\partial^2}{\partial s^2 }\Biggl(\frac{1}{G_z(x_s, y_t, z_t)}\frac{\partial^2}{\partial t^2} G(x_s,y_t,z_t) \Biggr)\\
	=&\frac{\partial^2}{\partial s^2}\Biggl(\frac{1}{G_z}\Biggr) \frac{\partial^2}{\partial t^2} G + 2 \frac{\partial}{\partial s} \Biggl(\frac{1}{G_z}\Biggr) \frac{\partial^3}{\partial s \partial t^2} G + \frac{1}{G_z} \frac{\partial^4}{\partial s^2 \partial t^2} G  \\
	=& [-(G_z)^{-2}G_{i,z} \ddot{x_s}^{i} - (G_z)^{-2} G_{ij,z}\dot{x_s}^i \dot{x_s}^j+ 2(G_z)^{-3} G_{i,z} G_{j,z} \dot{x_s}^i \dot{x_s}^j] [G_{,k}\ddot{y_t}^{k}+G_z\ddot{z}_t+G_{,kl}\dot{y_t}^k \dot{y_t}^l \\
	&\hspace{0.2cm}+ 2G_{,kz} \dot{y_t}^k \dot{z}_t +G_{zz} (\dot{z}_t)^2]\\
	&+ 2[-(G_z)^{-2}G_{i,z}\dot{x_s}^i] [G_{j,k}\dot{x_s}^j\ddot{y_t}^{k}+G_{j,z}\dot{x_s}^j\ddot{z}_t+G_{j,kl}\dot{x_s}^j\dot{y_t}^k \dot{y_t}^l + 2G_{j,kz}\dot{x_s}^j \dot{y_t}^k \dot{z}_t +G_{j,zz} \dot{x_s}^j(\dot{z}_t)^2]\\
	&+(G_z)^{-1}[G_{i,k}\ddot{x_s}^i\ddot{y_t}^{k} +G_{ij,k}\dot{x_s}^i\dot{x_s}^j\ddot{y_t}^{k} +G_{i,z}\ddot{x_s}^i\ddot{z}_t +G_{ij,z}\dot{x_s}^i\dot{x_s}^j\ddot{z}_t +G_{i,kl}\ddot{x_s}^i\dot{y_t}^k \dot{y_t}^l +G_{ij,kl}\dot{x_s}^i\dot{x_s}^j\dot{y_t}^k \dot{y_t}^l \\
	&\hspace{0.2cm}+2G_{i,kz}\ddot{x_s}^i\dot{y_t}^k\dot{z}_t +2G_{ij,kz}\dot{x_s}^i\dot{x_s}^j\dot{y_t}^k\dot{z}_t +G_{i,zz}\ddot{x_s}^i(\dot{z}_t)^2 +G_{ij,zz}\dot{x_s}^i\dot{x_s}^j(\dot{z}_t)^2]\\ 
	=& [((G_z)^{-1}G_{i,k}-(G_z)^{-2}G_{i,z}G_{,k})\ddot{x_s}^i +((G_z)^{-1}G_{ij,k}- (G_z)^{-2} G_{,k}G_{ij,z} -2(G_z)^{-2}G_{i,z}G_{j,k} \\
	& +2(G_z)^{-3}G_{,k}G_{i,z} G_{j,z}) \dot{x_s}^i \dot{x_s}^j]\ddot{y_t}^k \\
	&+ [(G_z)^{-1}G_{i,kl}-(G_z)^{-2}G_{i,z}G_{,kl}]\ddot{x_s}^i\dot{y_t}^k \dot{y_t}^l \\
	&+ [(G_z)^{-1}G_{ij,kl}-(G_z)^{-2} G_{ij,z}G_{,kl}+ 2(G_z)^{-3} G_{i,z} G_{j,z}G_{,kl} -2(G_z)^{-2}G_{i,z}G_{j,kl}]\dot{x_s}^i \dot{x_s}^j\dot{y_t}^k \dot{y_t}^l\\
	&+[2(G_z)^{-1}G_{i,kz}-2(G_z)^{-2}G_{i,z}G_{,kz}] \ddot{x_s}^{i}\dot{y_t}^k\dot{z}_t\\
	&+[2(G_z)^{-1}G_{ij,kz}-2(G_z)^{-2} G_{ij,z}G_{,kz}+ 4(G_z)^{-3}G_{i,z}G_{j,z}G_{,kz} -4(G_z)^{-2}G_{i,z}G_{j,kz}]\dot{x_s}^i\dot{x_s}^j\dot{y_t}^k\dot{z}_t\\
	&+[(G_z)^{-1}G_{i,zz}-(G_z)^{-2}G_{i,z}G_{zz}] \ddot{x_s}^{i}(\dot{z}_t)^2\\
	&+[(G_z)^{-1}G_{ij,zz} -(G_z)^{-2} G_{ij,z}G_{zz} +2(G_z)^{-3} G_{i,z} G_{j,z}G_{zz} -2(G_z)^{-2}G_{i,z}G_{j,zz}]\dot{x_s}^i \dot{x_s}^j(\dot{z}_t)^2.\\
	\end{flalign*}
	The coefficient of $\ddot{y_t}^k$ vanishes when this expression is evaluated at $t =  t_0$, due to the assumption that $s \in [0,1] \longmapsto \frac{G_y}{G_z}(x_s, y_{t_0}, z_{t_0})$ forms an affinely parametrized line segment, which implies
	
	\begin{flalign*}
	0=& \frac{\partial^2}{\partial s^2} \frac{G_y}{G_z}(x_s, y_{t_0}, z_{t_0})\\
	=&[((G_z)^{-1}G_{i,y}-(G_z)^{-2}G_{i,z}G_{,y})\ddot{x}^i +((G_z)^{-1}G_{ij,y}- (G_z)^{-2} G_{,y}G_{ij,z} -2(G_z)^{-2}G_{i,z}G_{j,y} \\
	& +2(G_z)^{-3}G_{,y}G_{i,z} G_{j,z}) \dot{x}^i \dot{x}^j], \mbox{ for all } s\in [0,1].\\
	\end{flalign*}
	
	Since $(\dot{y_t}^{s_0},\dot{z_t}^{s_0})|_{t=t_0} \parallel (\dot{y}_t,\dot{z}_t)|_{t=t_0}$, there exists some constant $C_1>0$, such that $(\dot{y}_t,\dot{z}_t)|_{t=t_0} =C_1(\dot{y_t}^{s_0},\dot{z_t}^{s_0})|_{t=t_0} $. Moreover, since  $ (y_t,z_t)\mid _{t=t_0}=(y_t^{s_0},z_t^{s_0})\mid _{t=t_0} $, we have
	\begin{flalign*}
	\frac{\partial^2}{\partial s^2 }\Biggl(\frac{1}{G_z(x_s, y_t, z_t)}\frac{\partial^2}{\partial t^2} G(x_s,y_t,z_t) \Biggr) \Bigg|_{t=t_0} = C_1^2	\frac{\partial^2}{\partial s^2 }\Biggl(\frac{1}{G_z(x_s, y_t^{s_0}, z_t^{s_0})}\frac{\partial^2}{\partial t^2} G(x_s,y_t^{s_0},z_t^{s_0}) \Biggr) \Bigg|_{t=t_0} .
	\end{flalign*}
	Denote $g(s):=\frac{\partial^2}{\partial t^2}|_{t=t_0} G(x_s,y_t^{s_0},z_t^{s_0})$, for $s\in[0,1]$. Since $(x_{s_0},y_t^{s_0},z_t^{s_0})$ is a $G$-segment, by \Gthree, we have $g(s) \ge 0$, for all $s \in [0,1]$.
	By definition of $(y_t^{s_0},z_t^{s_0})$, it is clear that $g(s_0) =0$.  The first- and second-order
conditions for an interior minimum then give $g'(s_0) =0 \le g''(s_0)$; (in fact $g'(s_0)=0$ also follows
directly from the definition of a $G$-segment).\\
	By the assumption \Gfour,  we have $G_z <0$, thus, 
	\begin{flalign*}
	&\frac{\partial^2}{\partial s^2 }\Biggl(\frac{1}{G_z(x_s, y_t, z_t)}\frac{\partial^2}{\partial t^2} G(x_s,y_t,z_t) \Biggr) \Bigg|_{(s,t)=(s_0,t_0)}\\
	=& C_1^2 	\frac{\partial^2}{\partial s^2 }\Biggl(\frac{1}{G_z(x_s, y_t^{s_0}, z_t^{s_0})}\frac{\partial^2}{\partial t^2} G(x_s,y_t^{s_0},z_t^{s_0}) \Biggr) \Bigg|_{(s,t)=(s_0,t_0)} \\
	=&C_1^2\frac{\partial^2}{\partial s^2}\Bigg|_{(s,t)=(s_0,t_0)} \Biggl(\frac{1}{G_z}(x_s,y_t^{s_0},z_t^{s_0})\Biggr) g(s_0)+ 2C_1^2 \frac{\partial}{\partial s}\Bigg|_{(s,t)=(s_0,t_0)} \Biggl(\frac{1}{G_z(x_s,y_t^{s_0},z_t^{s_0})}\Biggr) g'(s_0)\\
	& + \frac{C_1^2}{G_z(x_s,y_t^{s_0},z_t^{s_0})} g''(s_0)  \\
	\le & 0.
	\end{flalign*}
	\\
	
	%
	$(ii)\Rightarrow (i).$ For any fixed $x_0 \in X$ and $G$-segment $(x_0, y_t, z_t)$,  we need to show $\frac{\partial^2}{\partial t^2}G(x_1, y_t, z_t) \ge 0$,  for all $t \in [0,1]$ and $x_1 \in X$.\\
	
	For any fixed $t_0 \in [0,1]$ and $x_1 \in X$, define $x_s$ as the solution $\hat{x}$ to the equation
	
	\begin{equation}
	\frac{G_y}{G_z}(\hat{x}, y_{t_0},z_{t_0}) = (1-s) \frac{G_y}{G_z}(x_0, y_{t_0},z_{t_0}) +s \frac{G_y}{G_z}(x_1, y_{t_0},z_{t_0}).
	\end{equation}
	
	By \Gseven\ and \Geight, $x_s$ is uniquely determined for each $s\in (0,1)$. In addition, $x_0$ and $x_1$ satisfy the above equation for $s =0$ and $s=1$, respectively.  \\
	
	Define $g(s):=
\frac{1}{G_z(x_s,y_t,z_t)}\frac{\partial^2}{\partial t^2}G(x_s, y_t, z_t)
\Big|_{t=t_0}$ for $s\in [0,1]$.\\
	
	Then $g(0) =0 = g'(0)$ from the two conditions defining a $G$-segment. \\
	
	In our setting,  $s\in [0,1] \longmapsto \frac{G_y}{G_z}(x_s, y_{t_0}, z_{t_0})$ forms an affinely parametrized line segment, thus
	$0\ge \frac{\partial^2}{\partial s^2 }\Bigl(\frac{1}{G_z(x_s, y_t, z_t)}\frac{\partial^2}{\partial t^2} G(x_s,y_t,z_t) \Bigr)\Big|_{t=t_0} = g''(s)$ for all $ s \in [0,1]$ by hypothesis $(ii)$.\\
	
	Hence $g$ is concave in $[0,1]$, and $g(0)=0$ is a critical point, thus $g(1)\le 0$. Since $G_z<0$ this implies $\frac{\partial^2}{\partial t^2}\Big|_{t=t_0}G(x_1, y_t, z_t) \ge 0$ for any $t_0 \in [0,1]$ and $x_1 \in X$,
as desired.\\
	\\
	
	$(i)\Rightarrow (iii).$
	For any fixed $s_0\in [0,1]$, suppose $t\in [0,1] \longmapsto (G_x, G)(x_{s_0}, y_t, z_t)$  forms an affinely parametrized line segment. For any fixed $t_0 \in [0,1]$, define $g(s):=\Big(\frac{1}{G_z(x_s,y_t,z_t)}\frac{\partial^2}{\partial t^2}G(x_s, y_t, z_t)\Big)\Big|_{t=t_0}$, for all $s \in [0,1]$. By \Gthree--\Gfour, we know $g(s)\le 0$, for all $s\in [0,1]$. By the definition of $(y_t, z_t)$, we have $g(s_0)=g'(s_0) =0$. Thus $g''(s_0)\le 0$.\\
	\\
	
	$(iii)\Rightarrow (i).$ For any fixed $x_0 \in X$, suppose $(x_0, y_t^{0}, z_t^{0})$ is a $G$-segment, then we need to show $\frac{\partial^2}{\partial t^2}G(x_1, y_t^{0}, z_t^{0}) \ge 0$,  for all $t \in [0,1], x_1 \in X$.\\
	
	For any fixed $t_0 \in [0,1]$, $x_1 \in X$, define $x_s$ as the solution $\hat{x}$ of equation
	
	\begin{equation}
	\frac{G_y}{G_z}(\hat{x}, y_{t_0}^{0},z_{t_0}^{0}) = (1-s) \frac{G_y}{G_z}(x_0, y_{t_0}^{0},z_{t_0}^{0}) +s \frac{G_y}{G_z}(x_1, y_{t_0}^{0},z_{t_0}^{0}).
	\end{equation}
	
	By \Gseven\ and \Geight, $x_s$ is uniquely determined for each $s\in (0,1)$. In addition, $x_0$ and $x_1$ satisfy the above equation for $s =0$ and $s=1$ respectively.  \\
	
	Define $g(s):=\frac{1}{G_z(x_s,y_t^{0},z_t^{0})}\frac{\partial^2}{\partial t^2}G(x_s, y_t^{0}, z_t^{0})\Big|_{t=t_0}$, for $s\in [0,1]$.\\
	
	Then $g(0) =g'(0)  =0 $ by the two conditions defining a $G$-segment. \\
	
	For any fixed $s_0 \in [0,1]$, there is a $G$-segment $(x_{s_0}, y_t^{s_0}, z_t^{s_0})$ passing through $(x_{s_0}, y_{t_0}^{0}, z_{t_0}^{0})$ at $t=t_0$ with the same tangent vector as $(x_{s_0}, y_t^{0}, z_t^{0})$ at $t=t_0$, i.e., there exists 
	 another curve $(y_t^{s_0}, z_t^{s_0}) \in cl( Y \times Z)$ and some constant $C_2>0$, such that $(y_t^{s_0},z_t^{s_0})\mid _{t=t_0} = (y_t^{0},z_t^{0})\mid _{t=t_0}$,  $(\dot{y_t}^{s_0},\dot{z_t}^{s_0})\mid _{t=t_0} = \frac{1}{C_2} (\dot{y}_t^{0},\dot{z}_t^{0})\mid _{t=t_0}  $, and $(G_x, G)(x_{s_0},y_t^{s_0},z_t^{s_0}) = (1-t)(G_x, G)(x_{s_0},y_0^{s_0},z_0^{s_0})+t (G_x, G)(x_{s_0},y_1^{s_0},z_1^{s_0})$. \\
	Computing the mixed fourth derivative yields
	\begin{flalign*}
	&\frac{\partial^2}{\partial s^2 }\Biggl(\frac{1}{G_z(x_s, y_t^{0}, z_t^{0})}\frac{\partial^2}{\partial t^2} G(x_s,y_t^{0},z_t^{0}) \Biggr) \Bigg|_{(s,t)=(s_0, t_0)} \\
	=& C_2^2	\frac{\partial^2}{\partial s^2 }\Biggl(\frac{1}{G_z(x_s, y_t^{s_0}, z_t^{s_0})}\frac{\partial^2}{\partial t^2} G(x_s,y_t^{s_0},z_t^{s_0}) \Biggr) \Bigg|_{(s,t)=(s_0, t_0)} ,
	\end{flalign*}
	where the equality is derived from the condition that $s \in [0,1] \longmapsto \frac{G_y}{G_z}(x_s, y_{t_0}, z_{t_0})$ forms an affinely parametrized line segment,  $(y_t^{s_0},z_t^{s_0})\mid _{t=t_0} = (y_t,z_t)\mid _{t=t_0}$ and $ (\dot{y}_t^{0},\dot{z}_t^{0})\mid _{t=t_0} = C_2(\dot{y_t}^{s_0},\dot{z_t}^{s_0})\mid _{t=t_0} $. Moreover, the latter expression is non-positive by assumption $(iii)$.\\
	
	Thus $g''(s_0)\le 0$ for all $s_0 \in [0,1]$. Since $g$ is concave in $[0,1]$, and $g(0)=0$ is a critical point, we have $g(1)\le 0$. Thus $G_z<0$ implies $\frac{\partial^2}{\partial t^2}\Big|_{t=t_0}G(x_1, y_t^{0}, z_t^{0}) \ge 0$ for all $t_0 \in [0,1]$ and $x_1 \in X$, as desired. 
\end{proof}

		For strictly concavity of the profit functional, one might need a strict version of hypothesis \Gthree:
		
		\Gthree$_{s}$ For each $x,x_0 \in X$ and $x\ne x_0$, assume $t \in [0,1] \longmapsto G(x, y_t, z_t)$ is strictly convex along all $G$-segments $(x_0, y_t, z_t)$ defined in (\ref{$G$-segment}).\\
	
	\begin{remark}\label{(C5)_s and (C5)_u}
		Strict inequality in $(ii)$ [or $(iii)$] implies \Gthree$_{s}$ but the reverse is not necessarily true, i.e. \Gthree$_{s}$ is intermediate in strength between \Gthree\ and strict inequality version of $(ii)$ [or $(iii)$]. Besides, strict inequality versions of $(ii)$ and $(iii)$ are equivalent, and denoted by $\Gthree_{u}$.

Note inequality \eqref{foc} below and its strict and uniform versions \Gthree$_{s}$ and \Gthree$_{u}$
precisely generalize of the analogous hypotheses $(B3)$, $(B3)_{s}$ and $(B3)_{u}$ from the quasilinear case 
\cite{FigalliKimMcCann11}.
	\end{remark}
	\begin{proof}
		We only show strict inequality of $(ii)$ implies that of $(iii)$ here, since the other direction is similar.\\
		For any fixed $s_0\in [0,1]$, suppose $t\in [0,1] \longmapsto (G_x, G)(x_{s_0}, y_t, z_t)$  forms an affinely parametrized line segment. For any fixed $t_0 \in [0,1]$, define ${x}_s^{t_0}$ as a solution to the equation
		$\frac{G_y}{G_z}({x}_{s}^{t_0}, y_{t_0},z_{t_0})$ $= (1-s) \frac{G_y}{G_z}({x}_{0}^{t_0}, y_{t_0},z_{t_0}) +s \frac{G_y}{G_z}({x}_{1}^{t_0}, y_{t_0},z_{t_0})$, with initial conditions ${x}_s^{t_0} |_{s=s_0}= x_{s_0}$ and $\dot{x}_s^{t_0}|_{s=s_0} = C_1 \dot{x}_s|_{s=s_0}$, for some constant $C_1 >0$.	
		Thus, by strict inequality of $(ii)$, we have 
		\begin{flalign*}
			0>&\frac{\partial^2}{\partial s^2 }\Big(\frac1{G_z(x_s^{t_0}, y_t, z_t)}\frac{\partial^2}{\partial t^2} G(x_s^{t_0},y_t,z_t) \Big)\Big|_{(s,t)=(s_0, t_0)}\\
			=& \frac{\partial^2}{\partial s^2 }\Big(\frac1{G_z(x_s^{t_0}, y_t, z_t)}\Big)\frac{\partial^2}{\partial t^2} G(x_s^{t_0},y_t,z_t) \Big|_{(s,t)=(s_0, t_0)}+ \frac{\partial}{\partial s }\Big(\frac1{G_z(x_s^{t_0}, y_t, z_t)}\Big)\frac{\partial^3}{\partial s \partial t^2} G(x_s^{t_0},y_t,z_t) \Big|_{(s,t)=(s_0, t_0)}\\
			&+ \frac1{G_z(x_s^{t_0}, y_t, z_t)}\frac{\partial^4}{\partial s^2 \partial t^2} G(x_s^{t_0},y_t,z_t) \Big|_{(s,t)=(s_0, t_0)}\\
			=& 
			-\frac{G_{x,z}(x_s^{t_0},y_t,z_t)}{G_z^2(x_s^{t_0}, y_t, z_t)}\frac{\partial^2}{\partial t^2} G_x(x_s^{t_0},y_t,z_t) (\dot{x}_s^{t_0})^2 \Big|_{(s,t)=(s_0, t_0)}\\
			&+ \frac1{G_z(x_s^{t_0}, y_t, z_t)}\frac{\partial^2}{\partial t^2} G_{xx}(x_s^{t_0},y_t,z_t)(\dot{x}_s^{t_0})^2  \Big|_{(s,t)=(s_0, t_0)}\\
			=&C_1^2\Big[-\frac{G_{x,z}(x_{s},y_t,z_t)}{G_z^2(x_{s}, y_t, z_t)}\frac{\partial^2}{\partial t^2} G_x(x_{s},y_t,z_t) (\dot{x}_s)^2 \Big|_{(s,t)=(s_0, t_0)}\\
			&+ \frac1{G_z(x_{s}, y_t, z_t)}\frac{\partial^2}{\partial t^2} G_{xx}(x_{s},y_t,z_t)(\dot{x}_s)^2  \Big|_{(s,t)=(s_0, t_0)}\Big]\\
			=&C_1^2\frac{\partial^2}{\partial s^2 }\Big(\frac1{G_z(x_s, y_t, z_t)}\frac{\partial^2}{\partial t^2} G(x_s,y_t,z_t) \Big)\Big|_{(s,t)=(s_0, t_0)}.\\
		\end{flalign*}
		Here we use the initial condition ${x}_s^{t_0} |_{s=s_0}= x_{s_0}$ and $\dot{x}_s^{t_0}|_{s=s_0} = C_1 \dot{x}_s|_{s=s_0}$.  Besides, since $(x_{s_0}, y_t, z_t)$ forms a $G$-segment, therefore we have $\frac{\partial^2}{\partial t^2} G(x_s^{t_0},y_t,z_t) \Big|_{(s,t)=(s_0, t_0)}=\frac{\partial^2}{\partial t^2} G(x_s,y_t,z_t) \Big|_{(s,t)=(s_0, t_0)}$ $=0$ and $\frac{\partial^2}{\partial t^2} G_x(x_s^{t_0},y_t,z_t) \Big|_{(s,t)=(s_0, t_0)}$ $=\frac{\partial^2}{\partial t^2} G_x(x_s,y_t,z_t) \Big|_{(s,t)=(s_0, t_0)}=0$. \\
		From the above inequality and $C_1 >0$, one has $\frac{\partial^2}{\partial s^2 }\Big(\frac1{G_z(x_s, y_t, z_t)}\frac{\partial^2}{\partial t^2} G(x_s,y_t,z_t) \Big)\Big|_{(s,t)=(s_0, t_0)}<0$, whenever $\dot{x_s}|_{s=s_0}$ and $(\dot{y_t},\dot{z}_t)|_{t=t_0}$ are nonzero. Since this inequality holds for each fixed $t_0 \in [0,1]$, the strict version of $(iii)$ is proved.
		\end{proof}

\begin{corollary}\label{FourthOrder4}
	Assuming \Gzero-\Gtwo, \Gfour-\Geight\ and $G\in C^4(cl(X\times Y \times Z)
	)$,  then \Gthree\ is equivalent to the following statement:
	 For any given curve $x_s\in X$ connecting $x_0$ and $x_1$,  and any curve $(y_t, z_t) \in cl( Y \times Z)$ connecting $(y_0,z_0)$ and $(y_1, z_1)$, we have 
	\begin{equation}\label{foc}
		\frac{\partial^2}{\partial s^2 }\Biggl(\frac{1}{G_z(x_s, y_t, z_t)}\frac{\partial^2}{\partial t^2} G(x_s,y_t,z_t) \Biggr)\Bigg|_{(s,t) = (s_0,t_0)}\le 0,
	\end{equation}
	whenever either of the two curves $t\in [0,1] \longmapsto (G_x, G)(x_{s_0}, y_t, z_t)$ and  $s\in [0,1] \longmapsto \frac{G_y}{G_z}(x_s, y_{t_0}, z_{t_0})$ forms an affinely parametrized line segment.\\
\end{corollary}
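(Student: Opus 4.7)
The corollary is essentially a packaging of the proposition immediately preceding it, so my plan is to derive it as a direct corollary rather than run a fresh argument. First I would observe that the hypothesis in the displayed statement, ``\eqref{foc} holds whenever either of the two curves forms an affinely parametrized line segment,'' is logically the conjunction of the two conditions $(ii)$ and $(iii)$ of the preceding proposition: the clause ``$s\mapsto \frac{G_y}{G_z}(x_s,y_{t_0},z_{t_0})$ is affinely parametrized'' reproduces $(ii)$, while the clause ``$t\mapsto (G_x,G)(x_{s_0},y_t,z_t)$ is affinely parametrized'' reproduces $(iii)$. Thus the corollary can be restated as: \Gthree\ holds if and only if both $(ii)$ and $(iii)$ hold.

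The plan is then to invoke the proposition twice. One direction is immediate: if \Gthree\ holds, then by $(i)\Rightarrow(ii)$ and $(i)\Rightarrow(iii)$ of the proposition, \eqref{foc} holds under each of the two affine-segment hypotheses separately, hence under ``either'' of them. Conversely, if \eqref{foc} holds whenever either curve is an affine segment, then in particular it holds whenever $s\mapsto \frac{G_y}{G_z}(x_s,y_{t_0},z_{t_0})$ is an affine segment, which is hypothesis $(ii)$; by $(ii)\Rightarrow(i)$ of the proposition this yields \Gthree. (Alternatively one may use $(iii)\Rightarrow(i)$.)

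There is no real obstacle: the content lies entirely in the preceding proposition, and all the hypotheses \Gzero--\Gtwo, \Gfour--\Geight, and $G\in C^4$ are carried over unchanged so that proposition applies verbatim. The only thing worth verifying carefully is the logical parsing of the word ``either'' in the statement — namely that it is read as ``for any choice of curves, whenever the first curve is affine, \eqref{foc} holds, \emph{and} whenever the second curve is affine, \eqref{foc} holds,'' rather than as a disjunctive weakening that would make the statement trivially weaker than each of $(ii),(iii)$ individually. Once this reading is fixed, the proof reduces to two lines citing the equivalences $(i)\Leftrightarrow (ii)$ and $(i)\Leftrightarrow (iii)$ established in the proposition.
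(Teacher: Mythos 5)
Your reading is correct and matches the paper's intent exactly: the corollary is stated without proof precisely because it is the conjunction of conditions $(ii)$ and $(iii)$ of the preceding proposition, and the two directions follow immediately from the equivalences $(i)\Leftrightarrow(ii)$ and $(i)\Leftrightarrow(iii)$ established there. Your care in parsing ``either'' as a conjunction of the two conditional statements (rather than a disjunctive weakening) is the only point of substance, and you resolve it correctly.
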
	



\bigskip

\section{Concavity of principal's objective when her utility does not depend directly on agents' private types:  
A sharper, more local result} 
\label{A:B}

In this section, we reveal a necessary and sufficient condition  for the concavity of principal's maximization problem, not only for the specific example as above, but for many other private-value circumstances, where principal's utility only directly depends on the products sold and their selling prices, but not the buyer's type. \\



\begin{proposition}[Concavity of principal's objective when her payoff is independent of agents' types]\label{bar{G}^*-Concavity2}
	Suppose $G \in C^3(cl(X\times Y \times Z)
	)$ satisfies \Gzero-\Gsix,  $\pi \in C^2( cl( Y \times Z)
	)$, and assume there exists a set $J\subset cl(X)$ such that for each $\bar{y}\in Y\times Z$, $ 0\in ( \pi_{\bar{y}}+G_{\bar{y}})(cl(J), \bar{y})$.  Then the following statements are equivalent:
	\begin{enumerate}[(i)]
		\item \text{\rm local ${\bar G}^*$-concavity of} $\pi$: i.e. $\pi_{\bar{y}\bar{y}}(\bar{y}) + G_{\bar{y}\bar{y}}(x, \bar{y})$ is non-positive definite whenever
 $(x, \bar{y}) \in cl(J)\times Y \times Z$ satisfies $\pi_{\bar{y}}(\bar{y})+ G_{\bar{y}}(x, \bar{y})=0$;
		\item $\pmb \Pi$ is concave on $\mathcal{U}$ for all $\mu\ll \mathcal{L}^m$.
	\end{enumerate}
\end{proposition}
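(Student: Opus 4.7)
The plan is to reduce (ii) to the pointwise matrix inequality furnished by Lemma \ref{LemmaProfitConcavity} (in conjunction with Theorem \ref{maintheorem}): statement (ii) holds iff the quadratic form
$Q(x,\bar y,\xi):=\bigl(\pi_{,\bar k\bar j}(\bar y)-\pi_{,\bar l}(\bar y)\bar G^{\bar i,\bar l}(x,-1,\bar y)\bar G_{\bar i,\bar k\bar j}(x,-1,\bar y)\bigr)\xi^{\bar k}\xi^{\bar j}$
is non-positive for every $(x,\bar y,\xi)\in X\times(Y\times Z)\times\R^{n+1}$. Lemma \ref{LemmaProfitConcavity} additionally supplies the key identity $Q(x,\bar y,\xi)=f''(0)$ for $f(t):=\pi(\bar y_t)$ along the $G$-segment $t\mapsto\bar y_t$ based at $x$ with $\bar y_0=\bar y$ and $\dot{\bar y}_0=\xi$, which exists by \Gone--\Gtwo. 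This converts $Q$ into a scalar accessible along $G$-segments, and it bridges (i) and (ii) via \Gthree.

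For (i) $\Rightarrow$ (ii), I would exploit $0\in(\pi_{\bar y}+G_{\bar y})(cl(J),\bar y)$ to select, for each $\bar y$, a support point $x^*\in cl(J)$ with $\pi_{\bar y}(\bar y)+G_{\bar y}(x^*,\bar y)=0$. Setting $g(t):=\pi(\bar y_t)+G(x^*,\bar y_t)$ along the $G$-segment based at the given $x$, the first-order condition forces $g'(0)=0$ and kills the $\ddot{\bar y}_0$-term in $g''(0)$, leaving $g''(0)=\bigl(\pi_{\bar y\bar y}(\bar y)+G_{\bar y\bar y}(x^*,\bar y)\bigr)\xi^{\bar k}\xi^{\bar j}\le 0$ by (i). Since $\bar y_t$ is a genuine $G$-segment based at $x$, \Gthree\ yields $\frac{d^2}{dt^2}G(x^*,\bar y_t)\big|_{t=0}\ge 0$, so rearranging produces $Q(x,\bar y,\xi)=f''(0)\le-\frac{d^2}{dt^2}G(x^*,\bar y_t)\big|_{t=0}\le 0$, as desired.

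For the converse (ii) $\Rightarrow$ (i), I would take $(x,\bar y)$ with $x\in cl(J)$ and $\pi_{\bar y}(\bar y)+G_{\bar y}(x,\bar y)=0$, and repeat the computation with $x^*=x$ along the $G$-segment based at $x$. The defining property of a $G$-segment makes $(G_x,G)(x,\bar y_t)$ affine in $t$, so $\frac{d^2}{dt^2}G(x,\bar y_t)\big|_{t=0}=0$, and the first-order condition again eliminates the cross term, producing $(\pi_{\bar y\bar y}+G_{\bar y\bar y})(x,\bar y)\xi^{\bar k}\xi^{\bar j}=f''(0)=Q(x,\bar y,\xi)\le 0$ by (ii). The principal obstacle is really only bookkeeping: one must keep straight how $Q$ is identified with the second $t$-derivative of $\pi$ along a $G$-segment based at $x$, while exploiting a first-order condition at a possibly different support point $x^*\in cl(J)$ and invoking \Gthree\ for the corresponding auxiliary direction. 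The resulting statement is best viewed as a local, pointwise refinement of Corollary \ref{bar{G}^*-Concavity}, with the global $\bar G^*$-concavity of $\pi$ relaxed to the existence of such a supporting $x^*$ at each $\bar y$.
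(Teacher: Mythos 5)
Your proposal is correct and follows essentially the same route as the paper: both reduce (ii) to non-positivity of the form $\pi_{,\bar k\bar j}-\pi_{,\bar l}\bar G^{\bar i,\bar l}\bar G_{\bar i,\bar k\bar j}$ via Lemma \ref{LemmaProfitConcavity} and Theorem \ref{maintheorem}, compute along the $G$-segment based at the arbitrary $x$ while applying the first-order condition at the support point $x^*\in cl(J)$ to kill the $\ddot{\bar y}$-term, and invoke \Gthree\ (for $(i)\Rightarrow(ii)$) or the vanishing of $\frac{d^2}{dt^2}G(x,\bar y_t)$ along the segment's own base point (for $(ii)\Rightarrow(i)$). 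Your decomposition $f''(0)=g''(0)-\frac{d^2}{dt^2}G(x^*,\bar y_t)\big|_{t=0}$ is exactly the paper's chain of inequalities \eqref{Eqn:proofcor2}, merely repackaged through the auxiliary function $g$.
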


\begin{remark}
	The sufficient condition, i.e., existence of $J \subset cl(X)$(, such that for each  $\bar{y}\in Y\times Z$, $ 0\in ( \pi_{\bar{y}}+G_{\bar{y}})(cl(J), \bar{y})$), make the statement more general than taking some specific subset of $cl(X)$ instead. In particular, if $J=cl(X)$, this condition is equivalent to: for each $\bar{y} \in Y\times Z$, there exists $x \in cl(X)$, such that $( \pi_{\bar{y}}+G_{\bar{y}})(x, \bar{y}) = 0$. One of its economic interpretations is that for each product-price type, there exists a customer type, such that his marginal disutility, the gradient with respect to product type (e.g., quality, quantity, etc.) and price type, coincides with the marginal utility of the monopolist. 
\end{remark}

\begin{proof}[Proof of Proposition \ref{bar{G}^*-Concavity2}]
	 	$(i)\Rightarrow (ii).$ Similar to the proof of Corollary \ref{bar{G}^*-Concavity}, we only need to show non-positive definiteness of $(\pi_{\bar{k}\bar{j}}- \pi_{\bar{l}} \bar{G}^{\bar i,\bar l}\bar{G}_{\bar{i},\bar{k}\bar{j}})$, i.e., for any $\bar{x} = (x,x_0) \in X \times X_0$,  $\bar{y} \in Y\times Z$ and $\xi \in \R^{n+1} $, $\big(\pi_{\bar{k}\bar{j}}(\bar{y})- \pi_{\bar{l}}(\bar{y}) \bar{G}^{\bar i,\bar l}(\bar{x}, \bar{y})\bar{G}_{\bar{i},\bar{k}\bar{j}}(\bar{x}, \bar{y})\big)\xi^{\bar{k}}\xi^{\bar{j}} \le 0$.\\
	 
	 	For any fixed $\bar{x} = (x, x_0) \in X \times X_0$, $\bar{y} \in Y\times Z$, $\xi \in \R^{n+1}$, there exist $\delta >0$ and a curve $t \in (-\delta, \delta) \longmapsto \bar{y}_t \in Y\times Z$, such that $\bar{y}_t|_{t=0} =\bar{y}$,  $\dot{\bar{y}}_t|_{t=0} = \xi$ and $\frac{d^2}{dt^2} \bar{G}_{\bar{i}, }(\bar{x}, \bar{y}_t) = 0$. Thus, 
	 	\begin{equation}\label{EqnSecondDiffbar{G}3}
	 	0 = \frac{d^2}{dt^2}\bigg|_{t=0}\bar{G}_{\bar{i},}(\bar{x}, \bar{y}_t) = \bar{G}_{\bar{i}, \bar{k}\bar{j}}(\bar{x}, \bar{y}) \xi^{\bar{k}}\xi^{\bar{j}} + \bar{G}_{\bar{i}, \bar{k}}(\bar{x}, \bar{y})\cdot (\ddot{\bar{y}}_t)^{\bar{k}}\big|_{t=0}
	 	\end{equation}
	 	For this $\bar{y}$, since $0\in (\pi_{\bar{y}}+G_{\bar{y}})(cl(J), \bar{y})$, there exists $x^*\in cl(J)$, such that $( \pi_{\bar{y}}+G_{\bar{y}})(x^*, \bar{y})=0$. By property $(i)$, one has $(\pi_{\bar{y}\bar{y}}(\bar{y}) + G_{\bar{y}\bar{y}}(x^*, \bar{y}))\xi^{\bar{k}}\xi^{\bar{j}} \le 0$.
	 	Let $\bar{x}^* =(x^*,-1)$, then $\pi_{\bar{l}}(\bar{y}) = \bar{G}_{,\bar{l}}({\bar{x}}^*, \bar{y})$  and $\pi_{\bar{k}\bar{j}}(\bar{y})\xi^{\bar{k}}\xi^{\bar{j}} \le \bar{G}_{,\bar{k}\bar{j}}({\bar{x}^*}, \bar{y})\xi^{\bar{k}}\xi^{\bar{j}}$, for each $\bar{l} = 1, 2,..., n+1$. Thus, combining (\ref{EqnSecondDiffbar{G}3}) and \Gthree, we have 
	 	\begin{flalign}\label{Eqn:proofcor2}
	 	\begin{aligned}
	 	&\big(\pi_{\bar{k}\bar{j}}(\bar{y})- \pi_{\bar{l}}(\bar{y}) \bar{G}^{\bar i,\bar l}(\bar{x}, \bar{y})\bar{G}_{\bar{i},\bar{k}\bar{j}}(\bar{x}, \bar{y})\big)\xi^{\bar{k}}\xi^{\bar{j}} \\
	 	&\le
	 	\big(\bar{G}_{,\bar{k}\bar{j}}({\bar{x}}^*, \bar{y})- \bar{G}_{,\bar{l}}({\bar{x}}^*, \bar{y})\bar{G}^{\bar i,\bar l}(\bar{x}, \bar{y})\bar{G}_{\bar{i},\bar{k}\bar{j}}(\bar{x}, \bar{y})\big)\xi^{\bar{k}}\xi^{\bar{j}} \\
	 	&= \bar{G}_{,\bar{k}\bar{j}}({\bar{x}}^*, \bar{y})\xi^{\bar{k}}\xi^{\bar{j}}+ \bar{G}_{,\bar{l}}({\bar{x}}^*, \bar{y})\cdot (\ddot{\bar{y}}_t)^{\bar{l}}\big|_{t=0}\\
	 	& = \frac{d^2}{dt^2}\bigg|_{t=0} \bar{G}({\bar{x}}^*, \bar{y}_t)\\
	 	& = - \frac{d^2}{dt^2}\bigg|_{t=0} G({x}^*, \bar{y}_t)\\
	 	&\le 0.
	 	\end{aligned}
	 	\end{flalign}

	    $(ii)\Rightarrow (i).$  
	 	For any $(x,\bar{y}) \in cl(J)\times {Y} \times {Z}$, satisfying $\pi_{\bar{y}}(\bar{y}) +  G_{\bar{y}}(x, \bar{y})=0$, we would like to show  $(\pi_{\bar{k}\bar{j}}(\bar{y}) + {G}_{,\bar{k}\bar{j}}({x}, \bar{y}))\xi^{\bar{k}}\xi^{\bar{j}} \le 0$, for any $\xi\in \R^{n+1}$. Let $\bar{x} = (x, -1)$, there exist $\delta >0$ and a curve $t \in (-\delta, \delta) \longmapsto \bar{y}_t \in Y\times Z$, such that $\bar{y}_t|_{t=0} =\bar{y}$,  $\dot{\bar{y}}_t|_{t=0} = \xi$ and $\frac{d^2}{dt^2} \bar{G}_{\bar{i}, }(\bar{x}, \bar{y}_t) = 0$. Thus, equation \eqref{EqnSecondDiffbar{G}3} holds.

	 	Since $\pmb \Pi$ is concave, by Theorem $\ref{maintheorem}$ and Lemma  $\ref{LemmaProfitConcavity}$ as well as equation \eqref{EqnSecondDiffbar{G}3}, we have 
	 	\begin{flalign*}
	 		0 \ge& \big(\pi_{\bar{k}\bar{j}}(\bar{y})- \pi_{\bar{l}}(\bar{y}) \bar{G}^{\bar i,\bar l}(\bar{x}, \bar{y})\bar{G}_{\bar{i},\bar{k}\bar{j}}(\bar{x}, \bar{y})\big)\xi^{\bar{k}}\xi^{\bar{j}} \\
	 		=& \big(\pi_{\bar{k}\bar{j}}(\bar{y}) - \bar{G}_{,\bar{k}\bar{j}}({\bar{x}}, \bar{y}) + \bar{G}_{,\bar{k}\bar{j}}({\bar{x}}, \bar{y})-  \bar{G}_{,\bar{l}}({\bar{x}}, \bar{y})\bar{G}^{\bar i,\bar l}(\bar{x}, \bar{y})\bar{G}_{\bar{i},\bar{k}\bar{j}}(\bar{x}, \bar{y})\big)\xi^{\bar{k}}\xi^{\bar{j}} \\
	 		=& \big(\pi_{\bar{k}\bar{j}}(\bar{y}) - \bar{G}_{,\bar{k}\bar{j}}({\bar{x}}, \bar{y})\big)\xi^{\bar{k}}\xi^{\bar{j}} + \frac{d^2}{dt^2}\bigg|_{t=0} \bar{G}(\bar{x}, \bar{y}_t)\\
	 		=&\big(\pi_{\bar{k}\bar{j}}(\bar{y}) - \bar{G}_{,\bar{k}\bar{j}}({\bar{x}}, \bar{y})\big)\xi^{\bar{k}}\xi^{\bar{j}} \\
	 		=& (\pi_{\bar{k}\bar{j}}(\bar{y}) + {G}_{,\bar{k}\bar{j}}({x}, \bar{y}))\xi^{\bar{k}}\xi^{\bar{j}},
	 	\end{flalign*}
	 	which completes the proof.
\end{proof}

The following remark provides an equivalent condition for  uniformly concavity of principal's maximization problem. Its proof is very similar to that of the above proposition.

\begin{remark}\label{R:B2}
 In addition to the hypotheses of Proposition \ref{bar{G}^*-Concavity2}, when
 $\bar{z}=+\infty$ assume the homeomorphisms of \Gone\ are uniformly bi-Lipschitz. Then
 the following statements are equivalent:
	\begin{enumerate}[(i)]
		\item $\pi_{\bar{y}\bar{y}}(\bar{y}) + G_{\bar{y}\bar{y}}(x, \bar{y})$ is  uniformly negative definite 
for all $(x, \bar{y}) \in cl(J)\times Y \times Z$ such that $\pi_{\bar{y}}(\bar{y}) + G_{\bar{y}}(x, \bar{y})=0$;
		\item $\pmb \Pi$ is uniformly concave  on $\mathcal{U} \subset W^{1,2}(X,d\mu)$, uniformly for all $\mu\ll \mathcal{L}^m$.
	\end{enumerate}
\end{remark}

	When $m=n$, $G(x,y,z)= b(x,y)-z \in C^3(cl(X\times Y \times Z)
	)$ satisfies \Gzero-\Geight,  and $\pi(y,z)=z-a(y) \in C^2( cl( Y \times Z)
	)$, then Corollary \ref{bar{G}^*-Concavity} shows $b^*$-convexity of $a$ is a sufficient condition for
	concavity of $\pmb \Pi$  for all $\mu\ll \mathcal{L}^m$. One may wonder under what hypotheses it would become a necessary condition as well. From Theorem A.1 in \cite{KimMcCann10}, under the same assumptions as above, the manufacturing cost $a$ is $b^*$-convex if and only if it satisfies the following local $b^*$-convexity hypothesis: $D^2a(y)\ge D^2_{yy}b(x,y)$ whenever $Da(y) = D_{y}b(x,y)$. 
	Combined with Proposition \ref{bar{G}^*-Concavity2}, we have the following corollary.

\begin{corollary}
Adopting the terminology of [FKM11],  i.e. (B0)-(B4), $G(x,y,z) = b(x,y)- z \in C^3(cl(X\times Y \times Z)$ and 
$\pi(x,y,z) = z -a(y) \in C^2( cl( Y \times Z)$, then $a(y)$ is $b^*$-convex if  and only if $\mathbf \Pi$ is concave on $\mathcal U$ and for every $y \in Y$, there exists $x \in cl(X)$ such that $Da(y) = D_y b(x,y)$.


\end{corollary}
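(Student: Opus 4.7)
The plan is to apply Proposition \ref{bar{G}^*-Concavity2} (with $J = cl(X)$) as a bridge between the functional concavity of $\mathbf{\Pi}$ and a purely pointwise second-order condition on $a$, and then invoke Theorem A.1 of Kim-McCann \cite{KimMcCann10} which, under (B0)-(B3), characterizes $b^*$-convexity of $a$ by the local condition $D^2 a(y) \ge D^2_{yy}b(x,y)$ whenever $Da(y) = D_yb(x,y)$. The key observation is that in the present quasilinear setting, all of the hypotheses, conclusions, and auxiliary conditions of Proposition \ref{bar{G}^*-Concavity2} translate cleanly into the Kim-McCann picture, because the augmented variable $z$ contributes only trivial (vanishing) blocks.

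The first step is this translation. With $G(x,y,z) = b(x,y) - z$ and $\pi(x,y,z) = z - a(y)$, direct computation gives
\begin{equation*}
\pi_{\bar y}(\bar y) + G_{\bar y}(x,\bar y) = \bigl(D_y b(x,y) - D a(y),\; 0\bigr),
\end{equation*}
so the $z$-component vanishes automatically, and the abstract hypothesis ``for each $\bar y \in Y\times Z$ there exists $x \in cl(X)$ with $(\pi_{\bar y}+G_{\bar y})(x,\bar y)=0$'' collapses to the stated condition that for every $y\in Y$ some $x\in cl(X)$ satisfies $Da(y) = D_y b(x,y)$. Similarly
\begin{equation*}
\pi_{\bar y \bar y}(\bar y) + G_{\bar y\bar y}(x,\bar y) = \begin{pmatrix} D^2_{yy}b(x,y) - D^2 a(y) & 0 \\ 0 & 0 \end{pmatrix},
\end{equation*}
so non-positive definiteness at points of first-order attainment is exactly Kim-McCann's local $b^*$-convexity criterion.

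With these identifications in hand, the forward direction proceeds as follows. If $a$ is $b^*$-convex, then for each $y\in Y$ the compactness of $cl(X)$ ensures the supremum in $a(y) = \sup_{x\in cl(X)}[b(x,y) - a^b(x)]$ is attained at some $x^*\in cl(X)$, whose interior first-order condition (or boundary version, using the hypotheses of Appendix A) yields $Da(y) = D_y b(x^*,y)$; thus the second statement holds. Kim-McCann's Theorem A.1 then gives local $b^*$-convexity of $a$, which by the translation above is local $\bar G^*$-concavity of $\pi$; Proposition \ref{bar{G}^*-Concavity2} (with $J = cl(X)$, whose hypothesis we have just verified) delivers concavity of $\mathbf{\Pi}$ on $\mathcal U$. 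For the converse, assuming both conditions on the right-hand side, the attainment hypothesis is exactly what is required to apply Proposition \ref{bar{G}^*-Concavity2}, and its equivalence then upgrades concavity of $\mathbf{\Pi}$ to local $\bar G^*$-concavity of $\pi$, i.e.\ local $b^*$-convexity of $a$; a second application of Kim-McCann's Theorem A.1 promotes this to genuine $b^*$-convexity.

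The main obstacle is bookkeeping rather than conceptual: one must check that the quasilinear reduction of \Gzero--\Gsix\ together with the enlargements \Gseven--\Geight\ from Appendix A indeed imply the Ma-Trudinger-Wang style hypotheses (B0)-(B3) needed by \cite[Thm.\,A.1]{KimMcCann10}, and that the appeal to first-order conditions at boundary maximizers is compatible with the regularity assumed here. Both issues are handled exactly as in \cite{FigalliKimMcCann11}, so no new estimates are required.
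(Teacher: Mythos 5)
Your proposal is correct and follows essentially the same route as the paper: both directions rest on Proposition \ref{bar{G}^*-Concavity2} with $J=cl(X)$ combined with Theorem A.1 of Kim--McCann, and your explicit computation of the blocks of $\pi_{\bar y}+G_{\bar y}$ and $\pi_{\bar y\bar y}+G_{\bar y\bar y}$ merely makes visible the translation the paper leaves implicit. One small correction of justification: the identity $Da(y)=D_yb(x^*,y)$ follows by differentiating in $y$ the inequality $a(\cdot)\ge b(x^*,\cdot)-a^b(x^*)$, which holds with equality at the interior point $y\in Y$ (this is how the paper argues it); it is not a first-order condition at the maximizing $x^*$, so no ``boundary version'' in the $x$-variable is needed.
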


\begin{proof}
	Assume $a$ is $b^*$-convex, by definition, there exists a function  $a^*: cl(X) \rightarrow \R$, such that for any $y\in Y$, $a(y) = \max_{x\in cl(X)} b(x,y) - a^*(x)$. Therefore, for any $y_0 \in Y$, there exists $x_0 \in cl(X)$, such that $a(y) \ge b(x_0,y) - a^*(x_0)$ for all $y \in Y$, with equality holds at $y = y_0$. This implies, $Da(y_0) = D_y b(x_0, y_0)$. Taking $J=cl(X)$ and applying Proposition \ref{bar{G}^*-Concavity2}, we have concavity of $\mathbf \Pi$, since local $b^*$-convexity of $a$ is automatically satisfied by a $b^*$-convex function $a$. \\
	On the other hand, assuming $\mathbf \Pi$ is concave on $\mathcal U$ and for every $y \in Y$, there exists $x \in cl(X)$ such that $Da(y) = D_y b(x,y)$, Proposition \ref{bar{G}^*-Concavity2} implies local $b^*$-convexity of $a$. Together with Theorem A.1 in \cite{KimMcCann10}, we know $a$ is $b^*$-convex.
\end{proof}

%

\bigskip

\bigskip

%

\end{document}